\newcommand{\R}{\mathbb{R}}
\newtheorem{theorem}{Theorem}[section]
\newtheorem{lemma}[theorem]{Lemma}
\newtheorem{proposition}[theorem]{Proposition}
\newtheorem{corollary}[theorem]{Corollary}
\newtheorem{remark}[theorem]{Remark}
\DeclareMathOperator{\sign}{sign}
\numberwithin{equation}{section}
\title{Qualitative properties of solutions to mixed-diffusion bistable equations}
\author{Denis Bonheure, Juraj F\"{o}ldes \& Alberto Salda\~{n}a\footnote{D\'{e}partement de Math\'{e}matique, Universit\'{e} libre de Bruxelles, Campus de la Plaine CP 213, Bd. du Triomphe, 1050 Bruxelles, Belgium, dbonheure@ulb.ac.be, jfoldes@ulb.ac.be, asaldana@ulb.ac.be.} \footnote{The authors are supported by MIS F.4508.14 (FNRS). The first author is also supported by INRIA~-- Team MEPHYSTO  \& ARC AUWB-2012-12/17-ULB1-IAPAS}}
\date{}
\begin{document}

\maketitle

\begin{abstract}
We consider a fourth-order extension of the Allen-Cahn model with mixed-diffusion and Navier boundary conditions. Using variational and bifurcation methods,  we 
prove results on existence, uniqueness, positivity, stability, a priori estimates, and symmetry of solutions.  As an application, 
we construct a nontrivial bounded saddle solution in the plane.
\end{abstract}
{\footnotesize
\begin{center}
\textit{Keywords:} higher-order equations,  bilaplacian, extended Fisher-Kolmogorov equation
\end{center}
\begin{center}
\textit{2010 Mathematics Subject Classification:} 35J91 (primary),
35G30, 35B06, 35B32, 35B45 (secondary)
\end{center}
}

\section{Introduction}
We study the following fourth-order equation with Navier boundary conditions
\begin{equation}\label{efk:eq}
\begin{aligned}
 \Delta^2 u - \beta \Delta u &= u-u^3 &&\quad \text{ in }\Omega,\\
 u=\Delta u&=0 &&\quad \text{ on }\partial \Omega,
\end{aligned}
\end{equation}
where $\Omega\subset\mathbb R^N,$ $N\geq 1$ is a bounded domain and $\beta>0$.  Such boundary conditions are relevant in many physical contexts \cite{gazzola:2010}
and they permit to rewrite \eqref{efk:eq} as a second order elliptic system with Dirichlet boundary 
conditions. In our best knowledge \eqref{efk:eq} was analyzed only for $N=1$,  see \cite{peletier:2001} and references therein. In this paper we present results on existence, uniqueness, positivity, stability, a priori estimates, regularity, and symmetries of solutions in higher-dimensional domains when $\beta \geq \sqrt{8}$. The case $\beta < \sqrt{8}$ requires different approaches and techniques
and we only prove partial results in this case.

The problem \eqref{efk:eq} is a stationary version of 
\begin{equation}\label{fourth-original}
\partial_{t}u + \gamma \Delta^2 u-\Delta u=u-u^{3}\quad \text{in } \Omega\times [0,\infty), \qquad \gamma>0,
\end{equation}
which was first proposed in 1988 by Dee and van Saarloos \cite{DS} as a higher-order model for physical, chemical, and biological systems. 
The right-hand side of  \eqref{fourth-original} is of bistable type, meaning
it 
has two constant stable states $u \equiv \pm 1$ separated by a third unstable state $u \equiv 0$, see
\cite{DS}.
The distinctive feature of this model is that 
the structure of equilibria is richer compared to its second order counterpart
\begin{equation}\label{FKPP}
\partial_{t}u -\Delta u=u-u^{3}\quad \text{in } \Omega\times[0,\infty),
\end{equation}
giving rise to more complicated patterns and dynamics. 
The equation \eqref{FKPP}
is related to the Fisher-KPP equation (\emph{Fisher-Kolmogorov-Petrovskii-Piscunov} or sometimes simply called \emph{Fisher-Kolmogorov} equation)\footnote{in the original model, the nonlinearity $u^{3}$ is replaced by $u^{2}$}  proposed by Fisher \cite{Fisher} to model the spreading of an advantageous gene and mathematically analyzed by Kolmogorov, Petrovskii, and Piscunov \cite{KPP}.

\medbreak
The equilibria of \eqref{FKPP} satisfy the well-known \emph{Allen-Cahn} or \emph{real Ginzburg-Landau} equation
\begin{equation}\label{second}
-\Delta u=u-u^{3}\quad \text{in } \Omega
\end{equation}
with the associated energy functional
\begin{equation}
\label{2lapfuneps}
\frac{1}{2}\int  |\nabla u|^2  \, dx +\frac{1}{4} \int (|u|^{2}-1)^{2} \, dx,\qquad u\in H^1(\Omega),
\end{equation}
where $H^n(\Omega)=W^{n,2}(\Omega)$ denotes the usual Sobolev space.  This functional is used to describe the pattern and the separation of the (stable) phases $\pm1$ of a material within the van der Waals-Cahn-Hilliard gradient theory of phase separation \cite{CaHi}. For instance, it has important physical applications in the study of interfaces in both gases and solids, e.g. for binary metallic alloys \cite{Allen-Cahn} or bi-phase separation in fluids \cite{Row}. In these models the function $u$ describes the pointwise state of the material or the fluid. The constant equilibria corresponding to the global minimum points $\pm 1$ of the potential $\frac{1}{4}(|u|^{2}-1)^{2}$ are called the pure phases, whereas other configurations $u$ represent mixed states, and orbits connecting $\pm 1$ describe phase transitions. 

To understand the formation of more complex patterns in layering phenomena\textemdash  observed for instance in concentrated soap solutions or metallic alloys\textemdash some nonlinear models for materials include second order derivatives in the energy functional. The basic model can be seen as an extension of \eqref{2lapfuneps}, namely
\begin{equation*}
\int[(\nabla^2u)^2+g(u)\vert\nabla u\vert^2+W(u)]\, dx,\qquad u\in H^2(\Omega),
\end{equation*}
where $\nabla^{2} u$ denotes the Hessian matrix of $u$. It appears  as a simplification of a nonlocal model
\cite{Kawakatsu:1993} analyzed in dimension one in \cite{bonheure:2003,coleman:1992,Leizarowitz:1989,Mizel1998} and in higher dimensions
in \cite{Chermisi:2011,Fonseca:2000,hilhorst:2002}.
In \cite{hilhorst:2002}, the Hessian $\nabla^{2} u$ is replaced by $\Delta u$ as a simplification of the model and it was 
also proposed as model for phase-field theory of edges in anisotropic crystals in \cite{Wheeler:2006}. Finally, we also mention the study of amphiphilic films \cite{leibler:1987} and the description of the phase separations in ternary mixtures containing oil, water, and amphiphile, see \cite{gs}, where the scalar order parameter $u$ is related to the local difference of concentrations of water and oil.

\medbreak

These models motivate the study of the stationary solutions of \eqref{fourth-original}. After scaling, 
equilibria of  \eqref{fourth-original} in the whole space solve
\begin{equation}\label{fourth}
\Delta^2 u-\beta\Delta u=u-u^{3} \quad \text{in } \mathbb R^N.
\end{equation}
We refer to \eqref{fourth} as the \emph{Extended-Fisher-Kolmogorov} equation (EFK) for $\beta>0$ and as the \emph{Swift-Hohenberg} equation for $\beta<0$. This fourth-order model has been mostly investigated for $N=1$, i.e.,
\begin{equation}\label{eq:sta}
u'''' - \beta u'' + u^3-u=0 \quad \text{in } \mathbb R.
\end{equation}
When $\beta\in [\sqrt{8},\infty)$, there is a full classification of 
bounded solutions of (\ref{eq:sta}), which mirrors that of the second order equation. 
Specifically, each bounded solution is either constant, 
a unique kink (up to translations and reflection), or 
a periodic solution indexed by the first integral, whereas there are no pulses. 

For $\beta\in [0,\sqrt{8})$, infinitely many kinks, pulses, and chaotic solutions appear. 
Some solutions can be characterized by homotopy classes, 
but a full classification is not available. The threshold $\sqrt{8}$ is related to a change in stability of constant states $u=\pm 1$. 
The proof of these results rely on purely one-dimensional techniques, for instance, stretching arguments, 
phase space analysis, shooting methods, first integrals, etc.
For more details on the one-dimensional EFK we refer to \cite{BS,peletier:2001} and the references therein.

For $N\geq 2$ let us mention \cite{BH}, where the authors prove an analog of the Gibbon's conjecture and some Liouville-type results. Other related results can be found in \cite{Chermisi:2011,Fonseca:2000,hilhorst:2002}. We also mention that the differential operator from \eqref{fourth} appears in geometry in the study of the \emph{Paneitz-Branson operator}, see for instance \cite{branson,Bakri}, and as a special case of some elliptic systems, see e.g. \cite{sirakov}. In the context of Schr\"odinger equations in nonlinear optics, the fourth order operator in \eqref{fourth} is used to model a mixed dispersion, see e.g. \cite{FiIlPa,BonNasc}.  

\medbreak

To introduce our main results denote
\begin{align}\label{H:def}
H:=H^2(\Omega)\cap H_0^1(\Omega) 
\end{align}
the Sobolev space associated with Navier boundary conditions (see \cite{gazzola:2010} for a survey on Navier and other boundary conditions) and let $J_\beta:H\to\mathbb R$ be the energy functional given by
\begin{align}\label{J0:definition}
J_\beta(u):=\int_\Omega \left(\frac{|\Delta u|^2}{2} + \beta \frac{|\nabla u|^2}{2} + \frac{u^4}{4} - \frac{u^2}{2}\right) \, dx\qquad \text{ for }u\in H.
\end{align}
Any critical point $u$ of $J_\beta$ is a weak solution of \eqref{efk:eq}, that is, $u$ satisfies
\begin{align*}
\int_\Omega \Delta u\Delta v + \beta \nabla u\nabla v + (u^3-u)v \, dx=0 \qquad \text{ for all }v\in H.
\end{align*}
Moreover, we say that $u$ is \emph{stable} if 
\begin{align*}
J_{\beta}''(u)[v,v] = \int_\Omega |\Delta v|^2+\beta |\nabla v|^2 +(3u^2-1)v^2\, dx\geq 0 \quad \text{ for all }v\in H
\end{align*}
and \emph{strictly stable} if the inequality is strict for any $v \not \equiv 0$.

Extracting qualitative information even for global minimizers of \eqref{J0:definition} is far from trivial, since many important tools used in second order problems are no longer available. For example, one cannot use arguments involving the positive part $u^+:=\max\{u,0\}$, absolute value, or rearrangements of functions since they do not belong to $H^2(\Omega)$ in general. 
Furthermore, the validity of maximum principles (or, more generally, positivity preserving properties) is a delicate issue in fourth-order problems and does not hold in general.

For the rest of the paper, $\lambda_1(\Omega)=\lambda_1>0$ denotes the first Dirichlet eigenvalue of $-\Delta$ in $\Omega$ and a \textit{hyperrectangle} is a product of $N$ bounded nonempty open intervals. 

The following is our main existence and uniqueness result, for the second-order counterpart, we refer to \cite{berestycki:1981}.

\begin{theorem}\label{main:theorem}
Let $\beta>0$ and $\Omega\subset \mathbb \R^N$ with $N\geq 1$ be a smooth bounded domain or a hyperrectangle. If 
$\lambda_1^2 + \beta\lambda_1 \geq 1$, then $u\equiv 0$ is the unique weak solution of \eqref{efk:eq}.  If 
\begin{equation}\label{eq:mas}
\lambda_1^2+\beta\lambda_1 < 1 \,,
\end{equation}
then
\begin{enumerate}
 \item[1)] there is $\varepsilon>0$ such that \eqref{efk:eq} admits a positive classical solution for each $\beta\in(\bar\beta-\varepsilon,\bar\beta)$, where $\bar\beta=\frac{1-\lambda_1^2}{\lambda_1}.$
 \item[2)] for each $\beta\geq \frac{\sqrt{8}}{(\sqrt{27}-2)^{1/2}} $ there is a positive classical solution $u$ of \eqref{efk:eq} such that $\|u\|_{L^\infty(\Omega)} \leq \frac{1}{\beta^2}(\frac{4+\beta^2}{3})^{\frac{3}{2}}$ and $\Delta u < \frac{\beta}{2}u$ in $\Omega$.
  \item[3)] for every $\beta\geq \sqrt{8}$ there exists a unique positive solution $u$ of \eqref{efk:eq}. Moreover, this solution is  strictly stable and satisfies $\|u\|_{L^\infty(\Omega)}\leq 1$.
  \end{enumerate}
\end{theorem}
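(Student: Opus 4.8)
The plan is to split the statement into the non-existence part and the three existence/uniqueness assertions, treating them by distinct methods. For the non-existence claim, suppose $u \not\equiv 0$ solves \eqref{efk:eq} weakly; test the equation against $u$ itself to get $\int_\Omega |\Delta u|^2 + \beta|\nabla u|^2 + u^4 - u^2 \, dx = 0$. Since the lowest eigenvalue of $\Delta^2 - \beta\Delta$ on $H$ (realized through the Navier decomposition into two Dirichlet Laplacians) is $\lambda_1^2 + \beta\lambda_1$, we have $\int_\Omega |\Delta u|^2 + \beta|\nabla u|^2 \, dx \ge (\lambda_1^2+\beta\lambda_1)\int_\Omega u^2\,dx$, so the relation forces $(\lambda_1^2+\beta\lambda_1 - 1)\int u^2 + \int u^4 \le 0$, which is impossible when $\lambda_1^2 + \beta\lambda_1 \ge 1$ unless $u \equiv 0$. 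This is routine.

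For part 1), I would set up a local bifurcation analysis at $\beta = \bar\beta$, where $\bar\beta$ is exactly the value making $\lambda_1^2 + \bar\beta\lambda_1 = 1$, i.e.\ the value at which the trivial branch $u\equiv 0$ loses its status as the unique solution. Writing the problem as $u = (\Delta^2 - \beta\Delta)^{-1}(u - u^3)$ — a compact operator equation on $H$ thanks to elliptic regularity and Rellich — the linearization at $(0,\bar\beta)$ has a one-dimensional kernel spanned by $\varphi_1 > 0$, the first Dirichlet eigenfunction. By the Crandall–Rabinowitz theorem a branch of nontrivial solutions bifurcates; since $\varphi_1 > 0$, solutions near the bifurcation point inherit positivity (in the interior, with the right sign of the bifurcation parameter, determined by the cubic nonlinearity pushing the branch toward $\beta < \bar\beta$). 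Classical regularity then follows by bootstrapping in the elliptic system $-\Delta u = v$, $-\Delta v + \beta v = u - u^3$. The direction of bifurcation — ensuring the branch exists for $\beta$ slightly \emph{below} $\bar\beta$ rather than above — is the one genuinely computational point, handled by a standard Lyapunov–Schmidt expansion.

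For parts 2) and 3) I would work variationally with $J_\beta$ on $H$. The condition \eqref{eq:mas} ensures that $u\equiv 0$ is not a local minimum (the quadratic form $J_\beta''(0)$ is negative in the $\varphi_1$ direction), while the quartic term makes $J_\beta$ coercive and weakly lower semicontinuous on $H$, so a nontrivial global minimizer $u$ exists. To obtain positivity without the forbidden truncation arguments, I expect the key device to be the second-order \emph{system} formulation with Dirichlet data and a comparison/dual-cone argument: write $u = Kw$ where $K = (-\Delta)^{-1}$ on $\Omega$, so that $w = -\Delta u$ and the pair solves a cooperative-type system once one controls the sign of $w$; the hypothesis $\beta \ge \sqrt 8$ (resp.\ the explicit threshold in 2)) is precisely what makes the relevant operator positivity-preserving, which is why the $L^\infty$ bound $\|u\|_\infty \le 1$ and the pointwise inequality $\Delta u < \frac{\beta}{2}u$ come out — these are barrier/maximum-principle estimates for the system, using that $u - u^3$ and the nonlinearity's shape confine the minimizer. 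Strict stability in 3) should follow because the minimizer is the ground state and the quadratic form $J_\beta''(u)$ controls $3u^2 - 1 > -1$ together with the spectral gap; uniqueness is then obtained by a convexity-type or sliding argument on the set of positive solutions, exploiting that $t \mapsto (t - t^3)/t$ is strictly decreasing on $(0,1)$ so that two positive solutions must coincide by testing the difference of the equations against an appropriate combination and invoking the positivity-preserving property again.

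The main obstacle throughout is the absence of maximum principles and truncation in $H^2$: every step where a second-order argument would simply insert $u^+$ or compare $u$ with a supersolution must instead be routed through the $\{-\Delta u = w\}$ system and the positivity-preserving property of $(\Delta^2 - \beta\Delta)^{-1}$ under Navier conditions, and pinning down the exact constants $\sqrt 8$ and $\tfrac{\sqrt 8}{(\sqrt{27}-2)^{1/2}}$ at which these properties kick in — together with propagating them into the sharp bounds $\|u\|_\infty \le 1$ and $\|u\|_\infty \le \beta^{-2}((4+\beta^2)/3)^{3/2}$ — is where the real work lies.
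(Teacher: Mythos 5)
Your treatment of the non-existence claim and of part 1) is correct and matches the paper's strategy: the non-existence proof is exactly the Poincar\'e-inequality test in Theorem~\ref{existence:positive:solution}, and part 1) is a Crandall--Rabinowitz bifurcation from the simple eigenvalue at $\bar\beta$, as in Theorem~\ref{bifurcation:beta}, with the cubic computation fixing the subcritical direction. Your identification of $\bar\beta$ and of the obstruction from the lack of $H^2$-truncation are also on target.

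The gaps are concentrated in parts 2) and 3). For part 2), the bound $\|u\|_\infty\le M_\beta=\frac{1}{\beta^2}\bigl(\frac{4+\beta^2}{3}\bigr)^{3/2}$ and the positivity do not come from a dual-cone or positivity-preserving-resolvent argument. The paper (Lemma~\ref{bounds:lemma}) works directly with the scalar $w=-\Delta u+\frac{\beta}{2}u$ at the extremal points of $u$ and $w$, reducing to a one-dimensional fixed-point inequality for $g(s)=\frac{4}{\beta^2}f(s)+s$; the threshold $K_0=\sqrt{8/(\sqrt{27}-2)}$ is precisely the value at which $M_\beta\le C_\beta$, where $C_\beta$ is the positive root of $g$. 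Crucially, one does \emph{not} minimize $J_\beta$ itself: the paper minimizes an auxiliary functional with a \emph{truncated} nonlinearity $f$ (set to $-\frac{\beta^2}{4}s$ for $s<0$ and frozen past $C_\beta$), then uses Lemma~\ref{bounds:lemma} to show the minimizer lands inside $[0,M_\beta]$ where $f(u)=u-u^3$, so it solves the original problem. Saying ``barrier/maximum-principle estimates for the system'' without this construction does not explain how the precise constants $M_\beta$ and $K_0$ arise, nor how one escapes the fact that $u-u^3$ changes sign.

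For part 3), your stability sketch has the right shape — if the principal eigenvalue of $\Delta^2-\beta\Delta+(u^2-1)$ is zero, then adding $2u^2>0$ makes $J_\beta''(u)$ strictly positive — but the sentence ``$3u^2-1>-1$ together with the spectral gap'' does not supply the missing fact. The genuinely hard step is Proposition~\ref{first:eigenvalue:is:zero}: that for \emph{any} positive solution $u$ (not only a global minimizer), the principal eigenvalue $\mu_1$ of the linearization with potential $u^2-1$ equals zero, because $u$ itself is a positive eigenfunction. Establishing that a fourth-order Navier problem \emph{has} a positive principal eigenfunction requires a cooperative-systems maximum-principle theorem (Sweers/Mitidieri), and the hypothesis $\beta\ge\sqrt8$ enters exactly to make $-M$ essentially positive (since then $\|u\|_\infty\le 1$, so $u^2-1\le 0$). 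Your proposal does not mention this. Finally, the uniqueness you gesture at (``testing the difference, sliding, convexity'') is not what the paper does in the main proof — there uniqueness comes from a global continuation of the bifurcation branch using the strict stability (so no turning points and no secondary bifurcations) plus the non-existence result at $\bar\beta$. The short testing-the-difference argument you hint at does appear as Remark~\ref{tobias}, but it still rests on Proposition~\ref{first:eigenvalue:is:zero}, so it does not circumvent the Sweers/Mitidieri input that your sketch omits.
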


The smoothness assumptions on $\Omega$ are needed for higher-order elliptic regularity results. We single out hyperrectangles 
to use them in the construction of saddle solutions and patterns.
Indeed, by reflexion,  positive solutions of \eqref{efk:eq} in regular polygons that tile the plane give rise to periodic planar patterns.

The quantities involved in Theorem \ref{main:theorem} 2) are of a technical nature. Observe that \eqref{eq:mas} holds for all big enough domains.  As mentioned above, the threshold $\sqrt{8}$ is related to a change in the stability of constant states $u=\pm 1$. For $\beta \geq \sqrt{8}$ the states are saddle-node type whereas for $\beta < \sqrt{8}$ they are saddle-focus type. Hence in the latter case due to oscillations one can not expect $u$ being bounded by $1$. Such oscillations around one can be proved for radial global minimizers arguing as in \cite[proof of Theorem 6]{Bonheure:2004}. 
Intuitively, for $\beta\geq \sqrt{8}$ the Laplacian is the leading term and the equation inherits the dynamics of the second order Allen-Cahn equation, while for $\beta\in(0,\sqrt{8})$ the bilaplacian increases its influence resulting in a much richer and complex set of solutions.
We present numerical approximations\footnote{Computed with FreeFem++ \cite{hecht} and Mathematica 10.0, Wolfram Research Inc., 2014.} of positive solutions using minimization techniques in Figure \ref{figure1} below.
 
\begin{figure}[h!]
  \centering
\includegraphics[height=5cm]{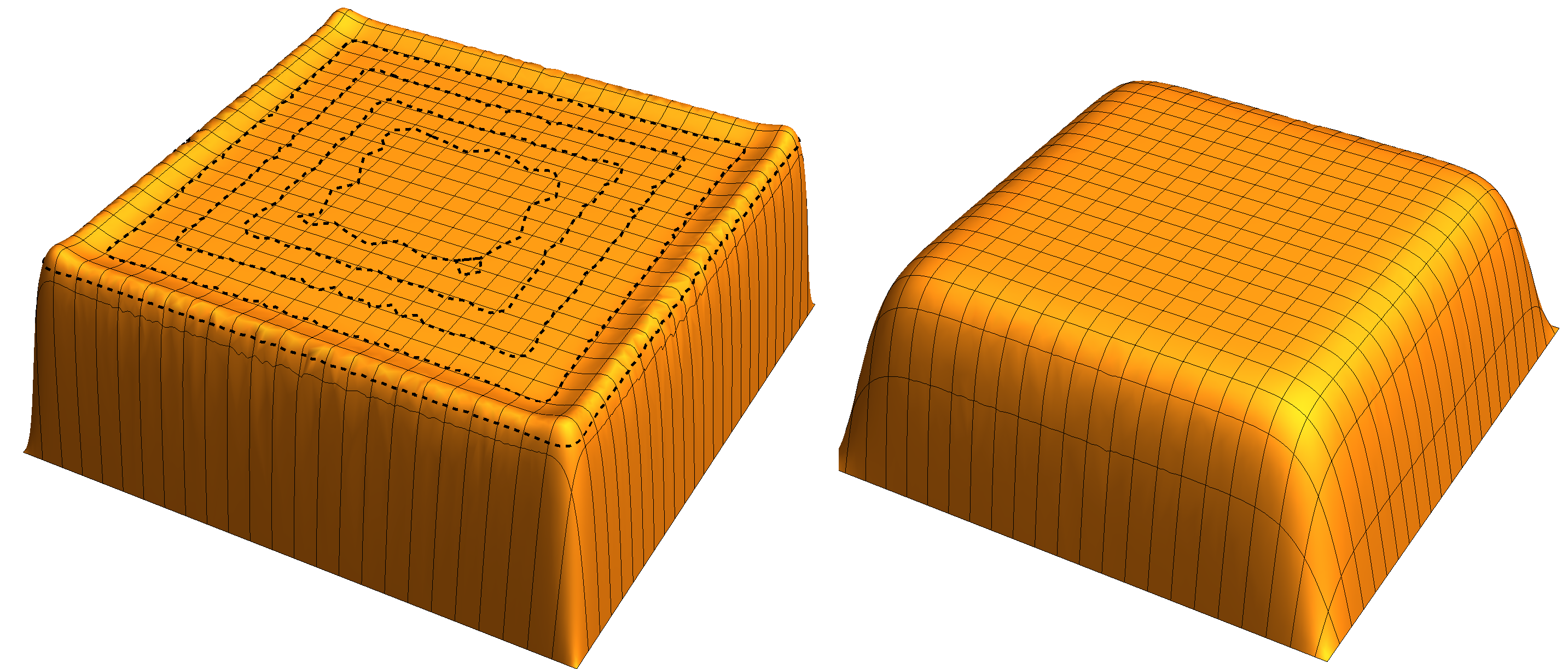}
  \caption{\footnotesize{Numerical approximation of the global minimizer of \eqref{J0:definition} for $\Omega=[0,50]^2$ with $\beta=0.1$ (left) and $\beta=4$ (right). The dotted lines represent the level set $\{u=1\}$.}}
  \label{figure1}
\end{figure}

Note that Theorem \ref{main:theorem} 1) holds for any $\beta>0$, but only for appropriate values of $\lambda_1$. 

Theorem \ref{main:theorem} follows directly from Theorem \ref{existence:positive:solution} and Theorem \ref{bifurcation:beta}. The proof is based on variational and bifurcation techniques. For the variational part (Theorem \ref{existence:positive:solution}), we minimize an auxiliary problem for which we can guarantee the sign and $L^\infty$ bounds of global minimizers. Next, we prove that global minimizers of the auxiliary problem are solutions to \eqref{J0:definition}. The uniqueness is proved using stability, maximum principles, and bifurcation from a simple eigenvalue (Theorem \ref{bifurcation:beta}). After the paper was accepted Tobias Weth suggested us an alternative proof for uniqueness, see Remark \ref{tobias}.

We depict a numerical approximation\footnote{Computed with AUTO-07P \cite{auto}.}
of  the bifurcation branch in Figure \ref{figure2}.
This branch can be  continued even for $\beta < 0$, see Section \ref{numerics} for an example of such a branch and we refer again to \cite{peletier:2001} for a survey on \eqref{eq:sta} for $\beta<0$.  See also Remark \ref{bif:pts} for a discussion on the explicit values of the bifurcation points.  

\begin{figure}[h!]
  \centering
\includegraphics[height=4.5cm]{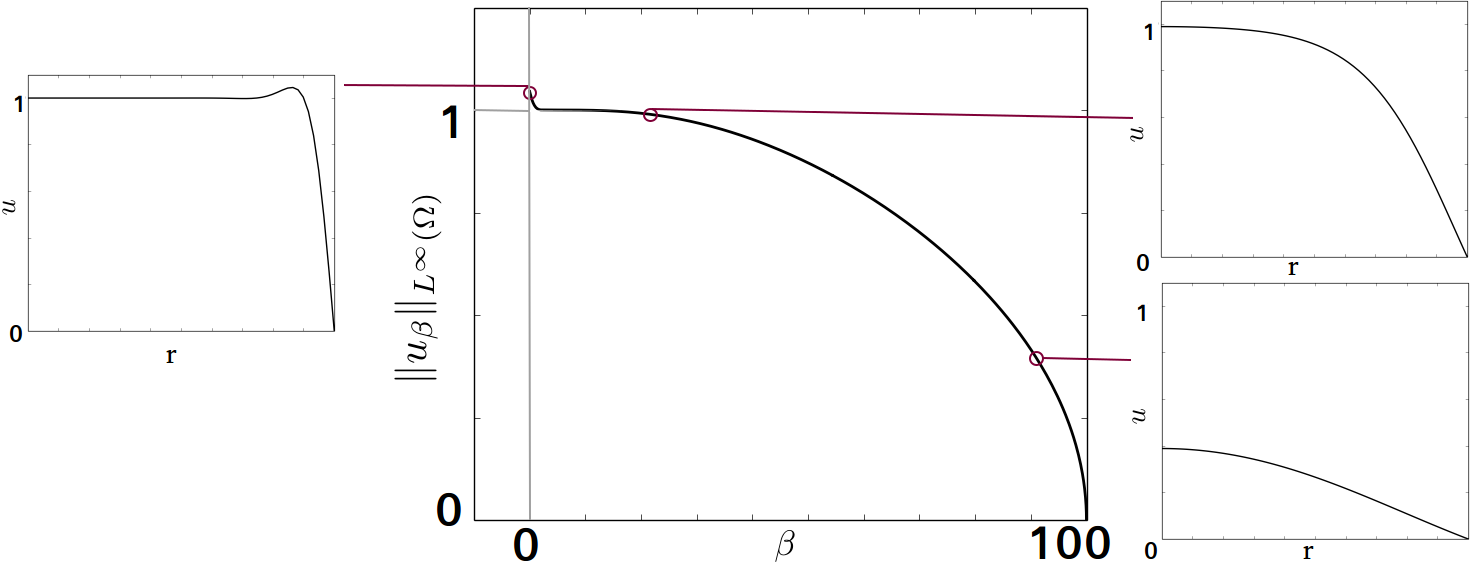}
  \caption{\footnotesize{Numerical approximation of the bifurcation branch and some radial solutions. Here $\Omega$ is a ball in $\mathbb R^2$ of radius $240.483$. 
  }}
  \label{figure2}
\end{figure}

We now use the solution given by Theorem \ref{main:theorem} to construct a saddle solution for \eqref{efk:eq}. We call $u$ a \emph{saddle solution} if $u\not\equiv 0$ and $u(x,y)xy\geq 0$ for all $(x,y)\in\mathbb R^2$. See Figure \ref{saddlefigure} below.

\begin{theorem}\label{saddle:solution:theorem}
For $\beta\geq \sqrt{\frac{8}{\sqrt{27}-2}}$ the problem $\Delta^2 u - \beta \Delta u = u - u^3$ in $\mathbb R^2$ has a saddle solution.
\end{theorem}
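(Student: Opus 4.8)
The plan is to obtain the saddle solution as a limit, over expanding squares, of the positive solutions furnished by Theorem~\ref{main:theorem}, extended to the whole plane by odd reflection across the coordinate axes. Set $\Omega_R:=(0,R)^2$; this is a hyperrectangle with first Dirichlet eigenvalue $\lambda_1(\Omega_R)=2\pi^2/R^2\to 0$, so $\lambda_1(\Omega_R)^2+\beta\lambda_1(\Omega_R)<1$ as soon as $R\ge R_0$ for some $R_0=R_0(\beta)$. Since $\beta\ge\sqrt{8/(\sqrt{27}-2)}=\tfrac{\sqrt 8}{(\sqrt{27}-2)^{1/2}}$, part~2) of Theorem~\ref{main:theorem} gives, for every $R\ge R_0$, a positive classical solution $u_R$ of \eqref{efk:eq} on $\Omega_R$ with $\|u_R\|_{L^\infty(\Omega_R)}\le M:=\beta^{-2}\big(\tfrac{4+\beta^2}{3}\big)^{3/2}$. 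I would then extend $u_R$ to $Q_R:=(-R,R)^2$ by $\tilde u_R(x,y):=\sign(x)\sign(y)\,u_R(|x|,|y|)$, i.e.\ by successive odd reflections across $\{x=0\}$ and $\{y=0\}$. The Navier conditions $u_R=\Delta u_R=0$ on $\partial\Omega_R$ are exactly what makes this admissible: $\tilde u_R\in H^2_{\mathrm{loc}}(Q_R)$, and testing the equation on $Q_R$ against $\psi\in C^\infty_c(Q_R)$ while decomposing $\psi$ into its four parity components shows that only the component odd in both variables contributes, and that the resulting identity reduces, using the oddness of $\tilde u_R$ and the fact that the restriction to $\Omega_R$ of an odd--odd, compactly supported function lies in $H$, to the weak formulation satisfied by $u_R$. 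Hence $\tilde u_R$ is a weak, and by interior elliptic regularity for the uniformly elliptic operator $\Delta^2-\beta\Delta$ a classical, solution of $\Delta^2u-\beta\Delta u=u-u^3$ in $Q_R$, with $\|\tilde u_R\|_{L^\infty(Q_R)}\le M$ and $\tilde u_R(x,y)\,xy\ge 0$ in $Q_R$.

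Next I would pass to the limit. For fixed $\rho>0$ and $R$ large we have $\overline{B_{2\rho}}\subset Q_R$ and $|\tilde u_R-\tilde u_R^3|\le M+M^3$ there, so interior $L^p$ estimates followed by Schauder estimates bound $\tilde u_R$ in $C^{4,\alpha}(\overline{B_\rho})$ uniformly in $R$. A diagonal extraction over $\rho\in\mathbb N$ along a sequence $R_n\to\infty$ produces $u\in C^4_{\mathrm{loc}}(\mathbb R^2)$ with $\tilde u_{R_n}\to u$ in $C^4_{\mathrm{loc}}(\mathbb R^2)$; then $u$ is a classical solution of $\Delta^2u-\beta\Delta u=u-u^3$ in $\mathbb R^2$ with $\|u\|_{L^\infty(\mathbb R^2)}\le M$ and $u(x,y)\,xy\ge 0$ for all $(x,y)\in\mathbb R^2$.

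The crux is to rule out $u\equiv 0$. Testing the weak formulation for $u_R$ against the positive first eigenfunction $\phi_R(x,y)=\sin(\pi x/R)\sin(\pi y/R)$ of $\Delta^2-\beta\Delta$ with Navier conditions on $\Omega_R$ (associated eigenvalue $\mu_R:=\lambda_1(\Omega_R)^2+\beta\lambda_1(\Omega_R)$) gives $\int_{\Omega_R}u_R^3\phi_R=(1-\mu_R)\int_{\Omega_R}u_R\phi_R$, whence, since $u_R,\phi_R>0$, the uniform lower bound $\|u_R\|_{L^\infty(\Omega_R)}^2\ge 1-\mu_R\ge 1-\mu_{R_0}=:c_0>0$ for all $R\ge R_0$. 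This alone is insufficient, because the maximum of $u_R$ sits near the centre of $\Omega_R$, which escapes to infinity under the reflection, so a priori the local limit $u$ could still vanish near the origin. To anchor the non-degeneracy at a fixed location I would establish the monotonicity $u_{R_1}\le u_{R_2}$ on $\Omega_{R_1}$ for $R_0\le R_1\le R_2$, working with the equivalent second-order Dirichlet system $-\Delta u=v$, $-\Delta v+\beta v=u-u^3$ (with $u=v=0$ on the boundary) and the positivity-preserving/comparison tools developed earlier in the paper: the extension by zero of $(u_{R_1},-\Delta u_{R_1})$ should be a subsolution of this system on $\Omega_{R_2}$, using Hopf-type boundary information. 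Monotonicity then yields a pointwise limit $u_\infty:=\lim_{R\to\infty}u_R$ on the quadrant $(0,\infty)^2$ with $u_\infty\ge u_{R_0}>0$ on $\mathrm{int}\,\Omega_{R_0}$; since the odd reflection of $u_\infty$ is $u$, we conclude $u\not\equiv 0$, and $u$ is then the desired saddle solution. If the sign of $-\Delta u_R$ near $\partial\Omega_R$ obstructs this comparison, an alternative is a compactness argument: show $u_R\to 1$ uniformly on compact subsets of $\mathrm{int}\,\Omega_{R_n}$ via rescaling and the variational characterisation of $u_R$, and bound $u_R$ from below on a fixed box adjacent to a face by analysing the one-dimensional transition layer along that face.

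I expect the main obstacle to be precisely this last point, namely converting the uniform bound $\|u_R\|_{L^\infty}^2\ge c_0$ into a positive lower bound at a fixed location near the corner; this is where the scarcity of maximum principles for fourth-order operators, emphasised throughout the paper, makes itself felt. Everything else is standard: reflection compatible with Navier conditions, interior elliptic regularity, and a compactness/diagonal argument.
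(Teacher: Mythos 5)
Your reflection argument, the uniform $C^{4,\alpha}$ bounds from Lemma~\ref{l:reg}, and the diagonal extraction all match the paper's proof, and the eigenfunction test giving $\|u_R\|_{L^\infty}^2\ge 1-\mu_R$ is a clean observation. But the crux, as you correctly say, is anchoring non-degeneracy at a fixed point, and your proposal does not actually close that gap.

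The monotonicity scheme $u_{R_1}\le u_{R_2}$ is unlikely to work with the decomposition $(u,v)=(u,-\Delta u)$: while $\partial_\nu u_{R_1}<0$ on $\partial\Omega_{R_1}$ is fine, you would also need a sign for $v_{R_1}=-\Delta u_{R_1}$ near $\partial\Omega_{R_1}$ (to make the zero extension of the \emph{pair} a subsolution of the cooperative system), and the a priori information you have is only $-\Delta u+\tfrac{\beta}{2}u>0$, which does not control the sign of $-\Delta u$ itself. This is precisely the fourth-order obstruction you flag in your last paragraph, and it is not bypassed. Your fallback --- $u_R\to1$ locally uniformly in the interior via ``rescaling and variational characterisation,'' plus a one-dimensional transition-layer analysis near a face --- is a plausible heuristic but is left entirely at the level of a plan; no mechanism is given that would convert the global bound $\|u_R\|_{L^\infty}\ge\sqrt{1-\mu_R}$ into a lower bound on a fixed box near the corner. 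The paper resolves this with a quantitative energy-comparison argument that you do not describe: it assumes for contradiction that $\|u_R\|_{L^\infty([0,r+2]^2)}\le 1/\sqrt2$ for a suitably large but fixed $r$, builds an explicit competitor $\psi$ that equals $1$ on the bulk of $[0,r+1]^2$, equals $u_R$ outside $[0,r+2]^2$, and interpolates in between via cutoffs with $R$-independent $C^2$ bounds, and then shows $J_\beta(\psi)<J_\beta(u_R)$ because the interpolation cost is $O(r)$ while the gain from replacing a near-zero region by $1$ is of order $r^2$. This uses global minimality of $u_R$ (coming from Theorem~\ref{existence:positive:solution}), which you never invoke. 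Without some argument of this type, the extracted limit could indeed be $u\equiv 0$, so your proposal has a genuine gap at the decisive step.
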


\begin{figure}[h!]
  \centering
\includegraphics[height=3.5cm]{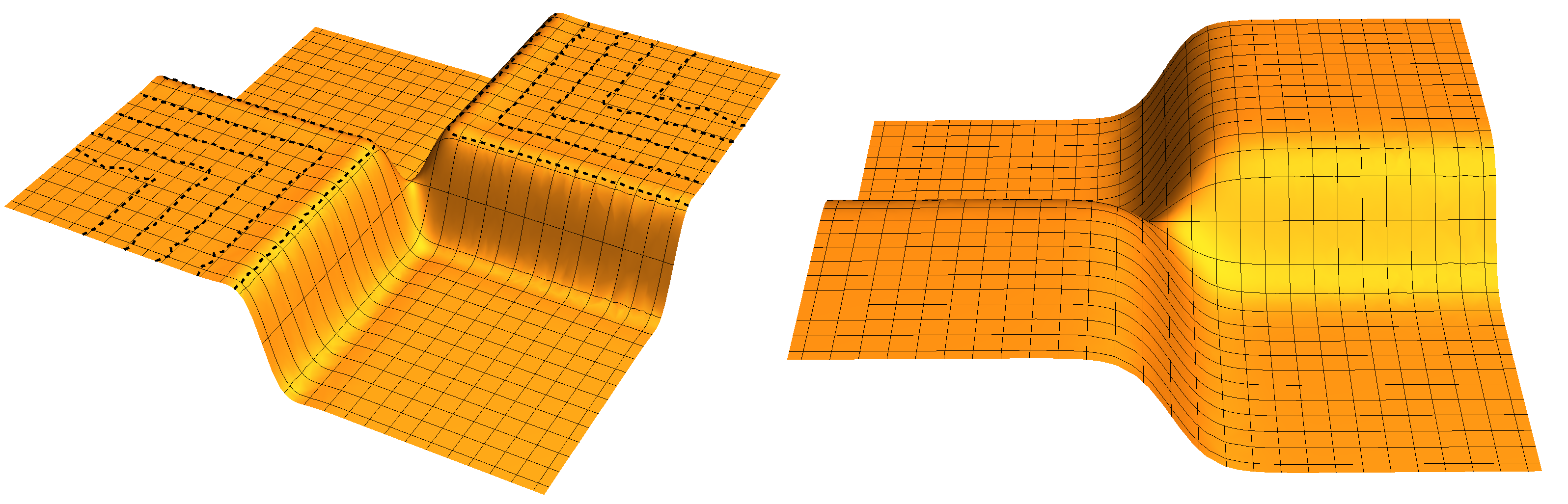}
  \caption{\footnotesize{Saddle solutions for $\beta<\sqrt{8}$ (left) and $\beta\geq\sqrt{8}$ (right). The dotted lines are the level set $\{u=1\}$.}}
  \label{saddlefigure}
\end{figure}

We refer to \cite{cabre:2012,dang:1992} for more information on saddle solutions for second order bistable equations. 

In the following we explore properties of positive classical solutions with a special focus on stability and symmetry properties. 
%
%
%
 
\begin{theorem}\label{theorem:radial:minimizers}
 Let $\Omega$ be a ball or an annulus and let $u$ be a stable solution of \eqref{efk:eq} with $\beta> \sqrt{12}-2\lambda_{1}$ 
 such that $\|u\|_{L^\infty(\Omega)}\leq 1.$  Then $u$ is a radial function. 
\end{theorem}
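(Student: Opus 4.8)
The plan is to reduce the fourth-order equation to a cooperative second-order system and then apply a moving-plane / symmetrization argument adapted to stable solutions. Write $v := -\Delta u + \tfrac{\beta}{2} u$, so that the factorization $\Delta^2 - \beta \Delta = (-\Delta + \tfrac{\beta}{2})(-\Delta - \tfrac{\beta}{2}) + \tfrac{\beta^2}{4}$ is \emph{not} quite what we want; instead I would use the standard device of completing the operator differently. Concretely, seek $a>0$ with $(-\Delta+a)(-\Delta+b)u = \Delta^2 u - (a+b)\Delta u + ab\,u$, matching $a+b=\beta$ and absorbing the $ab\,u$ term into the right-hand side; equivalently set $v=-\Delta u + a u$ so that $-\Delta v + b v = u-u^3 + ab\,u = f(u)$ with $b=\beta-a$. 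Choosing $a=b=\beta/2$ gives $v=-\Delta u+\tfrac\beta2 u$ and $-\Delta v + \tfrac\beta2 v = u - u^3 + \tfrac{\beta^2}{4}u$. The pair $(u,v)$ then solves a second-order system on $\Omega$ with Dirichlet data $u=0$ on $\partial\Omega$ and — because $\Delta u=0$ there — also $v=0$ on $\partial\Omega$. The system is cooperative provided the coupling has the right sign, which is where $\|u\|_{L^\infty}\le 1$ enters: the term $u-u^3+\tfrac{\beta^2}{4}u$ as a function of $u$ must be handled, but the coupling between $u$ and $v$ in the two equations ($-\Delta u = v - \tfrac\beta2 u$ and $-\Delta v = -\tfrac\beta2 v + f(u)$) is cooperative since $\partial_v(v-\tfrac\beta2 u)=1>0$; cooperativity in the other direction needs $f'(u)\ge$ something, controlled on $\|u\|_\infty\le1$.

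Next I would invoke a symmetry result for stable solutions of cooperative elliptic systems on balls and annuli. The cleanest route: since $u$ is a \emph{stable} solution, the linearized operator $J_\beta''(u)$ is nonnegative, and one shows the first eigenfunction of the linearization can be taken to have a sign; combined with the strict stability / simple-eigenvalue structure this forces the solution itself to inherit the symmetry of the domain and the equation via a standard argument (as in Gidas–Ni–Nirenberg-type results for systems, or via the "stable solutions are symmetric" philosophy). Alternatively, and I think this is the approach the paper takes, one uses \emph{polarization / two-point rearrangement}: for any hyperplane $T$ through the center, compare $u$ with its polarization $u^T$; both are in $H$ (polarization preserves $H^2\cap H_0^1$ on a ball/annulus because it is an isometry on Sobolev spaces and respects the boundary), and using the stability inequality plus the fact that $J_\beta(u)=J_\beta(u^T)$ (polarization preserves each term of the functional, including $\int|\Delta u|^2$, since it is a rearrangement by reflection) one concludes $u=u^T$ or $u=u_T$ (reflection). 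Running this over all $T$ gives radial symmetry.

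The condition $\beta>\sqrt{12}-2\lambda_1$ should enter precisely to guarantee that the relevant linear operator controlling the rearrangement comparison is coercive / that a certain quadratic form is positive, i.e., it is the quantitative threshold making the "if $u\ne u^T$ then we can strictly decrease the energy, contradicting that $u$ is stable (hence a local minimizer among polarizations)" argument work. I would compute: the second variation tested against $w:=u-u^T$ (which vanishes on $T\cap\Omega$ and on $\partial\Omega$, so lies in an appropriate subspace) gives $\int |\Delta w|^2 + \beta|\nabla w|^2 + (3\xi^2-1)w^2$ for some intermediate $\xi$ with $|\xi|\le 1$, so $3\xi^2-1 \ge -1$; bounding below by $\int|\Delta w|^2 + \beta|\nabla w|^2 - w^2$ and using the Dirichlet Poincaré-type inequalities $\int|\Delta w|^2 \ge \lambda_1 \int|\nabla w|^2$ and $\int|\nabla w|^2\ge \lambda_1\int w^2$ (valid on the nodal-type domain, whose first eigenvalue is $\ge\lambda_1(\Omega)$ by domain monotonicity) yields positivity when $\lambda_1^2+\beta\lambda_1 - 1 > 0$ in a suitable form; the stated bound $\beta > \sqrt{12}-2\lambda_1$ is what comes out after optimizing the use of these inequalities (note $\lambda_1^2+2\lambda_1\beta+\beta^2 > 12$ rearranges to $(\lambda_1+\beta)^2>12$, i.e. $\beta>\sqrt{12}-\lambda_1$ — I would double-check the exact constant, the factor may come from splitting $3\xi^2-1$ more carefully or from a different test function).

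The main obstacle, and the step I expect to require the most care, is justifying that polarization (or reflection-based rearrangement) keeps us inside $H = H^2\cap H_0^1$ and does not increase $\int|\Delta u|^2$ — equivalently, that the second-order term in $J_\beta$ is still controlled after rearranging. Polarization across a hyperplane is an isometry of $L^2$ and commutes with reflections, so $\int|\Delta u^T|^2 = \int|\Delta u|^2$ exactly (it is not an inequality but an equality, which is even better), but one must be careful that $u^T$ is genuinely in $H^2$: polarization can create a gradient jump across $T$ unless the values match there, which is not automatic. The resolution is the classical one: either work with the strict version (strict stability, strict inequality in the energy comparison forces the match), or first establish via the maximum-principle structure of the cooperative system that $u>0$ in $\Omega$ has the right monotonicity so that polarization is actually smooth. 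I would set this up so that the conclusion is: if $u \ne u^T$, then $J_\beta(u^T) = J_\beta(u)$ but $u^T$ is not a critical point (its "corner" on $T$ violates the equation), contradicting that critical points with the energy value of a stable solution must be smooth solutions — hence $u=u^T$ for every $T$ through the center, giving radial symmetry.
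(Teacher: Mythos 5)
Your main proposed route (polarization / two-point rearrangement) does not work here, and indeed the paper explicitly warns against it in the introduction: rearrangement-type operations (positive part, absolute value, polarizations) generically leave $H^2(\Omega)$, because they create a gradient kink across the reflecting hyperplane. You acknowledge this issue and then wave it away, but there is no "classical resolution" — this is precisely the structural obstruction that makes first-order rearrangement machinery unavailable for the bilaplacian. The claimed identity $\int|\Delta u^T|^2 = \int|\Delta u|^2$ is vacuous if $u^T\notin H^2$, and "$u^T$ has a corner so it cannot be a critical point, contradiction" is not an argument: nothing in the stability hypothesis rules out the existence of such a corner function at the same energy level. Your derivation of the threshold $\beta > \sqrt{12}-2\lambda_1$ is also not correct (you land on $\beta>\sqrt{12}-\lambda_1$ and then concede you would "double-check the exact constant"), which signals that the quantitative mechanism has not been identified.

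The paper's actual argument is quite different and is the one you gesture at in your "alternatively" sentence but do not execute. Write the linearization as a cooperative $2\times 2$ system $L+\widetilde Q$ with a rank-one right-hand matrix $B$, and invoke the Sweers--Mitidieri principal-eigenvalue theorem (Theorem~\ref{sweers:thms}): under the fully-coupled, essentially positive structure \emph{and} the existence of a positive strict supersolution, the generalized eigenvalue problem $(L+\widetilde Q)(v,w)^T=\mu B(v,w)^T$ has a simple smallest eigenvalue $\mu$ with positive eigenfunction. The threshold $\beta>\sqrt{12}-2\lambda_1$ enters exactly to build the strict supersolution $(\delta\varphi_1,\varphi_1)$: one needs a $\delta$ with $1/(\lambda_1+\beta/2)<\delta<(\lambda_1+\beta/2)/3$, and this interval is nonempty iff $(\lambda_1+\beta/2)^2>3$. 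Stability of $u$ gives $\mu\ge 0$. The decisive step you are missing entirely is the \emph{angular-derivative trick}: on a ball or annulus $\partial_\theta$ commutes with $\Delta$, so $u_\theta$ solves the linearized equation $\Delta^2 u_\theta - \beta\Delta u_\theta + (3u^2-1)u_\theta=0$; if $u$ is not radial then $u_\theta\not\equiv 0$ is a sign-changing eigenfunction at eigenvalue $0$, contradicting the simplicity of the nonnegative first eigenvalue. You should replace the polarization machinery with this argument.
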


Note that Theorem \ref{theorem:radial:minimizers} does not assume positivity of solutions. 
We believe that the restriction on $\beta$ is of a technical nature, but it is needed in our approach, see also Remark \ref{beta:res}.

More generally, for reflectionally symmetric domains  we have the following. We say that a domain is \emph{convex and symmetric in the $e_1-$direction} if for every $x=(x_1,x_2,\ldots,x_N)\in \Omega$ we have $\{(tx_1,x_2,\ldots,x_N)\::\: t\in[-1,1]\}\subset\Omega.$

\begin{proposition}\label{symmetry:theorem}
Let $\beta\geq \sqrt{8}$ and let $\Omega\subset \R^N$ be a hyperrectangle or a bounded smooth domain which is convex and symmetric in the $e_1-$direction.  Then, any positive solution of \eqref{efk:eq} satisfies  
 \begin{align*}
u(x_1,x_2,\ldots,x_N)&=u(-x_1,x_2,\ldots,x_N)\qquad \text{ for all }x=(x_1,\ldots,x_N)\in\Omega,\\
\partial_{x_1}u(x)&<0 \qquad \text{ for all }x\in \Omega \text{ such that }x_1>0.
  \end{align*}
\end{proposition}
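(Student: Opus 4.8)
The plan is to recast \eqref{efk:eq} as a \emph{cooperative} second-order elliptic system and then run the moving plane method on it. Since $\beta\ge\sqrt8$, Theorem \ref{main:theorem}(3) tells us that the positive solution $u$ is unique, is strictly stable, and — this is essential — satisfies $0<u\le1$ in $\Omega$ and $u=\Delta u=0$ on $\partial\Omega$. Set $w:=-\Delta u$; then $-\Delta w+\beta w=u-u^3\ge0$ (because $0\le u\le1$) with $w=0$ on $\partial\Omega$, so $w>0$ in $\Omega$ by the maximum principle. Introduce the decoupling variable
\[
 z:=w+\tfrac{\beta}{2}\,u=-\Delta u+\tfrac{\beta}{2}\,u>0 \quad\text{in }\Omega,
\]
and a direct computation shows that $(u,z)$ solves
\[
 -\Delta u+\tfrac\beta2 u=z,\qquad
 -\Delta z+\tfrac\beta2 z=g(u):=\Big(1+\tfrac{\beta^2}{4}\Big)u-u^3,\qquad
 u=z=0 \text{ on }\partial\Omega.
\]
The two coupling coefficients are $1>0$ and $g'(u)=1+\tfrac{\beta^2}{4}-3u^2$; since $0\le u\le1$ we get $g'(u)\ge\tfrac{\beta^2}{4}-2\ge0$ \emph{precisely because} $\beta\ge\sqrt8$. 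Thus we have a weakly coupled cooperative system with nonnegative zeroth-order coefficients, homogeneous Dirichlet data, and both components positive in $\Omega$; the threshold $\sqrt8$ is exactly what makes this system cooperative.

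Next I run the moving plane method in the $e_1$-direction. Write $x=(x_1,x')$ with $x'=(x_2,\dots,x_N)$, $R:=\sup\{x_1:x\in\Omega\}$, and for $\lambda\in(0,R)$ set $\Sigma_\lambda:=\Omega\cap\{x_1>\lambda\}$, $x^\lambda:=(2\lambda-x_1,x')$; symmetry and convexity of the $e_1$-slices guarantee $x^\lambda\in\Omega$ for $x\in\Sigma_\lambda$, so $u_\lambda(x):=u(x^\lambda)$ and $z_\lambda(x):=z(x^\lambda)$ are defined on $\Sigma_\lambda$. The differences $U:=u_\lambda-u$, $Z:=z_\lambda-z$ satisfy $-\Delta U+\tfrac\beta2 U=Z$ and $-\Delta Z+\tfrac\beta2 Z=c(x)U$ in $\Sigma_\lambda$, with $c(x)=\bigl(g(u_\lambda)-g(u)\bigr)/(u_\lambda-u)\in[0,1+\tfrac{\beta^2}{4}]$ (again by $\beta\ge\sqrt8$ and $0\le u\le1$), and $U\ge0$, $Z\ge0$ on $\partial\Sigma_\lambda$ (they vanish on $\{x_1=\lambda\}$ and equal $u_\lambda,z_\lambda\ge0$ on $\partial\Omega$). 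For $\lambda$ close to $R$ the cap $\Sigma_\lambda$ has small measure; testing the two equations against the negative parts $U^-,Z^-$ (which lie in $H^1_0(\Sigma_\lambda)$ and are $\le0$), using $\int_{\Sigma_\lambda}Z^+U^-\le0$ and $\int_{\Sigma_\lambda}c\,U^+Z^-\le0$ (here is where $c\ge0$ enters), Cauchy--Schwarz, and the Poincar\'e inequality on $\Sigma_\lambda$ (whose constant tends to $0$), one forces $U^-=Z^-=0$, i.e. $U,Z\ge0$ in $\Sigma_\lambda$. Let $\lambda_0:=\inf\{\lambda\in(0,R):U,Z\ge0 \text{ in }\Sigma_{\lambda'} \text{ for all }\lambda'\in[\lambda,R)\}$. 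If $\lambda_0>0$ then $U,Z\ge0$ in $\Sigma_{\lambda_0}$; since $u,z>0$ in $\Omega$ vanish on $\partial\Omega$, neither $U$ nor $Z$ is identically $0$, so the scalar strong maximum principle applied to each equation (legitimate because the right-hand sides $Z$ and $c\,U$ are $\ge0$) gives $U,Z>0$ in $\Sigma_{\lambda_0}$; a standard compactness argument — pick a compact $K\subset\Sigma_{\lambda_0}$ with $|\Sigma_{\lambda_0}\setminus K|$ small, keep $U,Z>0$ on $K$ under small changes of $\lambda$, and re-run the small-measure estimate on $\Sigma_\lambda\setminus K$ — then yields $U,Z\ge0$ in $\Sigma_\lambda$ for $\lambda$ slightly below $\lambda_0$, contradicting minimality. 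Hence $\lambda_0=0$.

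Consequently $u(-x_1,x')\ge u(x_1,x')$ for $x_1>0$; the evenness $u(x_1,x')=u(-x_1,x')$ then follows either by repeating the argument from the opposite side or, more quickly, from the uniqueness in Theorem \ref{main:theorem}(3) applied to the reflected positive solution $x\mapsto u(-x_1,x')$. Moreover the above in fact gives $U>0$ in $\Sigma_\lambda$ for every $\lambda\in(0,R)$, so Hopf's boundary lemma applied to $-\Delta U+\tfrac\beta2 U=Z\ge0$ on the flat piece $\{x_1=\lambda\}\cap\Omega$ yields $\partial_{x_1}U>0$ there, i.e. $\partial_{x_1}u(\lambda,x')<0$; letting $\lambda$ vary over $(0,R)$ we obtain $\partial_{x_1}u<0$ on $\Omega\cap\{x_1>0\}$. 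For a hyperrectangle the proof is unchanged once one recalls that on convex domains (in particular on hyperrectangles) $u$ and $w$ belong to $W^{2,p}(\Omega)$ for every $p<\infty$, which supplies all the regularity used (Hopf's lemma and the strong maximum principle hold under such regularity, and the integral identities only use $H^2$).

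The genuine obstacle is the very first step: a fourth-order scalar equation a priori supports neither a comparison principle nor Hopf's lemma, so the Gidas--Ni--Nirenberg machinery cannot be applied directly. The substitution $z=-\Delta u+\tfrac{\beta}{2}u$ repairs this, but only if $g(s)=(1+\tfrac{\beta^2}{4})s-s^3$ is nondecreasing on the range of $u$, which requires \emph{both} the a priori bound $\|u\|_{L^\infty(\Omega)}\le1$ and $\beta\ge\sqrt8$ — exactly the standing hypotheses, and in line with the heuristic that for $\beta\ge\sqrt8$ the Laplacian is the dominant term. Once cooperativity is secured, the remainder is the by-now routine adaptation of the moving plane method to cooperative systems; the only delicate points are the thin-domain maximum principle and the corner regularity in the hyperrectangle case.
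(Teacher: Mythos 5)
Your proof is correct and follows essentially the same route as the paper: the same decomposition $z=-\Delta u+\tfrac{\beta}{2}u$ into a cooperative second-order system (exactly \eqref{cooperative:system} in the paper), the $L^\infty\le1$ bound coming from $\beta\ge\sqrt8$, and the moving plane method for cooperative systems (the paper simply cites Troy \cite{troy:1981} and, for non-smooth boundaries, the small-domain maximum principles of \cite{Berestycki:1991,FoldesPolacik:2009} rather than writing out the thin-cap Poincar\'e estimate you supply). The only stylistic difference is that the paper obtains the bound directly from the elementary Lemma \ref{u:bound:lemma}~ii), whereas you invoke Theorem \ref{main:theorem}(3) — logically harmless here, but a heavier tool than needed; your alternative proof of evenness by a second moving plane from the left is in any case the safer one.
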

The proof of Proposition \ref{symmetry:theorem} follows a moving-plane argument for systems, see Figure \ref{figure1} (right)
for the described symmetry.
Note that for $\beta \in (0,\sqrt{8})$ the solution oscillates when close to 1 in big enough domains, in particular it is not monotone, although it may still be symmetric.
In \emph{balls}, Proposition \ref{symmetry:theorem} implies Theorem \ref{theorem:radial:minimizers} with the stability assumption replaced by positivity of the solution.

\medskip

In the following we focus on properties of particular solutions, namely, global minimizers.  The next theorem states positivity of global radial minimizers, that is, functions $u\in H_{r}:=\{v\in H:
v \text{ is radial in }\Omega\}$ such that $J_\beta(u)\leq J_\beta(v)$ for all $v\in H_r$.

\begin{theorem}\label{positivity:minimizers}
Let $\Omega$ be a ball or an annulus, $\beta>0$, \eqref{eq:mas} hold, and let $u\in H_r$ be a global radial minimizer of \eqref{J0:definition}
with
$\|u\|_{L^\infty(\Omega)}\leq 1$. Then $\partial_r u$ does not change sign if $\Omega$ is a ball
and $\partial_r u$ changes sign exactly once if $\Omega$ is an annulus. 
\end{theorem}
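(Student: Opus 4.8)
The plan is to reformulate \eqref{efk:eq} as a second-order cooperative system via the substitution $-\Delta u + \tfrac{\beta}{2}u = v$, which turns the single fourth-order equation into
\[
-\Delta u + \tfrac{\beta}{2}u = v, \qquad -\Delta v + \tfrac{\beta}{2}v = u - u^3,
\]
with Dirichlet conditions $u=v=0$ on $\partial\Omega$; this is the standard device mentioned in the introduction. A radial solution $u=u(r)$ then corresponds to a radial pair $(u,v)$ on the interval $I=(0,R)$ (or $(R_1,R_2)$ for an annulus), and the claim about the sign of $\partial_r u$ becomes a claim about the nodal structure of the ODE system in the radial variable. The key point is that since $u\in H_r$ is a \emph{global} radial minimizer, $J_\beta''(u)[\varphi,\varphi]\ge 0$ for all radial test functions $\varphi$; in particular $u$ is a stable solution within the radial class, and the linearized operator $L:=\Delta^2 - \beta\Delta - 1 + 3u^2$ restricted to radial functions has a nonnegative first (radial) eigenvalue.

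First I would record basic regularity and the boundary behavior: by Theorem \ref{existence:positive:solution}/\ref{bifurcation:beta}-type elliptic regularity $u$ is classical, and the Navier conditions give $u=\Delta u=0$ on $\partial\Omega$, hence $v=0$ on $\partial\Omega$ as well. Next, the decisive step: I would argue that $\partial_r u$ (equivalently $u' = \partial_r u$ seen as a function on $\bar I$) can vanish only at a controlled number of interior points. The mechanism is the classical one for second-order radial problems: differentiating the first equation of the system in $r$ shows that $w:=u'$ satisfies a second-order ODE, and the stability of $u$ as a radial minimizer prevents $w$ from having too many sign changes. Concretely, if $\partial_r u$ had two interior zeros $r_1<r_2$ in a ball (so three nodal subintervals counting the behavior at $0$ and $R$), one constructs from the restriction of $\partial_r u$ to nodal subintervals a radial test function $\varphi$ — a suitable linear combination supported on two of the subintervals and arranged to be $H$-admissible — with $J_\beta''(u)[\varphi,\varphi]<0$, contradicting minimality. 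This is the "one-dimensionalization of the second variation" argument: $\partial_r u$ is (up to lower-order terms coming from the radial weight $r^{N-1}$ and the $-\Delta$ in the auxiliary system) an element of the kernel of the Jacobi-type operator, so between consecutive zeros it is a would-be eigenfunction with eigenvalue $\le 0$, and having two such zeros forces a genuine negative direction. In the ball case the additional input is that $\partial_r u(0)=0$ and $\partial_r u$ cannot vanish identically near $0$ unless $u$ is constant (ruled out by \eqref{eq:mas} and $u\not\equiv 0$ when $u$ is a nontrivial minimizer; if $u\equiv 0$ the statement "does not change sign" is vacuous), so "no interior zero" means $\partial_r u$ has one sign. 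In the annulus case the topology is different: $u$ must attain an interior maximum of $|u|$ (again since $u\not\equiv 0$ and $u=0$ on both components of $\partial\Omega$ the minimizer is nonzero and, by the $L^\infty$ bound, one shows it is one-signed, attaining an extremum strictly inside), forcing at least one sign change of $\partial_r u$; and the same second-variation argument forbids a second one.

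The step I expect to be the main obstacle is making the "test function from $\partial_r u$" construction rigorous in the fourth-order / system setting. Unlike the second-order case, $\partial_r u$ truncated to a nodal subinterval is generally \emph{not} in $H=H^2\cap H^1_0$, because cutting it off destroys the $\Delta\varphi=0$ boundary behavior and, even smoothed, $\partial_r u$ solves an equation for the \emph{system}, so the natural Jacobi quadratic form is the one for $(u,v)$, not a single scalar form; one must be careful that $J_\beta''(u)[\varphi,\varphi]$ is computed on $\varphi\in H$ and relate it to the auxiliary pair. I would handle this by working with the second-order system directly: the linearization of the system at $(u,v)$ is a cooperative system, its principal eigenvalue on the radial class is nonnegative by minimality, and the pair $(\partial_r u,\partial_r v)$ satisfies the linearized system up to the explicit first-order term $-(N-1)r^{-2}(\partial_r u,\partial_r v)$; a Sturm-type comparison / nodal-domain count for cooperative radial systems (in the spirit of the arguments used in Theorems \ref{theorem:radial:minimizers} and \ref{positivity:minimizers}'s companions) then yields the bound on the number of zeros. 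Ensuring the eventual test function lies in $H$ (Navier conditions) rather than merely $H^2$ is the delicate technical point, and I would resolve it by using the variational characterization of the radial principal eigenvalue together with a density/cut-off argument adapted to the Navier space, rather than plugging $\partial_r u$ in literally.
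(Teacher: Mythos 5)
Your proposal takes a route that is genuinely different from the paper's, and you have correctly identified the central obstacle, but the resolution you sketch does not close it and your strategy would in any case not recover the theorem in the stated generality. The paper does not touch the second variation at all. Instead it uses an elementary \emph{flipping technique}: if $u$ is a radial global minimizer with a positive interior maximum $M=u(\eta)$, one builds a competitor by setting $v=\frac{1-M}{1+M}(M-u)+M$ on $[0,\eta]$ and $v=u$ on $(\eta,R]$ (a rescaled reflection about the horizontal level $M$). Because $u'(\eta)=0$ and $u'(0)=0$, $v$ is $C^1$, piecewise $C^2$, hence in $H^2\cap H_0^1$ and radial; moreover $|v'|\le|u'|$, $|v''|\le|u''|$, $0\le v\le 1$, and $(v^2-1)^2\le(u^2-1)^2$ pointwise, with strict inequality on a set of positive measure, so $J_\beta(v)<J_\beta(u)$. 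An entirely analogous flip between an interior local max and the following local min eliminates extra sign changes of $\partial_r u$. The only inputs are $\|u\|_\infty\le 1$ and first-variation minimality; no cooperative structure, no eigenvalue theory, and no restriction on $\beta$ beyond $\beta>0$.

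The gap in your argument is precisely the one you flag, and it is fatal rather than technical. At an interior zero $r_0$ of $\partial_r u$ where the sign changes, generically $(\partial_r u)'(r_0)\neq 0$, so truncating $\partial_r u$ to a nodal subinterval produces a function with a corner that is not in $H^2$; smoothing such a corner changes $\int|\Delta\varphi|^2$ by an amount that does not vanish, so the standard second-order trick of ``plug nodal pieces of $u'$ into the quadratic form'' does not survive passage to fourth order, and a density argument cannot save it. Your fallback via the cooperative system $(u,v)$, $v=-\Delta u+\frac{\beta}{2}u$, and a Sturm-type nodal count introduces a second problem: the fully-coupled structure and the existence of a positive strict supersolution for the linearized system (needed to run the comparison) hold only under the restriction $\beta>\sqrt{12}-2\lambda_1$, exactly the condition that appears in Theorem \ref{theorem:radial:minimizers}; Theorem \ref{positivity:minimizers} is stated for all $\beta>0$ assuming only the $L^\infty$ bound. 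So even a fully rigorous version of your Sturm argument would prove a weaker statement. You would do better to replace the second-variation mechanism altogether with a direct energy comparison against an explicit $C^1$, piecewise-$C^2$ competitor, which is the paper's flipping construction.
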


Note that global minimizers satisfy the required bound for $\beta \geq \sqrt{8}$, see Proposition \ref{bound:global:minimizers} below. 
The proof of Theorem \ref{positivity:minimizers} relies on a new \emph{flipping technique} that preserves differentiability while diminishing the energy and it is therefore well suited for variational fourth-order problems. Theorem \ref{positivity:minimizers} clearly implies that global radial minimizers do not change sign and we conjecture that this property holds in general, even for $\beta < \sqrt{8}$. For $\beta$ large we can relax the assumptions on the solution, as stated in the following.

\begin{corollary}
Let $\Omega$ be a ball or an annulus, $\beta>\sqrt{12}-2\lambda_{1}$, \eqref{eq:mas} hold,  and let $u$ be a 
global minimizer of \eqref{J0:definition} in $H$. Then $u$ is radial and 
does not change sign in $\Omega$. Moreover, $\partial_r u$ does not change sign if $\Omega$ is a ball while $\partial_r u$ changes sign exactly once if $\Omega$ is an annulus.
\end{corollary}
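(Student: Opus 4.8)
The plan is to deduce the corollary by chaining together Theorem \ref{theorem:radial:minimizers}, Theorem \ref{positivity:minimizers}, and Proposition \ref{bound:global:minimizers}, after one preliminary observation: the two standing hypotheses $\beta > \sqrt{12} - 2\lambda_1$ and \eqref{eq:mas} are compatible only when $\beta$ is already large. Indeed, \eqref{eq:mas} yields both $\lambda_1 < 1$ and $\beta < (1-\lambda_1^2)/\lambda_1$, so that
\[
\sqrt{12} - 2\lambda_1 < \beta < \frac{1-\lambda_1^2}{\lambda_1};
\]
multiplying by $\lambda_1 > 0$ and rearranging gives $\lambda_1^2 - \sqrt{12}\,\lambda_1 + 1 > 0$, whose roots are $\sqrt 3 \pm \sqrt 2$. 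Since $0 < \lambda_1 < 1$ this forces $\lambda_1 < \sqrt 3 - \sqrt 2$, and plugging back in, $\beta > \sqrt{12} - 2(\sqrt 3 - \sqrt 2) = \sqrt 8$. Hence Proposition \ref{bound:global:minimizers} applies and any global minimizer $u$ of $J_\beta$ in $H$ satisfies $\|u\|_{L^\infty(\Omega)} \le 1$.

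With this bound secured, the rest should go quickly. First I would note that a global minimizer of $J_\beta$ in $H$ is in particular stable, since $J_\beta''(u)[v,v] \ge 0$ for all $v \in H$ is necessary at a minimum; so $u$ satisfies every hypothesis of Theorem \ref{theorem:radial:minimizers} and is therefore radial. Once $u$ is radial it lies in $H_r$ and minimizes $J_\beta$ over the larger space $H \supset H_r$, hence it is a global radial minimizer; combined with \eqref{eq:mas} and $\|u\|_{L^\infty(\Omega)} \le 1$, Theorem \ref{positivity:minimizers} then gives that $\partial_r u$ does not change sign when $\Omega$ is a ball and changes sign exactly once when $\Omega$ is an annulus.

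Finally I would upgrade this to a statement on $u$ itself using the Navier condition $u = 0$ on $\partial\Omega$. If $\Omega = B_R$, then $\partial_r u$ of constant sign makes $r \mapsto u(r)$ monotone on $[0,R]$ with $u(R) = 0$, so $u$ keeps the sign of $u(0)$ throughout $\Omega$. If $\Omega = \{a < |x| < b\}$, then $u$ vanishes on both bounding spheres while $\partial_r u$ changes sign exactly once, say at $r_0 \in (a,b)$; thus $u$ is monotone on $(a,r_0)$ and on $(r_0,b)$ with opposite monotonicities, and together with $u(a) = u(b) = 0$ this forces $u$ to keep a single sign on the open annulus.

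The substantive step is really the opening compatibility computation: realizing that the hypotheses are vacuous unless $\beta > \sqrt 8$ is exactly what unlocks the $L^\infty$ bound of Proposition \ref{bound:global:minimizers}. Once that bound is available, the corollary is just a careful assembly of Theorems \ref{theorem:radial:minimizers} and \ref{positivity:minimizers} together with the elementary boundary argument above, so I expect no further obstacle.
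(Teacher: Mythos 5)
Your proof is correct and follows the same route as the paper: deduce $\beta \geq \sqrt{8}$ from the compatibility of the two hypotheses, invoke Proposition \ref{bound:global:minimizers} for the $L^\infty$ bound, apply Theorem \ref{theorem:radial:minimizers} (noting a global minimizer is stable) to get radial symmetry, then apply Theorem \ref{positivity:minimizers}. You spell out the $\beta > \sqrt{8}$ computation and the final monotonicity-plus-boundary-conditions argument in more detail than the paper, but the substance is identical.
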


\begin{proof}
From $\beta>\sqrt{12}-2\lambda_{1}$ and \eqref{eq:mas} follows $\beta \geq \sqrt{8}$ and 
from Proposition \ref{bound:global:minimizers}, we deduce that $\|u\|_{L^\infty(\Omega)}\leq 1$. Theorem \ref{theorem:radial:minimizers} therefore implies that $u$ is radial. Then $u$ is in particular the global minimizer in $H_r$ and the corollary follows from Theorem \ref{positivity:minimizers}.

\end{proof}

As last theorem we present a uniqueness and continuity result for 
\begin{equation}\label{efk:gamma}
\begin{aligned}
 \gamma \Delta^2 u - \Delta u &= u-u^3 &&\quad \text{ in }\Omega,\\
 u=\Delta u&=0 &&\quad \text{ on }\partial \Omega \,,
\end{aligned}
\end{equation}
when $\gamma \to 0$.

\begin{theorem}\label{convergence:theorem} Let $\Omega\subset \R^N$ be a smooth bounded domain
with the first Dirichlet eigenvalue $\lambda_1 < 1$.  Let $\gamma\geq 0$ and $u_\gamma$ be a global minimizer in $H$ of 
\begin{align}\label{J:gamma}
\int_\Omega \left(\gamma \frac{|\Delta u|^2}{2} + \frac{|\nabla u|^2}{2} + \frac{u^4}{4} - \frac{u^2}{2}\right) \, dx.
\end{align}
There is $\delta(\Omega)=\delta>0$ such that, for all $\gamma\in [0,\delta]$, $u_\gamma$ is the unique global minimizer in $H$ and $u_\gamma>0$ in $\Omega$. Moreover, the function $[-\delta,\delta]\to C^{4}(\Omega);$ $\gamma\mapsto u_\gamma$ is continuous and $u_0$ is the global minimizer of \eqref{J:gamma} in $H^1_0(\Omega)$ with $\gamma=0$.
\end{theorem}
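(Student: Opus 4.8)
The plan is to isolate the degenerate endpoint $\gamma=0$, where \eqref{efk:gamma} reduces to the Allen--Cahn equation $-\Delta u=u-u^3$, to transfer the results for $\beta\ge\sqrt8$ to $\gamma>0$ by a scaling, and to glue everything together with a priori estimates, compactness, and the implicit function theorem. For $\gamma=0$ the functional in \eqref{J:gamma} is the Allen--Cahn energy: it is coercive and weakly lower semicontinuous on $H^1_0(\Omega)$, the space $H$ is dense in $H^1_0(\Omega)$, and since $\lambda_1<1$ the infimum is negative and attained, with the infima over $H$ and over $H^1_0(\Omega)$ equal. By \cite{berestycki:1981} there is a unique positive solution $u_0$, and (necessarily up to sign, since \eqref{J:gamma} is even) $u_0$ is the unique minimizer over $H^1_0(\Omega)$; elliptic regularity gives $u_0\in C^\infty(\overline\Omega)\cap H$, which is the last assertion of the theorem. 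Pairing the linearized equation $-\Delta\varphi=(1-3u_0^2)\varphi$ with $u_0$, and the equation for $u_0$ with $\varphi$, forces $\int_\Omega u_0^3\varphi=0$; since $u_0$ minimizes, its linearization is nonnegative, hence any kernel element is one-signed, so $\varphi\equiv0$ and $u_0$ is non-degenerate. Finally $u_0>0$ in $\Omega$ with negative exterior normal derivative on $\partial\Omega$, and $\Delta u_0=-(u_0-u_0^3)=0$ on $\partial\Omega$, so $u_0$ also satisfies the Navier conditions.

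Next I would use the scaling $u\mapsto u(\gamma^{1/4}\cdot)$, which sends a solution of \eqref{efk:gamma} on $\Omega$ to a solution of \eqref{efk:eq} on $\gamma^{-1/4}\Omega$ with $\beta=\gamma^{-1/2}$, and \eqref{J:gamma} to a positive multiple of \eqref{J0:definition}. Since $\lambda_1(\gamma^{-1/4}\Omega)=\gamma^{1/2}\lambda_1$, condition \eqref{eq:mas} on the rescaled domain reads $\gamma\lambda_1^2+\lambda_1<1$, true for small $\gamma$ because $\lambda_1<1$, while $\beta\ge\sqrt8$ once $\gamma\le1/8$. Hence Theorem~\ref{main:theorem}(3) gives, for $\gamma$ in some interval $(0,\delta_1]$, a unique positive classical solution $u_\gamma$ of \eqref{efk:gamma}, strictly stable and with $\|u_\gamma\|_{L^\infty}\le1$; and Proposition~\ref{bound:global:minimizers} (via the same rescaling) or the auxiliary minimization of Theorem~\ref{existence:positive:solution} gives $\|u\|_{L^\infty}\le1$ for every global minimizer $u$ of \eqref{J:gamma} with $\gamma\le\delta_1$. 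Writing \eqref{efk:gamma} as the system $-\Delta u=v$, $-\gamma\Delta v+v=f(u)$ with $f(s):=s-s^3$ and zero Dirichlet data, the maximum principle applied to the second equation bounds $\|v\|_{L^\infty}$ uniformly in $\gamma$, and then $L^p$-estimates for $-\Delta u=v$ bound the global minimizers uniformly in $C^{1,\alpha}(\overline\Omega)\cap H^2(\Omega)$. A compactness-and-identification argument (lower semicontinuity of \eqref{J:gamma} and the $\gamma=0$ analysis) then shows that any sequence $(u_{\gamma_n})$ of global minimizers with $\gamma_n\to0^+$ has a subsequence converging in $C^1(\overline\Omega)$ to $u_0$ or to $-u_0$; being $C^1$-close to the strictly signed $\pm u_0$, each such minimizer is one-signed in $\Omega$, hence equal to $\pm u_{\gamma_n}$ (the unique positive solution of \eqref{efk:gamma}, or its negative). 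This gives, for some $\delta\in(0,\delta_1]$, uniqueness up to sign and positivity of the global minimizers on $[0,\delta]$, and in particular $u_\gamma\to u_0$ in $C^1(\overline\Omega)$ as $\gamma\to0^+$.

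For the continuity of $\gamma\mapsto u_\gamma$: at any $\gamma_0\in(0,\delta]$ strict stability makes the linearization $\gamma_0\Delta^2-\Delta+3u_{\gamma_0}^2-1$ with Navier conditions invertible, so the implicit function theorem produces a $C^1$ curve near $\gamma_0$ which, by the uniqueness above, coincides with the minimizing branch for $\gamma\ge0$ and continues it to $\gamma<0$; elliptic regularity makes it $C^4(\Omega)$-valued. Continuity at $\gamma=0^+$ I would obtain by bootstrapping $u_\gamma\to u_0$: since $f(0)=f''(0)=0$ and $\Delta u_\gamma=0$ on $\partial\Omega$, the data of the $v$-equation are compatible with the boundary conditions, so no boundary layer forms at the orders relevant to $C^4$; alternating interior and boundary Schauder estimates for the two equations of the system, together with $\|v_\gamma-f(u_\gamma)\|=O(\gamma^{1/2})$ in the relevant H\"older norms, upgrades the convergence to $C^4(\Omega)$. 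For $\gamma<0$ close to $0$ one continues $u_\gamma$ past the at most countably many parameter values where the linearization degenerates, using the bifurcation analysis behind Theorem~\ref{bifurcation:beta}.

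The main obstacle is the behaviour at the singular endpoint $\gamma=0$: the term $\gamma\Delta^2$ raises the order of the equation, so the naive implicit function theorem fails---the linearization of the limit equation is not surjective onto the target space dictated by the $\gamma\neq0$ problem---and it must be replaced by the $\gamma$-uniform estimates above combined with the boundary compatibility of the nonlinearity, which rules out boundary layers up to fourth order. The case $\gamma<0$, where neither the coercivity of \eqref{J:gamma} nor the maximum principle for the $v$-equation survives, is the most delicate piece of that analysis.
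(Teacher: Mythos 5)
Your overall architecture matches the paper's: analyze the $\gamma=0$ problem via \cite{berestycki:1981}, use the scaling $u\mapsto u(\gamma^{1/4}\cdot)$ (which is exactly the one in the paper's proof with $\mu=\gamma^{-1/4}$) to import the $\beta\ge\sqrt8$ results, establish $\gamma$-uniform regularity so that $u_\gamma\to u^*$ in $C^4$, and then invoke the implicit function theorem near $(0,u^*)$. Your diagnosis of the main obstacle -- that the term $\gamma\Delta^2$ changes the order and defeats a naive IFT -- is also the right one, and the paper's answer to it is precisely its Lemma~\ref{l:reg:2}, which delivers a $W^{6,p}$-bound independent of $\gamma$ and hence compactness in $C^4$.

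Where you diverge is in how you propose to get the $\gamma$-uniform higher-order estimates, and this is where the gap is. Your maximum-principle argument on the system $-\Delta u=v$, $-\gamma\Delta v+v=f(u)$ gives, as you say, $C^{1,\alpha}$ bounds uniform in $\gamma$; that much is fine. But the upgrade to $C^4$ is only sketched, and the sketch is not yet a proof. The key quantitative claim you rely on, that $\|v_\gamma-f(u_\gamma)\|=O(\gamma^{1/2})$ ``in the relevant H\"older norms,'' does not follow from the equation without control of $\Delta v_\gamma$ uniformly up to the boundary (from $-\gamma\Delta v+v=f(u)$ one gets $v-f(u)=\gamma\Delta v$, and the missing ingredient is exactly a $\gamma$-uniform bound on $\Delta v$). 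The paper's Lemma~\ref{l:reg:2} does the real work here: it iterates the equation for $w=\Delta u$ and then for $\xi=\Delta w$, each time verifying that the Navier boundary data propagate (which hinges on the nonlinear source $f_1=\Delta(u-u^3)$ vanishing on $\partial\Omega$ -- the fact you correctly flag as crucial), and it stops after exactly two iterations because the next source $f_2$ no longer vanishes on $\partial\Omega$. Your ``alternating interior and boundary Schauder estimates'' would have to reproduce this bookkeeping to be complete, and it is exactly the subtle part.

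Two smaller remarks. First, your handling of $\gamma<0$ by ``continuing past the at most countably many parameter values where the linearization degenerates'' is both vaguer and unnecessary: once $u_\gamma\to u^*$ in $C^4$ and $u^*$ is non-degenerate, a single application of the IFT at $(0,u^*)$ gives a continuous branch on a full two-sided neighbourhood $[-\delta,\delta]$, which is all the theorem asks. Second, you apply the IFT at interior points $\gamma_0>0$ (where the fourth-order linearization is genuinely invertible) in addition to the singular endpoint; this is sound but redundant for the statement once the endpoint has been treated, and it does not by itself resolve the endpoint difficulty you identified.
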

The proof of this singular perturbation result relies on regularity estimates which are independent of the parameter $\gamma$, see Lemma \ref{l:reg:2}.

The paper is organized as follows. In Sections \ref{auxiliary:section}, \ref{a:priori:section}, and \ref{bounds:section} we prove crucial auxiliary lemmas for obtaining a priori estimates of solutions.
  Section \ref{positive:solutions:section} contains the proof of the variational part of Theorem \ref{main:theorem}.
Our study of the stability of solutions is contained in Section \ref{section:stability} and \ref{radial:symmetry:section}. In Section \ref{positivity:radial:section} we present our flipping method and prove Theorem \ref{positivity:minimizers}.  The proof of Proposition \ref{symmetry:theorem} can be found in Section \ref{symmetry:section} and the saddle solution is constructed in Section \ref{saddle:section}. The bifurcation result involved in the proof of Theorem \ref{main:theorem} is contained in Section \ref{bifurcation:section} and the continuity result, Theorem \ref{convergence:theorem}, is proved in Section \ref{convergence:section}. Finally, in Section \ref{numerics} we include two numerical approximations of bifurcation branches.

\medskip
To close this Introduction, let us mention that the case $\beta < \sqrt{8}$ requires a different approach due to possible oscillations around 1. In particular, in this case 
strict stability, uniqueness, and symmetry properties of positive solutions and global minimizers are not known.

\vspace{.2cm}

 \noindent \textbf{Acknowledgements:}
We thank Guido Sweers and Pavol Quitter for very helpful discussions and suggestions related to the paper.  We also want to thank Christophe Troestler for introducing us to FreeFem++.

\section{Auxiliary lemma}\label{auxiliary:section}

We prove a very helpful lemma that allows us to obtain a priori bounds on classical solutions. These arguments were used simultaneously in \cite[Lemma 3.1.]{BH} to obtain similar a priori estimates in $\R^N$.

For the rest of the paper we denote by $C_0(\bar{\Omega})$ the space of continuous functions in $\overline{\Omega}$ vanishing on $\partial \Omega$. 

\begin{lemma}\label{bounds:lemma}
 Let $\Omega\subset \mathbb  R^N$ be a bounded domain, $\beta>0$, $f:\mathbb R\to\mathbb R$ a continuous function satisfying $f(0)=0$, and let 
 $u\in C^4(\Omega)\cap C_0(\bar{\Omega})$ be a solution of $\Delta^2u-\beta \Delta u = f(u)$ in $\Omega$ such that $\Delta u\in C_0(\bar{\Omega})$. Set $\overline{u}:=\max\limits_{\overline{\Omega}}u,$ $\underline{u}:=\min\limits_{\overline{\Omega}}u,$ and $g:\mathbb R\to\mathbb R$ given by $g(s):=\frac{4}{\beta^2}f(s)+s$. Then 
\begin{align}\label{bounds:2}
\overline{u}\leq \max\limits_{[\underline{u},\overline{u}]}g\qquad \text{ and }\qquad \underline{u}\geq \min\limits_{[\underline{u},\overline{u}]}g.
\end{align}
Moreover,
\begin{enumerate}
 \item If $\overline{u}\leq \max\limits_{[0,\overline{u}]}g$ and $f\leq 0$ in $(1,\infty),$ then $\overline{u}\leq \max\limits_{[0,1]}g.$
 \item If $\underline{u}\geq \min\limits_{[\underline{u},0]}g$ and $f\geq 0$ in $(-\infty,-1),$ then $\underline{u}\geq \min\limits_{[-1,0]}g.$
 \end{enumerate}
\end{lemma}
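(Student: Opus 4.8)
The plan is to recast the fourth‑order equation as a second‑order cooperative elliptic system and then apply maximum principles at interior extrema. Since $\Delta^2-\beta\Delta=(\Delta-\tfrac{\beta}{2})^2-\tfrac{\beta^2}{4}$, the natural choice is $v:=-\Delta u+\tfrac{\beta}{2}u$. Using $\Delta^2 u=\beta\Delta u+f(u)$, a direct computation shows that $(u,v)$ solves $-\Delta u+\tfrac{\beta}{2}u=v$ and $-\Delta v+\tfrac{\beta}{2}v=\tfrac{\beta^2}{4}g(u)$ in $\Omega$, with $u=v=0$ on $\partial\Omega$; here the condition $v=0$ on $\partial\Omega$ uses the hypothesis $\Delta u\in C_0(\bar\Omega)$, and $u,\Delta u\in C^2(\Omega)$ give $v\in C^2(\Omega)\cap C_0(\bar\Omega)$. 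The whole point is that $-\Delta+\tfrac{\beta}{2}$ with Dirichlet data satisfies the weak and strong maximum principles.

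To obtain \eqref{bounds:2}, first observe that $u=0$ on $\partial\Omega$ forces $\overline u\ge 0\ge\underline u$. Assuming $\overline u>0$, the function $u$ attains its maximum at an interior point $x_1$, where $-\Delta u(x_1)\ge 0$, so $v(x_1)\ge\tfrac{\beta}{2}\overline u>0$; hence $v$ is somewhere positive and, vanishing on $\partial\Omega$, attains its maximum over $\overline\Omega$ at an interior point $x_0$ with $-\Delta v(x_0)\ge 0$. The second equation gives $\tfrac{\beta}{2}\max_{\overline\Omega}v\le\tfrac{\beta^2}{4}g(u(x_0))\le\tfrac{\beta^2}{4}\max_{[\underline u,\overline u]}g$, and chaining $\tfrac{\beta}{2}\overline u\le v(x_1)\le\max_{\overline\Omega}v$ yields $\overline u\le\max_{[\underline u,\overline u]}g$. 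If $\overline u=0$ the bound is immediate from $g(0)=\tfrac{4}{\beta^2}f(0)=0\le\max_{[\underline u,\overline u]}g$. The lower bound for $\underline u$ follows symmetrically, exchanging maxima and minima.

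For the two ``moreover'' statements I would argue directly on the function $g$; item~2 is symmetric to item~1 under $s\mapsto -s$ (replacing $f$ by $-f(-\,\cdot\,)$ and $g$ by $-g(-\,\cdot\,)$), so I only sketch item~1. If $\overline u\le 1$ then $\max_{[0,\overline u]}g\le\max_{[0,1]}g$ and the hypothesis $\overline u\le\max_{[0,\overline u]}g$ gives the claim. If $\overline u>1$, then $f\le 0$ on $(1,\infty)$ gives $g(s)=s+\tfrac{4}{\beta^2}f(s)\le s$ for $s\in(1,\overline u]$. Pick a maximizer $s_0\in[0,\overline u]$ of $g$. If $s_0\le 1$ we are done, since $\overline u\le g(s_0)\le\max_{[0,1]}g$. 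Otherwise $1<s_0\le\overline u$ and $\overline u\le g(s_0)\le s_0\le\overline u$, forcing $s_0=\overline u$ and $f(\overline u)=0$, i.e.\ $g\le\overline u$ on $[0,\overline u]$ with $g(\overline u)=\overline u$. For the model nonlinearity $f(u)=u-u^3$ this is impossible because $f<0$ strictly on $(1,\infty)$; in the general case, and when $u\ge 0$ (the situation in which the hypothesis $\overline u\le\max_{[0,\overline u]}g$ is actually used), I would exclude it with the strong maximum principle applied to $w:=\overline u-u\ge 0$ and $z:=\tfrac{\beta}{2}\overline u-v$, which satisfy $-\Delta w+\tfrac{\beta}{2}w=z$, $-\Delta z+\tfrac{\beta}{2}z=\tfrac{\beta^2}{4}(\overline u-g(u))\ge 0$, and $z=\tfrac{\beta}{2}\overline u>0$ on $\partial\Omega$: since $z\ge 0$ by the weak maximum principle while $w$ attains its minimum $0$ at an interior point (forcing $z=0$ there), the strong maximum principle gives $z\equiv 0$, contradicting its positive boundary values. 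Hence $s_0\le 1$ and $\overline u\le\max_{[0,1]}g$.

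The substitution and the weak maximum principle make the core estimate \eqref{bounds:2} essentially routine. I expect the points that require genuine care to be (i) the edge cases in which extrema of $u$ or $v$ are attained on $\partial\Omega$, which are dispatched using $u|_{\partial\Omega}=0$ together with $g(0)=0$; and (ii) the borderline configuration in the ``moreover'' parts, where the maximum of $g$ over $[0,\overline u]$ is attained only at $\overline u$ with $f(\overline u)=0$ — this is the real obstacle, handled either by strictness of the sign condition on $f$ or by the strong maximum principle for the reduced system.
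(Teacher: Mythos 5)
Your proof of \eqref{bounds:2} is correct and essentially the same as the paper's: you introduce $v=-\Delta u+\tfrac{\beta}{2}u$ (the paper's $w$), pass to the system $-\Delta u+\tfrac{\beta}{2}u=v$, $-\Delta v+\tfrac{\beta}{2}v=\tfrac{\beta^2}{4}g(u)$, and compare values at interior extrema via the sign of $-\Delta$ there, chaining $\tfrac{\beta}{2}\overline u\le v(x_1)\le v(x_0)$ with $\tfrac{\beta}{2}v(x_0)\le\tfrac{\beta^2}{4}g(u(x_0))$; the paper records the dual chain for $\underline u$ and notes the other is analogous. Your preliminary observation $\overline u\ge 0\ge\underline u$ is a tidy replacement for the paper's explicit case split on whether the extremum of $u$ or of $w$ sits on $\partial\Omega$, but the substance is the same.

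For the ``moreover'' items your argument is also the paper's idea: use $f\le 0$ on $(1,\infty)$, hence $g\le\mathrm{id}$ there, and locate the maximizer $s_0$ of $g$ over $[0,\overline u]$ (the paper does the dual computation $g\ge\mathrm{id}$ on $(-\infty,-1)$ for item~2). Where you go beyond the paper is in explicitly isolating the borderline subcase $s_0=\overline u>1$ with $f(\overline u)=0$, $g\equiv\overline u$ at the maximizer, which the paper's one-line deduction does not address. You correctly observe that this subcase cannot occur for $f(s)=s-s^3$ or its truncations (strict sign past $1$), which is precisely why the issue is harmless in every application the paper makes of this lemma. Your supplementary exclusion of the subcase for general $f$ via the strong maximum principle applied to $w=\overline u-u$ and $z=\tfrac{\beta}{2}\overline u-v$ is a correct computation, but note that it imports the extra hypothesis $u\ge 0$ (needed to ensure the range of $u$ lies in $[0,\overline u]$, so that $\overline u-g(u)\ge 0$), which is not among the lemma's hypotheses, and it leans on the PDE structure whereas the ``moreover'' claims are formulated and used as purely real-variable statements about $\overline u$, $\underline u$, $g$. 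So your extra step does not close the statement in the stated generality; as a practical matter it is moot, since in the paper $f$ is always strict beyond $\pm 1$.
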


\begin{proof}
Let $w\in C^2(\Omega)\cap C_0(\bar{\Omega})$ be given by $w(x):=-\Delta u(x)+\frac{\beta}{2} u(x)$. We prove only the second inequality in \eqref{bounds:2} 
as the first one follows similarly. Fix $x_0,\xi_0\in\overline{\Omega}$ such that $u(x_0)=\underline{u}$ and $w(\xi_0)=\min\limits_{\overline{\Omega}}w.$ If $x_0\in\partial \Omega$, then
\begin{equation*}
\underline{u} = u(x_0) =  0 = g(u(x_0)) \geq \min\limits_{[\underline{u},\overline{u}]}g
\end{equation*}
and the second inequality in \eqref{bounds:2} follows. If $\xi_0\in\partial \Omega$, then $-\Delta u+\frac{\beta}{2} u\geq 0$ in $\Omega$ and the maximum principle implies that $\underline u=0$ and the second inequality in \eqref{bounds:2} follows. If $x_0,\xi_0\in\Omega$, then  
$w(\xi_0)\leq w(x_0)=-\Delta u(x_0)+\frac{\beta}{2} u(x_0)\leq \frac{\beta}{2} u(x_0)$.
Since $-\Delta w(\xi_0) \leq 0$,
\begin{equation*}
\begin{aligned}
 f(u(\xi_0))+\frac{\beta^2}{4}u(\xi_0)&=\Delta^2 u(\xi_0)-\beta\Delta u(\xi_0)+\frac{\beta^2}{4}u(\xi_0)\\
 &=-\Delta w(\xi_0)+\frac{\beta}{2}w(\xi_0)\leq \frac{\beta^2}{4}u(x_0),
\end{aligned}
\end{equation*}
that implies $\underline{u}\geq \min\limits_{[\underline{u},\overline{u}]} g$.

We now prove claim 2. only as claim 1. is analogous. Assume $\underline{u}\leq -1$, otherwise the statement is trivial. Since $f\geq 0$ in $(-\infty, -1)$, then $g(s) \geq s$ in $(- \infty, -1)$, and therefore $\min \limits_{[\underline{u}, -1]}g \geq \min \limits_{[\underline{u}, -1]}s=\underline{u}$. Thus, if $\underline{u}\geq \min\limits_{[\underline{u}, 0]}g$ then $\underline{u}\geq\min\limits_{[-1, 0]}g$ as claimed. 

\end{proof}

\subsection{Regularity}

In this section we prove two regularity results. The first one is a rather standard 
application of known arguments in the fourth order setting and we include details for reader's
convenience. The second lemma is more subtle and the crucial point is the dependencies of the constants involved.

Recall the definition of the space $H$ given in \eqref{H:def}.

\begin{lemma}\label{l:reg}
Let $\Omega\subset \mathbb \R^N,$ $N\geq 1$ be a smooth bounded domain or hyperrectangle 
and fix $\beta, \gamma > 0,$ and $f \in L^\infty(\Omega)$. Let $u \in H$ be a weak solution of $\gamma \Delta^2 u - \beta \Delta u = f$, that is,
 \begin{equation}\label{wkk}
 \int_{\Omega} \gamma \Delta u \Delta \phi + \beta \nabla u \nabla \phi - f \phi \, dx = 0 \qquad \forall  \phi \in H.
 \end{equation} 
Then for each $p > 1$ one has $u \in W^{4, p}(\Omega)\cap C^3(\overline{\Omega})$ with $u = \Delta u = 0$ on $\partial \Omega$ and 
\begin{equation*}
\|u\|_{W^{4, p}(\Omega)} \leq C \|f\|_{L^{\infty}(\Omega)},
\end{equation*} 
where $C = C(\beta, \gamma, p, \Omega)$.  In addition, if $f \in C^{\alpha}(\Omega)$ for some $\alpha \in (0, 1),$ then $u \in C^{4, \alpha} (\overline{\Omega})$ with $\Delta u \in C_0(\overline{\Omega})$ and 
\begin{equation}\label{ret:sch}
\|u\|_{C^{4,\alpha}(\Omega)} \leq \tilde C \|f\|_{C^{\alpha}(\Omega)},
\end{equation} 
where $\tilde C = \tilde C(\beta, \gamma, p, \Omega)$.
\end{lemma}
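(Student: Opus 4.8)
The plan is to rewrite the fourth-order problem as a second-order elliptic system and bootstrap. Introduce the auxiliary function $v := -\gamma\Delta u + \beta u$ (or, to keep homogeneous Dirichlet data on both unknowns, $v := -\Delta u$). With the latter choice, the weak formulation \eqref{wkk} says precisely that $(u,v)\in H_0^1(\Omega)\times H_0^1(\Omega)$ is a weak solution of the cooperative-looking system
\begin{align*}
-\Delta u &= v, \\
-\gamma\Delta v + \beta v &= f \qquad \text{ (equivalently } -\gamma\Delta v = f - \beta v\text{)},
\end{align*}
with $u = v = 0$ on $\partial\Omega$; here one must first check that the test-function space $H$ is rich enough to decouple the system, which is standard since $C_c^\infty(\Omega)\subset H$ and the bilinear form splits. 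First I would treat the second equation: since $f\in L^\infty(\Omega)\subset L^p(\Omega)$ for every $p$ and $v\in H_0^1(\Omega)\subset L^{2^*}$, classical $L^p$-elliptic regularity (Agmon--Douglis--Nirenberg for smooth domains; for a hyperrectangle one reflects across the faces, using the Navier/Dirichlet condition $v=0$, to reduce to a periodic or whole-space problem and invoke interior estimates) gives $v\in W^{2,p}(\Omega)$ with $\|v\|_{W^{2,p}}\le C(\beta,\gamma,p,\Omega)(\|f\|_{L^p}+\|v\|_{L^p})$, and then Calderón--Zygmund plus Sobolev embedding bootstraps $v\in W^{2,p}$ for all $p$, hence $v\in C^{1,\alpha}(\overline\Omega)$. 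Feeding this into the first equation, $-\Delta u = v\in C^{1,\alpha}\subset L^\infty$, gives $u\in W^{2,p}$ for all $p$ and, more importantly, $-\Delta u = v\in W^{2,p}$ yields $u\in W^{4,p}(\Omega)$ with the claimed estimate $\|u\|_{W^{4,p}}\le C\|f\|_{L^\infty}$; the $L^p$ norms of $v$ on the right are absorbed using the a priori bound $\|v\|_{L^p}\lesssim \|v\|_{H_0^1}\lesssim \|f\|_{L^2}\lesssim\|f\|_{L^\infty}$ coming from testing \eqref{wkk} with $\phi$ built from $v$ (coercivity of the form). The Sobolev embedding $W^{4,p}\hookrightarrow C^3(\overline\Omega)$ for $p>N$ finishes the first assertion, and $u=\Delta u = 0$ on $\partial\Omega$ is read off from $u,v\in H_0^1$ together with the gained continuity.

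For the Schauder part, assume $f\in C^\alpha(\Omega)$. Then in the $v$-equation $-\gamma\Delta v = f-\beta v$ with right-hand side in $C^\alpha(\overline\Omega)$ (we already know $v\in C^{1,\alpha}$), so interior-plus-boundary Schauder estimates give $v\in C^{2,\alpha}(\overline\Omega)$ with $\|v\|_{C^{2,\alpha}}\le C(\|f\|_{C^\alpha}+\|v\|_{C^0})$; then $-\Delta u = v\in C^{2,\alpha}$ gives $u\in C^{4,\alpha}(\overline\Omega)$ and the estimate \eqref{ret:sch} after absorbing lower-order norms as above. Since $\Delta u = -v$ and $v\in H_0^1\cap C(\overline\Omega)$, we get $\Delta u\in C_0(\overline\Omega)$.

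The main obstacle I anticipate is not the bootstrap itself — that is routine — but handling the hyperrectangle case cleanly, since standard boundary Schauder/$L^p$ estimates are stated for $C^{2,\alpha}$ (or at least $C^{1,1}$) domains and a hyperrectangle has corners. The Navier boundary conditions save us: because both $u$ and $v=-\Delta u$ vanish on $\partial\Omega$, one can extend $u$ and $v$ by odd reflection across each face of the rectangle, and the reflected functions solve the same type of equation (the Laplacian commutes with odd reflection, and $f$ extends to an $L^\infty$, respectively $C^\alpha$ away from the reflection hyperplanes, function) on a larger rectangle, ultimately on all of $\R^N$ with periodicity; interior estimates on this reflected problem then yield the desired global estimates on the original rectangle, with constants depending only on $\beta,\gamma,p,\Omega$. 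Care is needed to check that odd reflection of a $C^\alpha$ function across a hyperplane on which it vanishes is again $C^\alpha$ — true — and that no spurious regularity loss occurs at the reflection hyperplanes for the higher derivatives of $u$; this works because $u$ and $\Delta u$ both vanish there, so $u$ extends in $C^{4,\alpha}$. I would state this reflection reduction once and then run the smooth-domain argument.
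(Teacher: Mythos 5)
Your overall strategy mirrors the paper's: decompose the fourth-order problem into a second-order system, apply $L^p$ and Schauder regularity, and use odd reflection for the hyperrectangle case. However, there is a genuine gap in the way you set up the system. You define $v := -\Delta u$ and then assert that $(u,v)\in H_0^1(\Omega)\times H_0^1(\Omega)$ is a weak solution of the coupled system. But for a given weak solution $u\in H = H^2(\Omega)\cap H_0^1(\Omega)$, the function $\Delta u$ is only in $L^2(\Omega)$; it need not lie in $H^1(\Omega)$, and its trace on $\partial\Omega$ is not defined a priori, let alone zero. The boundary condition $\Delta u=0$ on $\partial\Omega$ is a \emph{conclusion} of the lemma, not a hypothesis: it is the natural boundary condition encoded in the choice of test space $H$, and extracting it requires already knowing $u$ has higher regularity (e.g.\ $u\in H^4$, so one can integrate by parts twice in $\int\Delta u\,\Delta\phi$ and see that the surviving boundary term $\int_{\partial\Omega}\Delta u\,\partial_\nu\phi$ must vanish for arbitrary $\partial_\nu\phi$). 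Your remark that ``$C_c^\infty(\Omega)\subset H$ and the bilinear form splits'' does not address this: using only $\phi\in C_c^\infty$ you get $-\gamma\Delta v + \beta v = f$ in the sense of distributions, but no boundary information on $v$.

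The paper avoids this circularity by working in the opposite direction: it \emph{constructs}, via Riesz representation, functions $\bar v,\bar u\in H_0^1(\Omega)$ solving $-\Delta\bar v + (\beta/\gamma)\bar v = f$ and $-\gamma\Delta\bar u = \bar v$, gets the claimed regularity for $(\bar u,\bar v)$ from standard second-order theory, checks that $\bar u\in H$ satisfies the weak formulation \eqref{wkk} by integration by parts, and then invokes uniqueness of weak solutions of \eqref{wkk} to conclude $u\equiv\bar u$. You should restructure along these lines (or alternatively show $v=-\Delta u\in H_0^1$ via a duality/very-weak-solution argument before bootstrapping). Once that is fixed, the rest of your bootstrap, the Schauder step, and the reflection argument on the hyperrectangle are in line with the paper's proof.
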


\begin{proof}
Throughout the proof $C>0$ denotes possibly different constants depending only on $\beta$, $\gamma$, $p$, and $\Omega$.

Assume first that $\Omega\subset \mathbb R^N,$ $N\geq 1,$ is a smooth bounded domain. By the Riesz representation theorem there are weak solutions $\bar{u}, \bar{v}\in H^1_0 (\Omega)$ of the equations
\begin{equation}\label{sys}
\begin{aligned}
- \Delta \bar{v} + \frac{\beta}{\gamma} \bar{v} = f \quad\text{ and }\quad  - \gamma \Delta \bar{u} = \bar{v}\qquad  \textrm{in } \Omega.
\end{aligned}
\end{equation}

Fix $p>1$.  Then, by \cite[Lemma 9.17]{gilbarg:2001} and \cite[Ch 9 Sec 2 Thm 3]{krylov:2008},
$\|\bar u\|_{W^{4, p}(\Omega)} \leq C \|\bar v\|_{W^{2, p}(\Omega)}  \leq C\|f\|_{L^{p}(\Omega)}$, which gives the first estimate in the statement.  

By embedding theorems, we have $\bar{u} \in C^3(\overline{\Omega})$ and $\bar{u} = 0 = \bar{v} = -\gamma \Delta \bar{u}$ on $\partial \Omega$. Let $u\in H$ be as in the statement. By integration by parts, $\bar{u}$ satisfies \eqref{wkk} and $u \equiv \bar{u}$ in $\Omega$, since weak solutions of \eqref{wkk} are unique.  Finally, if $f \in C^{\alpha}(\Omega)$, by Schauder estimates \cite[Thm. 6.3.2]{krylov:1996}, $\bar v\in C^{2, \alpha} (\overline{\Omega}),$ $u \in C^{4, \alpha} (\overline{\Omega})$ for some $\alpha\in(0,1),$ and \eqref{ret:sch} holds.

Now without loss of generality assume that $\Omega$ is a hyperrectangle of the form $\Omega=\prod_{i=1}^N[0,l_i]$ for some $l_i>0,$ $i=1,\ldots, N$. 
Let $\bar{u}, \bar{v}\in H^1_0 (\Omega)$ be weak solutions of \eqref{sys}. 
Using odd reflections and the Dirichlet boundary conditions we extend $\bar u$ and $\bar v$ to $\tilde \Omega=\prod_{i=1}^N[-l_i,2l_i]$ and obtain
weak solutions of \eqref{sys} (with the odd extension of $f$ as well) defined in $\tilde \Omega$.
Then interior regularity \cite[Theorem 1 Sec 4 Ch 9]{krylov:2008} implies that for any $\overline{\Omega_1} \subset \Omega_2 \subset \tilde \Omega$
\begin{equation}\label{itr}
\|\bar u\|_{W^{4,p}(\Omega_1)}\leq C(\|f\|_{L^\infty(\Omega_2)} + \|\bar u\|_{L^2(\Omega_2)}) \,.
\end{equation}
Note that we replaced $L^p$  by $L^2$ on the right hand side, which can be done using Sobolev embeddings and iteration in \eqref{itr}. Also, $\|\bar u\|_{L^2(\tilde \Omega)} \leq C \|f\|_{L^2(\tilde \Omega)}$ by testing \eqref{sys}
by $\bar v$ and $\bar u$ respectively and using standard estimates. 

As before, $u\equiv \bar u$ in $\Omega$ by integration by parts and uniqueness of weak solutions. 
Finally, \eqref{ret:sch} follows analogously from interior Schauder estimates \cite[Thm. 7.11]{krylov:1996}.

\end{proof}

\begin{lemma}\label{l:reg:2}
Let $\Omega$ be a smooth domain, $\beta, \gamma > 0$, and let $u \in  H \cap L^{\infty}(\Omega)$ be a weak solution of $\gamma \Delta^2 u - \beta \Delta u = u - u^3$ in $H$. For every $p>1$ there is $\gamma_0=\gamma_0(\beta, p, \Omega)>0$ and $C = C(\beta, p, \Omega)$ such that if $\gamma\in(0,\gamma_0)$ then
\begin{align}\label{reg:est}
\|u\|_{W^{6, p}(\Omega)} \leq C ( 1 + \|u\|^9_{L^{\infty}(\Omega)}).
\end{align}
\end{lemma}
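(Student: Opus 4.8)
The plan is to bootstrap regularity for the rescaled fourth-order operator, being careful that the constants depend only on $\beta$, $p$, and $\Omega$ and not on $\gamma$ (once $\gamma$ is small). The key structural observation is the one already used in Lemma \ref{l:reg}: the operator $\gamma \Delta^2 - \beta \Delta$ factors, so with $w := -\gamma \Delta u + \beta u$ we have $-\Delta w = u - u^3$, i.e. $w$ solves a \emph{second-order} equation whose right-hand side is controlled by $\|u\|_{L^\infty}$, while $u$ solves $-\gamma \Delta u = w - \beta u$. First I would apply Lemma \ref{l:reg} with $f = u - u^3 \in L^\infty(\Omega)$ to get $u \in W^{4,p}(\Omega) \cap C^3(\overline\Omega)$ for every $p$, hence $u \in C^{3,\alpha}(\overline\Omega)$, so in particular $u - u^3 \in C^{0,\alpha}(\overline\Omega)$ and the Schauder part of Lemma \ref{l:reg} gives $u \in C^{4,\alpha}(\overline\Omega)$ with $\Delta u = 0$ on $\partial\Omega$. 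This already gives enough smoothness that differentiating the equation makes sense; the point of the rest is to quantify the $W^{6,p}$ bound with a $\gamma$-independent constant and with the explicit ninth-power dependence on $\|u\|_{L^\infty}$.

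The quantitative step is where the $\gamma$-uniformity must be tracked. I would apply elliptic regularity to $-\Delta w = u - u^3$ first: since $\|u - u^3\|_{L^\infty} \le \|u\|_{L^\infty} + \|u\|_{L^\infty}^3$, standard $L^p$ estimates give $\|w\|_{W^{2,p}} \le C(p,\Omega)(\|u\|_{L^\infty} + \|u\|_{L^\infty}^3)$ with $C$ \emph{independent of $\gamma$} because this equation has no $\gamma$ in it (here one uses $w = \Delta u = 0$ on $\partial\Omega$, which holds by the previous paragraph). Now to bound $u$ in $W^{6,p}$ I would differentiate $\gamma\Delta^2 u - \beta\Delta u = u - u^3$, or equivalently write the equation as $\gamma \Delta^2 u = \beta \Delta u + u - u^3$ and apply the $W^{4,p}$ estimate for $\gamma\Delta^2$ (from Lemma \ref{l:reg}, whose constant for the factored system $-\Delta\bar v + \tfrac{\beta}{\gamma}\bar v = \tilde f$, $-\gamma\Delta\bar u = \bar v$ is $\gamma$-uniform for $\gamma$ small — this is exactly the content flagged as "the crucial point is the dependencies of the constants"). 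The right-hand side for the $W^{2,p}$-into-$W^{6,p}$ estimate is $\beta\Delta u + u - u^3$, whose $W^{2,p}$ norm I must control: $\|\Delta u\|_{W^{2,p}} = \|u\|_{W^{4,p}}$, and $\|u^3\|_{W^{2,p}} \lesssim \|u\|_{L^\infty}^2 \|u\|_{W^{2,p}}$ by the algebra/Leibniz estimate. Iterating once more ($W^{4,p}$ controlled by $W^{2,p}$ controlled by $L^\infty$ via the factored system, each step costing a factor of roughly $(1+\|u\|_{L^\infty}^2)$) and combining, the worst power is $\|u\|_{L^\infty}^3 \cdot \|u\|_{L^\infty}^2 \cdot \|u\|_{L^\infty}^2 \lesssim \|u\|_{L^\infty}^9$, plus lower-order terms absorbed into $C(1 + \|u\|_{L^\infty}^9)$; choosing $\gamma_0$ so small that all the factored-system estimates are uniform gives \eqref{reg:est}.

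The main obstacle I anticipate is precisely the $\gamma$-uniformity of the constant in the $W^{2k,p}$ estimates for $\gamma\Delta^2 - \beta\Delta$. The naive estimate from Lemma \ref{l:reg} has a constant $C(\beta,\gamma,p,\Omega)$ that could blow up as $\gamma \to 0$ (the operator degenerates). The resolution is to exploit the factorization: the equation $-\Delta\bar v + \tfrac{\beta}{\gamma}\bar v = f$ with the \emph{large} zeroth-order coefficient $\beta/\gamma$ actually has $L^p$ estimates that \emph{improve} as $\gamma \to 0$ (one gains from the coercive lower-order term), so $\|\bar v\|_{W^{2,p}} \le C(\beta,p,\Omega)\|f\|_{L^p}$ uniformly; and then $-\gamma\Delta\bar u = \bar v$ reads $-\Delta \bar u = \bar v/\gamma$, which looks bad, but combined with the $1/\gamma$ gained in the $\bar v$ step (the solution $\bar v$ of $-\Delta\bar v + \tfrac\beta\gamma \bar v = f$ satisfies $\|\bar v\|_{L^\infty} \le \tfrac{\gamma}{\beta}\|f\|_{L^\infty}$ by maximum principle, i.e. $\bar v/\gamma$ stays bounded) the net estimate for $\bar u$ is $\gamma$-uniform. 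Making this chain of cancellations rigorous and checking it propagates through two more levels of differentiation to reach $W^{6,p}$ is the technical heart; once that is in place, the polynomial bookkeeping yielding the exponent $9$ is routine.
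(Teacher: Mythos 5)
Your plan runs into a genuine quantitative obstruction, and the "chain of cancellations" you propose does not close at the level you need. The $L^\infty$ gain $\|\bar v\|_{L^\infty}\le\tfrac{\gamma}{\beta}\|f\|_{L^\infty}$ (and the corresponding $L^p$ gain) does yield a $\gamma$-uniform bound $\|\bar u\|_{W^{2,p}}\le C\|f\|_{L^p}$, but it stops there: for $W^{4,p}$ you would need $\|\bar v\|_{W^{2,p}}\le C\gamma\|f\|_{L^p}$, which is false. The solution of $-\Delta \bar v+\tfrac{\beta}{\gamma}\bar v=f$ develops a boundary layer of width $\sim\sqrt{\gamma/\beta}$, and a one-dimensional computation shows $\|\bar v\|_{W^{2,p}}\sim\|f\|_{L^p}$ with no extra $\gamma$; consequently $\|\Delta u\|_{W^{2,p}}=\|\bar v/\gamma\|_{W^{2,p}}\sim\gamma^{-1}\|f\|_{L^p}$ blows up. So the claim that ``the $W^{4,p}$ estimate for the factored system is $\gamma$-uniform for $\gamma$ small'' is wrong; indeed Lemma \ref{l:reg} explicitly records the constant as $C(\beta,\gamma,p,\Omega)$, and the whole content of Lemma \ref{l:reg:2} is that only the $W^{2,p}$ level survives uniformly and one must find another route to $W^{6,p}$. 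A secondary, related problem is your choice $w:=-\gamma\Delta u+\beta u$: while $-\Delta w=u-u^3$ does give $w\in W^{2,p}$ $\gamma$-uniformly, the back-solve $-\gamma\Delta u=w-\beta u$ is tautological ($w-\beta u$ is literally $-\gamma\Delta u$), so it produces no $\gamma$-uniform bound on $u$ itself. The paper instead takes $v:=-\gamma\Delta u$ (no $+\beta u$), keeps the $-\beta\Delta u$ term in the equation, and estimates $u$ from the honest second-order Poisson problem $-\beta\Delta u=u-u^3+\Delta v$ once $\|v\|_{W^{2,p}}$ is controlled.

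The idea the proposal is missing is the iteration that replaces the unavailable higher-order $\gamma$-uniform estimates. One shows that $\Delta u$ and $\Delta^2 u$ each solve the \emph{same} Navier problem $\gamma\Delta^2(\cdot)-\beta\Delta(\cdot)=\Delta^j(u-u^3)$, $j=1,2$, and then applies the $\gamma$-uniform $W^{2,p}$ estimate to each. The delicate point is that these iterates genuinely satisfy Navier boundary conditions: $\Delta u=0$ on $\partial\Omega$ is given, and $\Delta(\Delta u)=\Delta^2 u=\gamma^{-1}(u-u^3+\beta\Delta u)=0$ on $\partial\Omega$ because $u$, $\Delta u$, and hence $u-u^3$ all vanish there; the analogous computation for $\Delta^2 u$ uses that $\Delta(u-u^3)=(1-3u^2)\Delta u-6u|\nabla u|^2$ also vanishes on $\partial\Omega$. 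The iteration halts precisely at $W^{6,p}$ because $\Delta^2(u-u^3)$ does \emph{not} vanish on $\partial\Omega$, so no further boundary condition can be extracted. You should also re-examine the exponent bookkeeping: $\|u\|_{L^\infty}^3\cdot\|u\|_{L^\infty}^2\cdot\|u\|_{L^\infty}^2$ is $\|u\|_{L^\infty}^7$, not $\|u\|_{L^\infty}^9$; the paper's exponent $9$ comes from the $L^p$ bounds on $\Delta(u-u^3)$ and $\Delta^2(u-u^3)$ estimated through $\|u\|_{W^{2,p}}\lesssim 1+\|u\|_{L^\infty}^3$, without Gagliardo--Nirenberg interpolation.
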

\begin{proof} 
In this proof, $C$ denotes different positive constants which depend on $\Omega, \beta$, and $p$, but are \emph{independent} of $\gamma$. 

Since $u \in L^\infty (\Omega)$ we have by bootstrap and Lemma \ref{l:reg} that $u\in C^{4, \alpha}(\overline{\Omega})$ with $\Delta u \in C^0(\overline{\Omega})$. Then 
$(u, v)$ with
$v=-\gamma\Delta u$ and $f = u - u^3$
solves \eqref{sys} in the classical sense. By \cite[Ch. 8. Sec. 5. Thm 6]{krylov:2008}, there is $\gamma_0=\gamma_0(\beta, p, \Omega)>0$ such that, for every $\gamma\in(0,\gamma_0)$,
\begin{align*}
\|v\|_{W^{2, p}(\Omega)} &\leq C (1 + \|u\|^3_{L^{\infty}(\Omega)}) \,.
\end{align*}
On the other hand, $-\beta \Delta u = u - u^3 - \gamma \Delta^2 u = u - u^3 + \Delta v$ and by \cite[Lemma 9.17]{gilbarg:2001} for every $p\in(0,\infty)$ we have
\begin{equation}\label{ueq}
 \|u\|_{W^{2, p}(\Omega)} \leq C \|u-u^3 + \Delta v\|_{L^p(\Omega)} \leq C (1 + \|u\|^3_{L^{\infty}(\Omega)})\,.
\end{equation}
In particular, $\|\Delta u\|_{L^p (\Omega)} \leq C (1 + \|u\|^3_{L^{\infty}(\Omega)}).$ Set $w=\Delta u.$ Since $u\in C^{4,\alpha}(\overline{\Omega}),$ we have that $w$ is a weak solution of  
\begin{align*}
\gamma \Delta^2 w - \beta \Delta w = f_1 := \Delta (u-u^3) =  w - 6u|\nabla u|^2 - 3u^2 w \quad \text{ in }\Omega
\end{align*}
with $w = 0$ on $\partial \Omega$. Moreover, $\Delta w = \Delta^2 u = \frac{1}{\gamma} (u - u^3 + \beta \Delta u) = 0$ on $\partial \Omega$, since 
$u \in  C^{4, \alpha}(\overline{\Omega})$ and $u = \Delta u = 0$. Observe that $\|f_1\|_{L^p (\Omega)} \leq C (1 + \|u\|^7_{L^{\infty}(\Omega)}).$ Then we can repeat the argument for $w$ and $f_1$ instead of $u$ and $f$ respectively to obtain that $w\in C^{4,\alpha}(\overline{\Omega})$ and 
\begin{equation}\label{weq}
\|w\|_{W^{2, p}(\Omega)} \leq C(1 + \|u\|^7_{L^{\infty}(\Omega)}) \,.
\end{equation}
Then $\xi := \Delta w\in C^{2, \alpha} (\overline{\Omega}),$ $\|\xi\|_{L^p (\Omega)} \leq C (1 + \|u\|^7_{L^{\infty}(\Omega)}),$ and $\xi$ is a weak solution of $\gamma \Delta^2 \xi - \beta \Delta \xi = f_2 := \Delta f_1$ and $\xi = 0$ on $\partial \Omega$. Additionally, $\Delta \xi = \Delta^2 w = \frac{1}{\gamma} (f_1 + \beta \Delta w)= 0$ on $\partial \Omega,$ where we  used $u = \Delta u = w = \Delta w$ on $\partial \Omega$. Here we are fundamentally using that $f_1$ vanishes on $\partial \Omega$.  Note that $\|f_2\|_{L^p (\Omega)} \leq C (1 + \|u\|^9_{L^{\infty}(\Omega)})$. Thus we can iterate the above procedure one more time with $u$ and $f$ replaced by $\xi$ and $f_2$ respectively, to obtain
\begin{equation}\label{xieq}
\|\xi\|_{W^{2, p} (\Omega)} \leq C(1 + \|u\|^9_{L^{\infty}(\Omega)}) \,.
\end{equation}
 Note that we cannot iterate anymore since $f_2$ is not vanishing on 
 $\partial \Omega$, and therefore we cannot obtain boundary conditions for higher derivatives. Finally, \eqref{reg:est} follows by \eqref{ueq}, \eqref{weq}, \eqref{xieq}, and \cite[Ch.9 Sec. 2 Thm 3]{krylov:2008}.
 
\end{proof}

\section{A priori bounds}\label{a:priori:section}
For any $\beta > 0$ we fix the following constants for the rest of the paper
\begin{equation}\label{beta:constants}
\begin{aligned}
C_\beta:=\sqrt{\frac{4+\beta^2}{4}},\quad M_\beta:=\frac{1}{\beta^2}\bigg(\frac{4+\beta^2}{3}\bigg)^{\frac{3}{2}},\quad K_0:=\sqrt{\frac{8}{\sqrt{27}-2}}\approx 1.58.
\end{aligned}
\end{equation}
\begin{remark}\label{rmk:csq} We now state some facts that will be useful later in the proofs.
\begin{itemize}
 \item[i)] $C_\beta>1$ is the unique positive root of $h(s)=\frac{4}{\beta^2}(s-s^3)+s$ (cf. Lemma \ref{bounds:lemma}). In particular, $h>0$ in $(0,C_\beta).$
 \item[ii)] $M_\beta\geq 1$ is the maximum value of $h$ in $(0,\infty)$
 \item[iii)] If $\beta \geq \sqrt{8}$ then $h$ is increasing on $(0, 1)$.
 \item[iv)] $M_\beta\leq C_\beta$ if $\beta\geq K_0.$
\end{itemize}
\end{remark}

\begin{lemma}\label{u:bound:lemma}
Let $\Omega\subset \mathbb R^N$ be a bounded domain, $\beta>0,$ and let $u$ be a classical solution of \eqref{efk:eq}.
\begin{enumerate}
\item [i)] If $u$ is nonnegative in $\Omega$, then $\|u\|_{L^\infty(\Omega)}\leq M_\beta.$
\item [ii)] If $\beta\geq \sqrt{8}$ and $u$ is nonnegative in $\Omega$, then $\|u\|_{L^\infty(\Omega)}\leq 1.$
\item [iii)] If $\beta \geq \sqrt{8}$ and $\|u\|_{L^\infty(\Omega)}< \sqrt{\frac{\beta^2}{2}+1},$ then $\|u\|_{L^\infty(\Omega)}\leq 1.$
\end{enumerate}
\end{lemma}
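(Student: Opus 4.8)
The plan is to derive everything from Lemma~\ref{bounds:lemma} applied with $f(s)=s-s^{3}$. For this $f$ we have $f(0)=0$, $f\le 0$ on $(1,\infty)$, $f\ge 0$ on $(-\infty,-1)$, and the auxiliary function of that lemma is precisely $g=h$, where $h(s)=\frac{4}{\beta^{2}}(s-s^{3})+s$ is the function of Remark~\ref{rmk:csq}. A classical solution $u$ of \eqref{efk:eq} belongs to $C^{4}(\Omega)\cap C_{0}(\overline\Omega)$ with $\Delta u\in C_{0}(\overline\Omega)$ and $u=\Delta u=0$ on $\partial\Omega$, so the hypotheses of Lemma~\ref{bounds:lemma} hold. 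I write $\overline u=\max_{\overline\Omega}u$ and $\underline u=\min_{\overline\Omega}u$, and note $\underline u\le 0\le\overline u$ since $u$ vanishes on $\partial\Omega$ (and the case $u\equiv 0$ is trivial in all three parts because $M_{\beta}\ge 1$).

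For i) and ii), nonnegativity of $u$ gives $\underline u=0$, so the first inequality in \eqref{bounds:2} becomes $\overline u\le\max_{[0,\overline u]}h$. Since $f\le 0$ on $(1,\infty)$, part~1 of Lemma~\ref{bounds:lemma} upgrades this to $\overline u\le\max_{[0,1]}h$. For i), Remark~\ref{rmk:csq}~ii) states $\max_{(0,\infty)}h=M_{\beta}$, hence $\|u\|_{L^{\infty}(\Omega)}=\overline u\le\max_{[0,1]}h\le M_{\beta}$. For ii), when $\beta\ge\sqrt 8$ Remark~\ref{rmk:csq}~iii) says $h$ is increasing on $(0,1)$, so $\max_{[0,1]}h=h(1)=1$ and $\|u\|_{L^{\infty}(\Omega)}\le 1$.

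For iii) the solution is not assumed signed, so I would first use the invariance of the equation and of $h$ under $u\mapsto -u$ to assume without loss of generality that $\|u\|_{L^{\infty}(\Omega)}=\overline u=:m$ (this does not affect the hypothesis $m<\sqrt{\beta^{2}/2+1}$). Suppose, for contradiction, that $m>1$. By \eqref{bounds:2} there is $s_{1}\in[\underline u,\overline u]$ with $h(s_{1})=\max_{[\underline u,\overline u]}h\ge\overline u=m$. If $s_{1}\ge 0$, then $\overline u\le h(s_{1})\le\max_{[0,\overline u]}h$, and exactly as in i)--ii) (using $f\le 0$ on $(1,\infty)$ and Remark~\ref{rmk:csq}~iii)) this forces $m=\overline u\le\max_{[0,1]}h=1$, a contradiction. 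If $s_{1}<0$, set $a=-s_{1}$; then $a\in(0,-\underline u\,]\subset(0,m]$, and $m\le h(s_{1})=-h(a)$ together with $m>1$ gives $h(a)<0$, so $a>C_{\beta}$ by Remark~\ref{rmk:csq}~i). Since $-h(a)=\tfrac{4a}{\beta^{2}}(a^{2}-C_{\beta}^{2})$ and $a\mapsto a(a^{2}-C_{\beta}^{2})$ is increasing on $(C_{\beta},\infty)$, while $a\le m$, we get $m\le-h(a)\le-h(m)=\tfrac{4m}{\beta^{2}}(m^{2}-C_{\beta}^{2})$; dividing by $m>0$ and using $C_{\beta}^{2}=\tfrac{\beta^{2}+4}{4}$ yields $m^{2}\ge C_{\beta}^{2}+\tfrac{\beta^{2}}{4}=\tfrac{\beta^{2}}{2}+1$, contradicting $m<\sqrt{\beta^{2}/2+1}$. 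Hence $m\le 1$.

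The only delicate part is iii): one has to split on the sign of the maximiser $s_{1}$ in \eqref{bounds:2}, and in the case $s_{1}<0$ observe that controlling $-h$ on $(C_{\beta},\infty)$ is exactly what produces the threshold $\sqrt{\beta^{2}/2+1}$. Parts i) and ii) are direct consequences of Lemma~\ref{bounds:lemma} combined with Remark~\ref{rmk:csq}.
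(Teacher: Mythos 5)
Your proof is correct and follows the paper's strategy: both rest entirely on Lemma~\ref{bounds:lemma} with $f(s)=s-s^3$ and on the observations about $h$ recorded in Remark~\ref{rmk:csq}; parts~i) and~ii) are identical to the paper's. The only real difference is in part~iii): the paper directly shows $g(s)<-s$ for $s\in\bigl(-\sqrt{\beta^2/2+1},\,0\bigr)$, which immediately forces $\max_{[-\overline u,\overline u]}g=\max_{[0,\overline u]}g$, whereas you run a contradiction argument splitting on the sign of the maximiser and bound $-h$ explicitly on $(C_\beta,\infty)$ via the identity $-h(a)=\tfrac{4a}{\beta^2}(a^2-C_\beta^2)$ and its monotonicity. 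The two computations encode the same fact about $h$ on the negative half-line, so this is a presentational variant rather than a new route; the paper's inequality is a little cleaner, while yours makes visible where the threshold $\sqrt{\beta^2/2+1}$ comes from.
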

\begin{proof}
We use the notation of Lemma \ref{bounds:lemma} with $f(s)=s-s^3.$ We prove claim $iii)$ first.  Assume without loss of generality that $\|u\|_{L^\infty(\Omega)}=\overline{u}< \sqrt{\frac{\beta^2}{2}+1}.$ By direct computation $g(s)<-s$ for $s\in(-\sqrt{\frac{\beta^2}{2}+1}\,,\,0)$.  Therefore 
\begin{align}\label{u:bound:eq}
\max\limits_{s\in [-\overline{u},0]}g(s)<\max\limits_{s\in [-\overline{u},0]}-s = \overline{u}. 
\end{align}
Moreover, by Lemma \ref{bounds:lemma}, $\overline{u}\leq \max\limits_{[-\underline{u},\overline{u}]}g\leq \max\limits_{[-\overline{u},\overline{u}]}g.$  Then, in virtue of \eqref{u:bound:eq}, we get $\overline{u}\leq\max\limits_{[0,\overline{u}]}g.$ It follows from Lemma \ref{bounds:lemma} and Remark \ref{rmk:csq} iii) that $\overline{u}\leq\max\limits_{[0,1]}g=1$ as claimed.

Now, let $\beta>0$ and assume $\underline{u}\geq 0$. By Lemma \ref{bounds:lemma} and Remark \ref{rmk:csq}, $\overline{u}\leq \max\limits_{[0,\infty]}g
= M_\beta$ and the first claim follows. If $\beta\geq \sqrt{8},$ then $g$ is nondecreasing in $[0,1]$ and, by Lemma \ref{bounds:lemma}, $\overline{u}\leq \max\limits_{[0,1]} \, g=g(1)=1.$ 

\end{proof}

\section{Bounds for the global minimizer}\label{bounds:section}

Recall the definition of $K_0$, $M_\beta$, and $C_\beta$ given in \eqref{beta:constants} and of $H$ given in \eqref{H:def}.

\begin{proposition}\label{bound:global:minimizers}
  Let $v$ be a global minimizer of \eqref{efk:eq} in $H$. Then 
  \begin{align*}
\|v\|_{L^{\infty}(\Omega)}\leq C_\beta\ \text{ if }\beta\geq K_0\quad \text{ and }\quad \|v\|_{L^{\infty}(\Omega)}\leq 1\ \text{ if }\beta \geq \sqrt{8}.
\end{align*}
\end{proposition}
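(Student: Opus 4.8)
The plan is to read ``global minimizer of \eqref{efk:eq} in $H$'' as a global minimizer $v$ of $J_\beta$ over $H$ (one exists: writing $F(s):=\tfrac{s^4}{4}-\tfrac{s^2}{2}\geq-\tfrac14$, the functional $J_\beta(u)=\tfrac12\|\Delta u\|_{L^2(\Omega)}^2+\tfrac\beta2\|\nabla u\|_{L^2(\Omega)}^2+\int_\Omega F(u)\,dx$ is coercive and weakly lower semicontinuous on $H$), and to reduce the statement to the single claim that $\|v\|_{L^\infty(\Omega)}\leq C_\beta$ whenever $\beta\geq K_0$. Granting the claim, the case $\beta\geq\sqrt8$ follows at once: then $\beta\geq\sqrt8>K_0$, so $\|v\|_{L^\infty(\Omega)}\leq C_\beta=\sqrt{1+\beta^2/4}<\sqrt{\beta^2/2+1}$; in particular $v-v^3\in L^\infty(\Omega)$, hence $v$ is a classical solution of \eqref{efk:eq} by Lemma \ref{l:reg}, and Lemma \ref{u:bound:lemma} iii) gives $\|v\|_{L^\infty(\Omega)}\leq1$.

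To prove the claim I would pass to a \emph{truncated problem}, since, as stressed in the Introduction, one cannot truncate $v$ itself inside $H^2(\Omega)$. Let $\tilde F\in C^1(\R)$ be the even function equal to $F$ on $[-C_\beta,C_\beta]$ and equal, outside this interval, to the tangent line of $F$ at $\pm C_\beta$. Since $C_\beta>1$, $F$ is strictly convex on $\{|s|\geq C_\beta\}$, so $\tilde F\leq F$ on $\R$ with equality precisely on $[-C_\beta,C_\beta]$ and strict inequality on $\{|s|>C_\beta\}$; moreover $\tilde f:=-\tilde F'$ is bounded, Lipschitz, $\tilde f(0)=0$, and $\tilde f$ agrees with $f(s)=s-s^3$ on $[-C_\beta,C_\beta]$. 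The functional $\tilde J_\beta(u):=\tfrac12\|\Delta u\|_{L^2(\Omega)}^2+\tfrac\beta2\|\nabla u\|_{L^2(\Omega)}^2+\int_\Omega\tilde F(u)\,dx$ is again coercive and weakly lower semicontinuous on $H$ ($\tilde F$ is bounded below and $H\hookrightarrow\hookrightarrow L^2(\Omega)$), so it admits a global minimizer $\tilde v\in H$; since $\tilde f(\tilde v)\in L^\infty(\Omega)$, Lemma \ref{l:reg} yields $\tilde v\in C^4(\overline\Omega)$ with $\Delta\tilde v\in C_0(\overline\Omega)$, and $\tilde v$ solves $\Delta^2\tilde v-\beta\Delta\tilde v=\tilde f(\tilde v)$ in $\Omega$ classically.

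Next I would apply Lemma \ref{bounds:lemma} to $\tilde v$ with $\tilde f$ in the role of $f$. The associated function $\tilde g(s)=\tfrac{4}{\beta^2}\tilde f(s)+s$ coincides with $h$ (from Remark \ref{rmk:csq}) on $[-C_\beta,C_\beta]$ and, using $h(C_\beta)=0$ and oddness, equals $s-C_\beta$ for $s>C_\beta$ and $s+C_\beta$ for $s<-C_\beta$. Set $m:=\|\tilde v\|_{L^\infty(\Omega)}$ and suppose $m>C_\beta$. Using Remark \ref{rmk:csq} i)--ii) (so that $\max_{[0,C_\beta]}h=\max_{[0,\infty)}h=M_\beta$, as $h\leq0$ on $[C_\beta,\infty)$ while $M_\beta\geq1$), the explicit form of $\tilde g$ gives $\max_{[-m,m]}\tilde g=\max\{M_\beta,\,m-C_\beta\}<m$, because $m-C_\beta<m$ and, by Remark \ref{rmk:csq} iv) together with $\beta\geq K_0$, $M_\beta\leq C_\beta<m$. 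Since $[\underline{\tilde v},\overline{\tilde v}]\subseteq[-m,m]$ and $\tilde g$ is odd, \eqref{bounds:2} forces $m\leq\max_{[-m,m]}\tilde g<m$, a contradiction. Hence $\|\tilde v\|_{L^\infty(\Omega)}\leq C_\beta$, so $\tilde F(\tilde v)=F(\tilde v)$ and $J_\beta(\tilde v)=\tilde J_\beta(\tilde v)$. Comparing energies, using $\tilde F\leq F$, minimality of $v$ for $J_\beta$, minimality of $\tilde v$ for $\tilde J_\beta$, and the last identity,
\[
J_\beta(v)\ \leq\ J_\beta(\tilde v)\ =\ \tilde J_\beta(\tilde v)\ \leq\ \tilde J_\beta(v)\ \leq\ J_\beta(v),
\]
so all of these are equalities; in particular $\int_\Omega\bigl(F(v)-\tilde F(v)\bigr)\,dx=0$. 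As $F-\tilde F\geq0$ on $\R$ with strict inequality on $\{|s|>C_\beta\}$, this forces $|v|\leq C_\beta$ a.e.\ in $\Omega$, i.e.\ $\|v\|_{L^\infty(\Omega)}\leq C_\beta$. This proves the claim and hence the proposition.

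The main obstacle is not a single computation but the structural point that the truncation must be performed on the \emph{potential} $F$ rather than on $v$, together with the extraction of the a priori bound $\|\tilde v\|_{L^\infty(\Omega)}\leq C_\beta$ from Lemma \ref{bounds:lemma}; the latter is exactly where $\beta\geq K_0$ is used, via the identity $h(C_\beta)=0$ and the inequality $M_\beta\leq C_\beta$ of Remark \ref{rmk:csq} iv). The remaining ingredients --- existence and regularity of $\tilde v$, the convexity inequality $\tilde F\leq F$, and the four-term energy comparison --- are routine.
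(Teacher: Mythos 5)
Your proof is correct and takes essentially the same route as the paper: truncate the potential at $\pm C_\beta$ (your tangent-line modification of $F$ produces exactly the constant truncation of $f$ the paper uses), minimize the truncated functional, extract the $L^\infty$ bound for the truncated minimizer from Lemma~\ref{bounds:lemma} using Remark~\ref{rmk:csq}, transfer the bound to $v$ by energy comparison, and finish the case $\beta\geq\sqrt{8}$ via Lemma~\ref{u:bound:lemma}~iii). The only cosmetic difference is that where the paper invokes claim~2 of Lemma~\ref{bounds:lemma} and oddness to bound $\underline{u}$ from below, you directly compute $\max_{[-m,m]}\tilde g$ and reach the same contradiction; both variants are valid and rest on the same facts.
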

When $\beta\in(0,\sqrt{8})$ we expect that global minimizers are not bounded by 1 in big enough domains, see Figure \ref{figure1} and Figure \ref{figure2}.  

\begin{proof}
Assume $\beta\geq K_0$, let $J_\beta$ be as in \eqref{J0:definition}, and let $v$ denote a global minimizer in $H,$ i.e. $J_\beta(v)\leq J_\beta(w)$ for all $w\in H.$ We show first that $v\in L^\infty(\Omega)$. Let
\begin{align*}
f:\R\to\R,\ \  f(s):=\begin{cases}
	s-s^3& \text{ if }s\in [-C_\beta,C_\beta],\\
	\sign(s) (C_\beta-C_\beta^3) = -\frac{\beta^2}{4} \sign(s)C_\beta & \text{ otherwise },
        \end{cases}
\end{align*}
and let 
\begin{align}\label{J:plus:functional}
 J:H\to \R;\qquad J(u)&:=\int_\Omega \frac{|\Delta u|^2}{2} + \beta \frac{|\nabla u|^2}{2} - F(u) \, dx,
\end{align}
where $F(s):=\int_0^s f(t)\, dt.$  Let $u$ be the global minimizer of $J$ in $H.$ By Lemma \ref{l:reg}, $u$ 
is a classical solution of 
\begin{equation}\label{gamma:equation:plus}
\begin{aligned}
 \Delta^2 u -\beta \Delta u &= f(u)&&\quad \text{ in }\Omega,\\
 u=\Delta u&=0 &&\quad \text{ on }\partial \Omega.
\end{aligned}
\end{equation}
Using Lemma \ref{bounds:lemma} (and its notation), since $g\geq 0$ in $(0,\infty)$, we have $\underline{u}\geq\min\limits_{[\underline{u},\overline{u}]}g=\min\limits_{[\underline{u},0]}g$,
and consequently $\underline{u}\geq \min\limits_{[-1,0]}g\geq-M_\beta$. If $\beta\geq \sqrt{8}$, we additionally have
$\underline{u} \geq \min\limits_{[-1,0]}g=-1$.

Replacing $u$ by $-u$ and noting that $\beta \geq K_0$ we conclude $\|u\|_{L^{\infty}(\Omega)}\leq M_\beta \leq C_{\beta}$ 
and $\|u\|_{L^{\infty}(\Omega)}\leq 1$ if $\beta \geq \sqrt{8}$. In particular $f(u(x)) = u(x) - u^3(x)$ for $x\in\Omega$, and therefore 
\begin{equation*}
J(u)= J_\beta(u) \geq J_\beta(v) \geq J(v)\,,
\end{equation*}
where the last inequality is strict if $\|v\|_{L^{\infty}(\Omega)}>C_\beta$, a contradiction to the minimality of $u$. Thus $v\in L^\infty(\Omega)$ and, by Lemma \ref{l:reg}, $v$ is a classical solution of \eqref{gamma:equation:plus}.  If $\beta \geq \sqrt{8}$ and $\|v\|_{L^{\infty}(\Omega)}>1$, then, by Lemma \ref{u:bound:lemma} part $iii),$ $\|v\|_{L^{\infty}(\Omega)}\geq \sqrt{\frac{\beta^2}{2}+1}>C_\beta$ and we obtain a contradiction as above.

\end{proof}

\section{Existence of positive solutions}\label{positive:solutions:section}

Recall the definition of $K_0$, $M_\beta$, and $C_\beta$ given in \eqref{beta:constants}.

\begin{lemma}\label{u:bound:lemma:plus}
Let $\Omega\subset \mathbb R^N$ be a bounded domain, $\beta>0,$ 
\begin{align}\label{f:definition}
f:\R\to\R; \qquad f(s):=\begin{cases}
	-\frac{\beta^2}{4}s &\quad \text{ if }s<0,\\
	s-s^3& \quad \text{ if }s\in [0,C_\beta],\\
	C_\beta-C_\beta^3 = -\frac{\beta^2}{4}C_{\beta} &\quad \text{ if }s>C_\beta,
        \end{cases}
\end{align}
and let $u$ be a classical solution of \eqref{gamma:equation:plus} for this choice of $f$. Then $0\leq u \leq M_\beta.$ Moreover, if $\beta\geq \sqrt{8},$ then $u\leq 1.$ In particular, if $\beta\geq K_0$, then $u$ is a classical solution of \eqref{efk:eq}.
\end{lemma}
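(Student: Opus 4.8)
The plan is to invoke the a priori bound machinery of Lemma~\ref{bounds:lemma} for the truncated nonlinearity $f$ from \eqref{f:definition} and to extract the three assertions from the elementary behaviour of $g(s):=\frac{4}{\beta^2}f(s)+s$, most of which is already recorded (for the central branch) in Remark~\ref{rmk:csq}.

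First I would describe $g$ on each of its three branches. For $s\leq 0$ one has $f(s)=-\frac{\beta^2}{4}s$, hence $g(s)\equiv 0$; for $s\in[0,C_\beta]$ one has $g(s)=h(s)=\frac{4}{\beta^2}(s-s^3)+s$, which by Remark~\ref{rmk:csq}(i)--(ii) is nonnegative on $[0,C_\beta]$, vanishes at $C_\beta$, and has maximum $M_\beta$ over $(0,\infty)$, attained at an interior point of $(0,C_\beta)$; and for $s>C_\beta$ one has $f(s)=-\frac{\beta^2}{4}C_\beta$, so $g(s)=s-C_\beta$, an increasing positive affine function. In particular $g$ is continuous and $g\geq 0$ on all of $\R$. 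Since $u$ is a classical solution of \eqref{gamma:equation:plus}, it satisfies $u\in C^4(\Omega)\cap C_0(\bar\Omega)$ with $\Delta u\in C_0(\bar\Omega)$, so Lemma~\ref{bounds:lemma} applies with this $f$ (note $f(0)=0$).

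For nonnegativity I would use the second inequality in \eqref{bounds:2}: $\underline u\geq\min_{[\underline u,\overline u]}g\geq 0$ because $g\geq 0$ everywhere, so $u\geq 0$ and hence $[\underline u,\overline u]\subseteq[0,\overline u]$. For the bound $u\leq M_\beta$, the first inequality in \eqref{bounds:2} then reads $\overline u\leq\max_{[0,\overline u]}g$; if $\overline u\leq C_\beta$ this gives $\overline u\leq\max_{[0,\infty)}h=M_\beta$, while if $\overline u>C_\beta$ then $\max_{[0,\overline u]}g=\max\{M_\beta,\overline u-C_\beta\}$ and, since $\overline u-C_\beta<\overline u$, again $\overline u\leq M_\beta$. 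For the refined bound when $\beta\geq\sqrt8$, I would check that $f\leq 0$ on $(1,\infty)$ (indeed $f(s)=s-s^3\leq0$ for $s\in(1,C_\beta]$ and $f(s)=-\frac{\beta^2}{4}C_\beta<0$ for $s>C_\beta$) and apply claim~1 of Lemma~\ref{bounds:lemma} to upgrade $\overline u\leq\max_{[0,\overline u]}g$ to $\overline u\leq\max_{[0,1]}g=\max_{[0,1]}h$; by Remark~\ref{rmk:csq}(iii) $h$ is increasing on $(0,1)$, so this maximum equals $h(1)=1$, whence $u\leq1$. Finally, if $\beta\geq K_0$ then $M_\beta\leq C_\beta$ by Remark~\ref{rmk:csq}(iv), so $0\leq u\leq M_\beta\leq C_\beta$, hence $f(u)\equiv u-u^3$ pointwise and $u$ solves \eqref{efk:eq} classically.

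The proof is essentially bookkeeping around Lemma~\ref{bounds:lemma}; the only slightly delicate point is the case $\overline u>C_\beta$ in the $M_\beta$-bound, where $g$ is the affine function $s-C_\beta$ rather than $h$ and one must observe that this branch admits no fixed point above $C_\beta$. I do not expect a genuine obstacle.
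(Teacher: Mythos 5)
Your proof is correct and follows essentially the same route as the paper: compute $g$ branch by branch, use $g\geq 0$ together with the lower bound in \eqref{bounds:2} to get $u\geq 0$, use the upper bound in \eqref{bounds:2} (plus claim 1 of Lemma~\ref{bounds:lemma} when $\beta\geq\sqrt8$) to get $u\leq M_\beta$ and $u\leq 1$, and conclude via Remark~\ref{rmk:csq}(iv). Your explicit case split on $\overline u\lessgtr C_\beta$ when deriving $\overline u\leq M_\beta$ is actually a touch more careful than the paper's terse identity $\max_{[0,\overline u]}g=\max_{[0,1]}g=M_\beta$ (which, read literally, is not valid for $\beta>\sqrt 8$ or $\overline u>C_\beta$), so it closes a small gap rather than opening one.
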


\begin{proof}
By Lemma \ref{bounds:lemma} (and using the same notation) $\underline{u}\geq \min\limits_{\R} g \geq 0$ by the definition of $C_\beta,$ that is, $u\geq 0$ in $\Omega.$ On the other hand, again by Lemma \ref{bounds:lemma} we have that $\overline{u}\leq \max\limits_{[0,\overline{u}]} \, g=\max\limits_{[0,1]} \, g=M_\beta.$ 

If $\beta\geq \sqrt{8},$ then $g$ is nondecreasing in $[0,1]$, and therefore $\overline{u}\leq \max\limits_{[0,\overline{u}]}g$ implies, by Lemma \ref{bounds:lemma}, that $\overline{u}\leq \max\limits_{[0,1]} \, g=g(1)=1.$ Finally, if $\beta\geq K_0,$ then $M_\beta\leq C_\beta,$ and thus $f(u(x))=u(x)-u(x)^3$ for all $x\in\Omega,$ that is, $u$ solves \eqref{efk:eq}.

\end{proof}

\begin{theorem}\label{existence:positive:solution}
Let $\beta>0$ and $\Omega\subset \mathbb \R^N$ with $N\geq 1$ be a smooth bounded domain or a hyperrectangle. If 
$\lambda_1^2 + \beta\lambda_1 \geq 1$, then $u\equiv 0$ is the unique weak solution of \eqref{efk:eq}.  If $\lambda_1^2+\beta\lambda_1 < 1$, then for $\beta\geq K_0$ there is a positive classical solution $u$ of \eqref{efk:eq} such that $\|u\|_{L^\infty(\Omega)} \leq M_\beta$ and $\Delta u < \frac{\beta}{2}u$ in $\Omega$. Additionally, if $\beta\geq \sqrt{8}$ then $\|u\|_{L^\infty(\Omega)}\leq 1$.
\end{theorem}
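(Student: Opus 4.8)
\textbf{Proof proposal for Theorem \ref{existence:positive:solution}.}

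The plan is to split into the two regimes dictated by the linearized operator at $0$. For the non-existence part, I would test the weak formulation of \eqref{efk:eq} against the solution $u$ itself to obtain $\int_\Omega |\Delta u|^2 + \beta|\nabla u|^2 + u^4\, dx = \int_\Omega u^2\, dx$. The variational characterization of $\lambda_1$ gives $\int_\Omega |\nabla u|^2 \geq \lambda_1\int_\Omega u^2$ for $u\in H_0^1(\Omega)$, and since $\Delta u$ also satisfies (in a suitable sense) a Poincar\'e-type inequality — more precisely, for $u\in H$ one has $\int_\Omega |\Delta u|^2\, dx \geq \lambda_1 \int_\Omega |\nabla u|^2\, dx \geq \lambda_1^2 \int_\Omega u^2\, dx$ — we deduce $(\lambda_1^2 + \beta\lambda_1)\int_\Omega u^2 \leq \int_\Omega u^2$, forcing $u\equiv 0$ when $\lambda_1^2 + \beta\lambda_1 \geq 1$. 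I expect this direction to be routine.

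For the existence part when \eqref{eq:mas} holds and $\beta \geq K_0$, the strategy is to replace the nonlinearity $s\mapsto s - s^3$ by the truncated function $f$ defined in \eqref{f:definition} and minimize the associated functional $J$ from \eqref{J:plus:functional} over $H$. First I would check that $J$ is coercive and weakly lower semicontinuous on $H$: the quadratic part $\int_\Omega \frac{|\Delta u|^2}{2} + \beta\frac{|\nabla u|^2}{2}\, dx$ controls the $H$-norm (again by the Poincar\'e-type chain above), and $F$ grows at most quadratically at infinity because $f$ is bounded, so the negative term is subcritical and weak convergence in $H$ suffices to pass to the limit. Hence a global minimizer $u$ exists. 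To see $u \not\equiv 0$, I would exhibit a test function with negative energy: take $\phi_1$ the first Dirichlet eigenfunction of $-\Delta$ and evaluate $J(t\phi_1)$ for small $t>0$; since $F(s) = \frac{s^2}{2} - \frac{s^4}{4}$ near $0$, the quadratic part of $J(t\phi_1)$ equals $\frac{t^2}{2}(\lambda_1^2 + \beta\lambda_1 - 1)\int_\Omega \phi_1^2 < 0$ by \eqref{eq:mas}, so $\inf J < 0 = J(0)$ and the minimizer is nontrivial.

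Next, by Lemma \ref{l:reg} the minimizer $u$ is a classical solution of \eqref{gamma:equation:plus} with this $f$, and Lemma \ref{u:bound:lemma:plus} gives $0 \leq u \leq M_\beta$, with $u\leq 1$ if $\beta\geq\sqrt{8}$; since $\beta\geq K_0$ implies $M_\beta \leq C_\beta$, the truncation is inactive, so $f(u) = u - u^3$ and $u$ solves \eqref{efk:eq}. It remains to upgrade $u\geq 0$, $u\not\equiv 0$ to strict positivity and to establish $\Delta u < \frac{\beta}{2}u$ in $\Omega$; here I would work with the intermediate function $w := -\Delta u + \frac{\beta}{2}u \in C^2(\Omega)\cap C_0(\bar\Omega)$ as in the proof of Lemma \ref{bounds:lemma}, which satisfies the second-order equation $-\Delta w + \frac{\beta}{2} w = f(u) + \frac{\beta^2}{4}u = u - u^3 + \frac{\beta^2}{4}u \geq 0$ in $\Omega$ (using $0\leq u\leq C_\beta$, so $u - u^3 \geq -\frac{\beta^2}{4}u$) with $w = 0$ on $\partial\Omega$. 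The strong maximum principle then forces either $w \equiv 0$ or $w > 0$ in $\Omega$; the former would give $-\Delta u = -\frac{\beta}{2}u$, i.e. $u$ an eigenfunction, which combined with $u\geq 0$, $u\not\equiv 0$ and $u=0$ on $\partial\Omega$ is impossible unless $-\beta/2$ is a Dirichlet eigenvalue, a contradiction since $\beta>0$. Hence $w > 0$, i.e. $\Delta u < \frac{\beta}{2}u$ in $\Omega$; feeding $-\Delta u = -\frac{\beta}{2}u + w > -\frac{\beta}{2}u$ and $u\geq 0$ back into the strong maximum principle for $-\Delta u + \frac{\beta}{2}u = w > 0$ yields $u > 0$ in $\Omega$. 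The main obstacle I anticipate is making the decoupling into the two second-order problems and the sign propagation fully rigorous — the fourth-order problem has no direct maximum principle, so all positivity must be routed through the auxiliary function $w$ and the system structure, exactly as Lemma \ref{bounds:lemma} foreshadows.
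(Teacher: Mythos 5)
Your proposal follows the same architecture as the paper's proof: the non-existence part via testing with $u$ and the Poincar\'e chain $\int|\Delta u|^2\geq\lambda_1\int|\nabla u|^2\geq\lambda_1^2\int u^2$, the existence part via the truncated functional $J$ from \eqref{J:plus:functional} with $f$ from \eqref{f:definition}, boundedness below (the paper uses $F(s)\leq\frac14$, you use a coercivity/w.l.s.c. argument -- same content), nontriviality via $J(\delta\varphi_1)<0$, the $L^\infty$ bounds via Lemma \ref{u:bound:lemma:plus} to deactivate the truncation, and strict positivity of $u$ and $w=-\Delta u+\frac\beta2 u$ through the second-order cooperative decomposition and the strong maximum principle. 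All of this matches.

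There is one genuine gap in the middle: you write ``by Lemma \ref{l:reg} the minimizer $u$ is a classical solution,'' but Lemma \ref{l:reg} requires the right-hand side to be in $L^\infty(\Omega)$. For the $f$ of \eqref{f:definition}, $f(s)=-\frac{\beta^2}{4}s$ for $s<0$ is unbounded, so $f(u)\in L^\infty$ requires $u$ bounded below, and $u\in H=H^2\cap H^1_0$ does \emph{not} give $u\in L^\infty$ once $N\geq 4$. The paper closes this with the line ``Arguing as in Proposition \ref{bound:global:minimizers} we have that $u\in L^\infty(\Omega)$,'' i.e.\ one introduces a second, \emph{globally bounded} truncation, minimizes that auxiliary functional, gets a classical bounded minimizer by Lemma \ref{l:reg} and Lemma \ref{bounds:lemma}, and then compares energies to conclude the original minimizer must also satisfy the bound; alternatively an $L^p$ bootstrap using the linear growth of $f$ does the job. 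Either way, this intermediate step is needed before Lemma \ref{l:reg} can be invoked and is missing from your write-up. A second, much smaller point: in the non-existence case you drop the $u^4$ term too early, which loses the borderline case $\lambda_1^2+\beta\lambda_1=1$; keep $\int u^4>0$ to get the strict inequality as the paper does.
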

\begin{proof}
Let $\lambda_1^2+\beta\lambda_1 \geq 1$ and assume by contradiction that there is a nontrivial weak solution $u \in H$ of \eqref{efk:eq}. Testing equation \eqref{efk:eq} with $u$ yields
 \begin{align*}
  0=\int_\Omega |\Delta u|^2+\beta|\nabla u|^2+u^4-u^2\, dx> (\lambda_1^2+\beta\lambda_1-1) \int_\Omega u^2 \, dx\geq 0,
 \end{align*}
by the Poincar\' e inequality, a contradiction.

Now, assume that $\lambda_1^2+\beta\lambda_1 < 1$, let $f$ be as in \eqref{f:definition}, and $J$, $F$ as in \eqref{J:plus:functional} with this choice of $f$. Note that $F(s)\leq \frac{1}{4}$ for all $s\in\R.$ Thus $ J(u)\geq -\frac{|\Omega|}{4}$ for all $u\in H.$  Standard arguments show that $J$ attains a global minimizer $u$ in $H$ and that $u$ is a weak solution of $\Delta^2 u - \beta \Delta u = f(u)$ in $\Omega$ with Navier boundary conditions. Observe that $u\not\equiv 0$, since $\lambda_1^2+\beta\lambda_1 < 1$ implies 
$J(\delta \varphi_1) < 0$ for sufficiently small $\delta > 0$, where $\varphi_1$ is the first Dirichlet eigenfunction of the Laplacian in $\Omega$.

Arguing as in Proposition \ref{bound:global:minimizers} we have that $u\in L^\infty(\Omega)$, and therefore $u\in C^4(\Omega)\bigcap C(\overline{\Omega})$ and $\Delta u\in C_0(\Omega)$, by Lemma \ref{l:reg}. Then $u$ is a classical solution of \eqref{efk:eq} satisfying $0\leq u\leq M_\beta$ if $\beta \geq K_0$, and $0\leq u\leq1$ if $\beta\geq \sqrt{8}$, by Lemma \ref{u:bound:lemma:plus}.

The strict positivity of $u$ (recall $u \not \equiv 0$) and $-\Delta u +\frac{\beta}{2}u$ is a consequence of the maximum principle for second order equations and the following decomposition into a second order system
\begin{equation*}
\begin{aligned}
 -\Delta u +\frac{\beta}{2}u= w,\  -\Delta w +\frac{\beta}{2}w= (1+\frac{\beta^2}{4})u-u^3 \ \text{ in }\Omega,\quad u=w=0 \ \text{ on }\partial\Omega,
\end{aligned}
\end{equation*}
where $(1+\frac{\beta^2}{4})u-u^3\geq 0$ since $0\leq u\leq M_\beta\leq C_\beta$ for $\beta \geq K_0$.
\end{proof}

\section{Stability of positive solutions}\label{section:stability}

In this section we prove the following result.

\begin{theorem}\label{stability:thm}
Let $\partial \Omega$ be of class $C^{1,1}$ and $\beta\geq \sqrt{8}$. Then any positive solution of \eqref{efk:eq} is strictly stable.
\end{theorem}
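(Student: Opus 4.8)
The plan is to exploit the decomposition of \eqref{efk:eq} into a cooperative second-order system and combine it with the pointwise bound $\|u\|_{L^\infty(\Omega)}\le 1$ guaranteed by Lemma \ref{u:bound:lemma} ii) for $\beta\ge\sqrt8$. Recall that the second variation at a positive solution $u$ is
\[
J_\beta''(u)[v,v]=\int_\Omega |\Delta v|^2+\beta|\nabla v|^2+(3u^2-1)v^2\,dx,\qquad v\in H.
\]
Since $\Delta^2-\beta\Delta$ factors as $(-\Delta+\tfrac\beta2)(-\Delta+\tfrac\beta2)-\tfrac{\beta^2}4$, for every $v\in H$ one has
\[
\int_\Omega |\Delta v|^2+\beta|\nabla v|^2\,dx=\int_\Omega \Bigl(-\Delta v+\tfrac\beta2 v\Bigr)^2\,dx-\tfrac{\beta^2}4\int_\Omega v^2\,dx=\|w\|_{L^2(\Omega)}^2-\tfrac{\beta^2}4\|v\|_{L^2(\Omega)}^2,
\]
where $w:=-\Delta v+\tfrac\beta2 v\in L^2(\Omega)$. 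Hence
\[
J_\beta''(u)[v,v]=\int_\Omega w^2+\Bigl(3u^2-1-\tfrac{\beta^2}4\Bigr)v^2\,dx.
\]
First I would note that since $0\le u\le 1$, the bad coefficient satisfies $3u^2-1-\tfrac{\beta^2}4\le 2-\tfrac{\beta^2}4\le 0$ for $\beta\ge\sqrt8$; so the only obstruction to strict positivity is controlling $\int v^2$ by $\int w^2$, i.e. an inequality of the form $\|v\|_{L^2}^2\le c\,\|w\|_{L^2}^2$ with $c\le \bigl(\tfrac{\beta^2}4+1-3\inf u^2\bigr)^{-1}$ — but a crude bound is not enough when $u$ is close to $1$ somewhere, so a more careful argument is needed.

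The key step is therefore to use the equation satisfied by $u$ itself. Writing $\varphi:=-\Delta u+\tfrac\beta2 u>0$ in $\Omega$ (positive by Theorem \ref{existence:positive:solution}, or directly from the maximum principle applied to the system), $u$ solves the cooperative system
\[
-\Delta u+\tfrac\beta2 u=\varphi,\qquad -\Delta\varphi+\tfrac\beta2\varphi=\Bigl(1+\tfrac{\beta^2}4\Bigr)u-u^3\quad\text{in }\Omega,
\]
with $u=\varphi=0$ on $\partial\Omega$. The standard way to pass from stability of a scalar fourth-order problem to spectral information is to rewrite $J_\beta''(u)[v,v]$ as a quadratic form on the pair $(v,w)$ and compare with the linearized system operator, whose principal eigenvalue I claim is positive. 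Concretely, I would consider the linearization of the above system at $(u,\varphi)$, namely the cooperative system with matrix potential built from $1+\tfrac{\beta^2}4$ and $-3u^2$, show it has a positive principal eigenvalue $\mu_1$ with a positive principal eigenfunction pair $(\phi,\psi)$ (both strictly positive in $\Omega$ by the strong maximum principle for cooperative systems), and then use this positive supersolution/eigenfunction as a test-weight: substituting $v=\phi\,\eta$ (or arguing via the logarithmic/Picone-type identity $\int |\nabla(\phi\eta)|^2\ge \int (-\Delta\phi/\phi)\,\phi^2\eta^2$) converts the quadratic form into a manifestly nonnegative expression plus a strictly positive zeroth-order term proportional to $\mu_1$, giving strict stability for all $v\not\equiv0$.

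The main obstacle I anticipate is the strictness and the correct bookkeeping of the cross-terms: turning "nonnegative" into "strictly positive" requires that $\mu_1>0$ be \emph{strict}, which in turn needs that $(u,\varphi)$ be not merely a positive solution of the system but that the linearized operator at it be nondegenerate — equivalently that $u$ is not itself a zero-energy direction. Here the hypothesis $\beta\ge\sqrt8$ enters decisively: it forces $3u^2-1\le 2<\tfrac{\beta^2}4+1$ so that $u$ is a strict subsolution of the relevant eigenvalue problem (since $3u^2$ is strictly smaller than the coefficient seen by $u$ in the system, namely $1+\tfrac{\beta^2}4$, on the set where $u>0$), which yields $\mu_1>0$ by the strict monotonicity of the principal eigenvalue in the zeroth-order coefficient. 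A secondary technical point is regularity: one needs $u\in C^{1,1}$ or at least $\Delta u$ continuous up to $\partial\Omega$ (provided by the $C^{1,1}$ assumption on $\partial\Omega$ together with Lemma \ref{l:reg}) so that $\varphi$ is an admissible positive function vanishing on the boundary and the integration by parts in the Picone identity is justified; I would carry this out first, then establish $\mu_1>0$, and finally assemble the quadratic-form inequality.
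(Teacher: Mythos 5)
Your overall plan (decompose into a cooperative second-order system, invoke Sweers/Mitidieri to get a positive principal eigenfunction, integrate against $u$) is the right circle of ideas and is close to the paper's, but the key choice you make — working directly with the linearized operator $\Delta^2-\beta\Delta+3u^2-1$ — creates a gap that the paper deliberately avoids. The paper's Proposition \ref{first:eigenvalue:is:zero} works instead with the operator $\Delta^2-\beta\Delta+(u^2-1)$, for which $u$ is an exact eigenfunction with eigenvalue $0$. The supersolution needed to apply Theorem \ref{sweers:thms} for this smaller zeroth-order coefficient is simply $(\varphi_1,\varphi_1)$, and the relevant conditions $\lambda_1+\tfrac{\beta}{2}-1>0$ and $\lambda_1+u^2-1+\tfrac{\beta}{2}>0$ hold for every $\beta\ge\sqrt8$ regardless of $\lambda_1$. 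One then shows $\mu_1=0$ by integrating the eigenvalue equation against $u$, and Theorem \ref{stability:thm} follows immediately since $J_\beta''(u)[v,v]$ equals that nonnegative form plus $2\int_\Omega u^2 v^2\,dx$, which is strictly positive for $v\not\equiv 0$ (after noting the infimum is attained).

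By contrast, for your linearized cooperative system the coefficient becomes $3u^2-1$ (after the split $3(u^2-1)$ to keep the off-diagonal nonpositive), and the supersolution condition from Theorem \ref{sweers:thms} then requires, as in the paper's own proof of Theorem \ref{theorem:radial:minimizers}, the existence of $\delta$ with $\tfrac{1}{\lambda_1+\beta/2}<\delta<\tfrac{\lambda_1+\beta/2}{3}$, i.e.\ $(\lambda_1+\tfrac\beta2)^2>3$, i.e.\ $\beta>\sqrt{12}-2\lambda_1$. This is strictly stronger than $\beta\ge\sqrt8$ whenever $\lambda_1<\sqrt3-\sqrt2$, so for large domains with $\beta$ close to $\sqrt8$ your appeal to Sweers' theorem cannot even get off the ground: you would not obtain a positive principal eigenfunction. (Note that your pointwise comparison $3u^2<1+\tfrac{\beta^2}{4}$ is not the supersolution condition — with the unshifted split it actually translates into $\lambda_1^2+\beta\lambda_1>1$, which contradicts the existence condition \eqref{eq:mas}.) Two smaller points: $u$ is a strict \emph{super}solution, not a subsolution, since $(\Delta^2-\beta\Delta+3u^2-1)u=2u^3>0$ in $\Omega$; and the Picone-type substitution $v=\phi\eta$ that you sketch at the end is not needed — once the principal eigenvalue of the fourth-order self-adjoint operator is identified with the Rayleigh-quotient infimum and shown to be positive, strict stability is immediate.

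To repair your proof you should adopt the paper's trick: apply the system/Sweers machinery to the \emph{solution} operator $\Delta^2-\beta\Delta+(u^2-1)$ rather than to the full linearization; this keeps the zeroth-order coefficient small enough that the supersolution $(\varphi_1,\varphi_1)$ works for all $\beta\ge\sqrt8$, gives $\mu_1=0$ from $\int u\,L v_1\,dx=0$, and then adds $2\int u^2v^2$ to conclude.
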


The proof of Theorem \ref{stability:thm} is an easy consequence of the following.

\begin{proposition}\label{first:eigenvalue:is:zero} Assume that $\beta\geq \sqrt{8}$, and let $u$ be a positive solution of \eqref{efk:eq}. Then,
\begin{align*} 
\mu_1=\inf_{v\in H\backslash\{0\}}\frac{\int_\Omega |\Delta v|^2+\beta |\nabla v|^2 +(u^2-1)v^2\, dx}{\int_\Omega v^2 \, dx}=0 \,.
\end{align*}
\end{proposition}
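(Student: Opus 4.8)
The plan is to show that $\mu_1=0$ by proving both $\mu_1 \leq 0$ and $\mu_1 \geq 0$. For the upper bound, recall that $u$ solves $\Delta^2 u - \beta \Delta u = u - u^3$, so testing this equation against $u$ itself and using $u=\Delta u = 0$ on $\partial\Omega$ gives
\begin{align*}
\int_\Omega |\Delta u|^2 + \beta |\nabla u|^2 + (u^2-1)u^2 \, dx = 0,
\end{align*}
hence using $v=u$ in the Rayleigh quotient yields $\mu_1 \leq 0$. The real work is the reverse inequality $\mu_1 \geq 0$, i.e. that the linearized operator $L := \Delta^2 - \beta\Delta + (u^2-1)$ with Navier boundary conditions is nonnegative on $H$.

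To prove $\mu_1 \geq 0$, I would exploit that the eigenvalue problem for $\mu_1$ is attained (by standard compactness in $H$, since the zeroth-order coefficient $u^2-1$ is bounded) by some first eigenfunction $\varphi \in H$, and that $\varphi$ can be taken to be a ground state. The key point is to show $\varphi$ does not change sign. Since the problem is fourth order, I cannot directly use $|\varphi|$ or $\varphi^\pm$ as competitors. Instead I would decompose the eigenvalue problem $L\varphi = \mu_1\varphi$ into the second-order system $-\Delta\varphi + \tfrac{\beta}{2}\varphi = \psi$, $-\Delta\psi + \tfrac{\beta}{2}\psi = (1+\tfrac{\beta^2}{4})\varphi - u^2\varphi + \mu_1\varphi$, exactly as in the proof of Theorem~\ref{existence:positive:solution}. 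Because $\beta \geq \sqrt{8} > K_0$ and $\|u\|_{L^\infty(\Omega)}\leq 1$ (Lemma~\ref{u:bound:lemma} ii), the coefficient $(1+\tfrac{\beta^2}{4}) - u^2 + \mu_1$ is positive (at least once we know $\mu_1 \geq$ something mild, or one argues with the cone structure directly), so the system is cooperative with a positivity-preserving structure; then a Krein–Rutman / maximum-principle argument for the associated second-order cooperative system forces the ground state $\varphi$ to have a sign, say $\varphi > 0$ in $\Omega$.

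Once $\varphi > 0$, I would compare $\varphi$ with the positive solution $u$ itself. The idea is the standard "division trick" adapted to the fourth-order system: $u$ satisfies $Lu = \Delta^2 u - \beta\Delta u + (u^2-1)u = u^2\cdot u - u^3 + \ldots$; more precisely $u$ is itself a positive supersolution (in fact solution) of the relevant second-order subsystems with the *same* principal part but zeroth-order coefficient involving $u^2-1$ rather than $3u^2-1$. Testing the $\varphi$-system against $u$ and the $u$-system against $\varphi$ and subtracting — using that $u-u^3 = (1-u^2)u \geq 0$ on one side and the structure of the cooperative system on the other — I would get $\mu_1 \int_\Omega u\varphi \, dx \geq 0$; since $u,\varphi>0$ this gives $\mu_1 \geq 0$, completing the proof.

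The main obstacle I anticipate is making the sign of the ground state rigorous in the fourth-order setting: one must verify that the decomposed second-order system genuinely has the positivity-preserving (Green's-function-positive) property on $H$, which requires $\beta/2 > 0$ (true) and that the lower-order coupling term has the right sign, and then invoke Krein–Rutman carefully for a ground state that a priori is only known to exist in $H$, not to be simple. A secondary subtlety is handling the case where $\mu_1$ might a priori be negative when checking the positivity of the coupling coefficient $(1+\tfrac{\beta^2}{4}) - u^2 + \mu_1$; this can be circumvented by first establishing a crude lower bound $\mu_1 \geq -(1+\tfrac{\beta^2}{4})$ (trivial, since $u^2-1 \geq -1$ and $\int |\Delta v|^2 + \beta|\nabla v|^2 \geq 0$), which is more than enough for the coefficient to stay positive given $\|u\|_\infty \leq 1$.
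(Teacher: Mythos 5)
Your proposal is correct and takes essentially the same route as the paper: decompose the linearized fourth-order operator into a cooperative second-order system, obtain a positive first eigenfunction from a Krein--Rutman / maximum-principle theorem for such systems (the paper cites Sweers and Mitidieri--Sweers, stated here as Theorem~\ref{sweers:thms}), and pair that eigenfunction against $u$ to conclude. Two small corrections are worth recording. First, the paper circumvents the circularity you flag---the principal eigenvalue appearing in the coupling coefficient---by posing a \emph{generalized} eigenvalue problem $(L+\widetilde M)(v,w)^{T}=\lambda_B B(v,w)^{T}$ with the weight matrix $B$ having the single nonzero entry $B_{21}=1$; the system matrix $\widetilde M$ then has fixed coefficients (not involving $\mu_1$) and one recovers $\mu_1=\lambda_B-\beta^2/4$ afterward. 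Your bootstrap also works, but the stated crude bound $\mu_1\geq -(1+\beta^2/4)$ is too weak: it does not keep the coupling coefficient $(1+\beta^2/4)-u^2+\mu_1$ nonnegative. The reasoning you give (namely $u^2-1\geq -1$) actually yields the sharper bound $\mu_1\geq -1$, which together with $\beta^2/4\geq 2$ gives the coefficient a lower bound of $1>0$; that is the bound you should quote. Second, once the positive eigenfunction $\varphi$ is in hand, pairing $L\varphi=\mu_1\varphi$ against $u$ and the equation for $u$ against $\varphi$ gives $\mu_1\int_\Omega u\varphi\,dx=0$ outright by symmetry of the bilinear form, so no sign argument with $u-u^3$ is needed and the separate upper bound $\mu_1\leq 0$ is superfluous.
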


Indeed, assume for a moment that Proposition \ref{first:eigenvalue:is:zero} holds. Then we have.

\begin{proof}[Theorem \ref{stability:thm}]
Let ${\cal H}:=\{v\in H\::\: \|v\|_{L^2(\Omega)}=1\}$. By Proposition \ref{first:eigenvalue:is:zero},
\begin{align*}
 \inf_{v\in {\cal H}} \int_\Omega |\Delta v|^2+\beta |\nabla v|^2 +&(3u^2-1)v^2\, dx = \mu_1+2\inf_{v\in {\cal H}} \int_\Omega u^2v^2\, dx>0
\end{align*}
and Theorem \ref{stability:thm} follows. 

\end{proof}

The proof of Proposition \ref{first:eigenvalue:is:zero} uses the following result adjusted to our situation.

\begin{theorem}\label{sweers:thms} 
Let $L:=\begin{pmatrix} -\Delta & 0 \\ 0 & -\Delta \end{pmatrix}$ and $M$ be a $2\times2$ continuous matrix such that
\begin{enumerate}
 \item[1)] $-M$ is \emph{essentially positive}, that is, $-M_{12} \geq 0$ and $-M_{21}\geq 0$ in $\Omega$.
 \item[2)] $M$ is \emph{fully coupled}, that is, $M_{1,2}\not\equiv 0$ and $M_{2,1}\not\equiv 0$ in $\Omega$.
 \item[3)] there is a \emph{positive strict supersolution} $\phi$ of $L+M$, i.e., a function $\phi>0$ such that $(L+M)\phi > 0$ in $\Omega$.
\end{enumerate}

Then there are $\widetilde v, \widetilde w\in W^{2,N}_{loc}(\Omega)\bigcap C_0(\Omega)$ unique (up to normalization) positive functions such that 
\begin{align}\label{p1}
 (L+M)\begin{pmatrix} \widetilde v \\ \widetilde w \end{pmatrix}=\lambda_0 \begin{pmatrix} \widetilde v \\ \widetilde w \end{pmatrix},
\end{align}
where $\lambda_0>0$ is the smallest eigenvalue (smallest real part) of \eqref{p1}.

Moreover, there are  positive functions $v, w\in W^{2,N}_{loc}(\Omega)\bigcap C_0(\Omega)$ unique up to normalization such that 
\begin{align}\label{p2}
 (L+M)\begin{pmatrix} v \\ w \end{pmatrix}=\lambda_B \,B\, \begin{pmatrix} v \\ w \end{pmatrix},
\end{align}
where $\lambda_B>0$ is the smallest eigenvalue (smallest real part) 
of \eqref{p2} and 
$B \not \equiv 0$ is a matrix with $B_{ij} \in C(\bar{\Omega})$ and $B_{ij} \geq 0$.
\end{theorem}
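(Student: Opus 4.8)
Theorem \ref{sweers:thms} is a Krein–Rutman type statement for cooperative elliptic systems, so the plan is to cite and adapt the now-classical theory for such systems rather than reprove it from scratch. The key structural facts are that $L$ is the (component-wise) Laplacian, that the zeroth-order matrix $M$ satisfies the cooperativity condition (condition 1, "$-M$ essentially positive") and the irreducibility condition (condition 2, "fully coupled"), and that condition 3 provides a positive strict supersolution, which is exactly the hypothesis needed to guarantee that the principal eigenvalue is positive and that a maximum principle / positivity-preserving property holds for $L+M$.

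The proof I would write proceeds in three steps. \textbf{Step 1: reduce to a standard reference.} I would invoke the results of Sweers \cite{sweers} (and the related work of de Figueiredo–Mitidieri, López-Gómez–Molina-Meyer on principal eigenvalues of cooperative systems, and the survey \cite{gazzola:2010}) which establish, for a cooperative, fully coupled system $L+M$ admitting a positive strict supersolution, the existence of a simple principal eigenvalue $\lambda_0>0$ with a componentwise-positive eigenfunction $(\widetilde v,\widetilde w)$, unique up to scaling, lying in $W^{2,N}_{loc}(\Omega)\cap C_0(\Omega)$ by interior $L^p$-regularity and the boundary behaviour forced by the $C_0$-framework. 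This gives \eqref{p1}. \textbf{Step 2: the weighted eigenvalue problem \eqref{p2}.} For the generalized problem with the nonnegative, not-identically-zero weight matrix $B$ (with $B_{ij}\in C(\bar\Omega)$), I would observe that the operator pencil $\phi\mapsto (L+M)^{-1}B\,\phi$ is, after the positivity-preserving inverse $(L+M)^{-1}$ guaranteed by Step 1, a compact, strongly positive (or at least irreducible positive) operator on the ordered Banach space $C_0(\bar\Omega)^2$; the compactness is elliptic regularity plus Arzelà–Ascoli, and strong positivity follows from the strong maximum principle for the cooperative system together with the fact that $B\not\equiv 0$ and the system is fully coupled, so iterating spreads positivity to both components. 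Krein–Rutman then yields a unique (up to normalization) positive eigenfunction $(v,w)$ with positive principal eigenvalue $\lambda_B$.

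\textbf{Step 3: regularity and boundary conditions.} Since $\widetilde v,\widetilde w$ (resp. $v,w$) solve second-order elliptic equations with right-hand sides in $L^\infty_{loc}$ (using continuity of $M$, $B$ and of the eigenfunctions themselves), interior $L^p$ estimates give $W^{2,N}_{loc}(\Omega)$, and the construction within $C_0(\Omega)$ forces the vanishing on $\partial\Omega$; I would not push for higher boundary regularity since the statement only asks for $W^{2,N}_{loc}\cap C_0$. The positivity of $\lambda_0$ (and $\lambda_B$) is where the strict supersolution hypothesis is essential: comparing the eigenfunction with the supersolution $\phi$ via the maximum principle rules out $\lambda_0\le 0$. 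The main obstacle—really the only nontrivial point—is verifying \emph{strong} positivity/irreducibility of the solution operator for the weighted problem \eqref{p2} when $B$ is merely nonnegative and possibly degenerate (vanishing on part of $\Omega$ or having a zero entry): here one must use the full coupling of $M$ to propagate positivity from the support of $B$ through both components of the system, so that Krein–Rutman applies with a simple principal eigenvalue rather than just a nonnegative one. This is handled by the cited cooperative-systems literature, and I would reference it precisely rather than reconstruct the argument.
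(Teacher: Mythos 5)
Your proposal takes essentially the same route as the paper: the authors give no independent proof, but simply note that the theorem is a particular case of Sweers \cite[Theorem 1.1]{sweers:1992} and Mitidieri--Sweers \cite[Theorem 5.1]{mitidieri:1995}, with a one-line remark that the latter, stated for $B\in C_0(\bar\Omega)$, extends with the same proof to $B\in C(\bar\Omega)$. Your sketch of the Krein--Rutman machinery (positivity-preserving inverse via the strict supersolution, compactness, strong positivity via full coupling) is a correct reconstruction of what underlies those references, but it is material the paper delegates entirely to the citations; the only point you do not explicitly flag is the small gap between the cited statement ($B\in C_0$) and the form used here ($B\in C$), which the paper disposes of by inspection of the cited proof.
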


Theorem \ref{sweers:thms} is a particular case of \cite[Theorem 1.1]{sweers:1992} and \cite[Theorem 5.1]{mitidieri:1995}. In \cite[Theorem 5.1]{mitidieri:1995} the result is formulated for matrices $B \in C_0(\bar{\Omega})$, but the same proof applies for $B \in C(\bar{\Omega})$.

\begin{proof}[Proposition \ref{first:eigenvalue:is:zero}]
Let
\begin{align}\label{matrix:notation}
L:=\begin{pmatrix} -\Delta & 0 \\ 0 & -\Delta \end{pmatrix},\qquad \widetilde M:=\begin{pmatrix} \frac{\beta}{2} & -1 \\ u^2-1-\frac{\beta^2}{4} & \frac{\beta}{2} \end{pmatrix},\qquad B:=\begin{pmatrix} 0 & 0 \\ 1 & 0 \end{pmatrix},
\end{align}
and
\begin{align*}
M:=\widetilde M + \frac{\beta^2}{4}B=\begin{pmatrix} \frac{\beta}{2} & -1 \\ u^2-1 & \frac{\beta}{2} \end{pmatrix}.
\end{align*}

Let $\varphi_1>0$ be the first Dirichlet eigenfunction of the Laplacian in $\Omega$, i.e.,
\begin{equation}\label{first:eigenfunction}
\begin{aligned}
 -\Delta \varphi_1 = \lambda_1 \varphi_1 \quad \text{ in }\Omega,\qquad \varphi_1 = 0\quad \text{ on }\partial\Omega.
\end{aligned}
\end{equation}
Note that $-M$ is fully coupled, $M$ is essentially positive by Lemma \ref{u:bound:lemma} ii), and, for $\beta\geq \sqrt{8}$,
the function $\phi=\begin{pmatrix} \varphi_1 \\ \varphi_1 \end{pmatrix}$ is a positive strict supersolution of $L+M$, because $\lambda_1+\frac{\beta}{2}-1>0$ and $\lambda_1+u^2-1+\frac{\beta}{2}>0$.  Then, by Theorem \ref{sweers:thms}, there are unique (up to normalization) positive functions $v, w\in W^{2,N}_{loc}(\Omega)\bigcap C_0(\Omega))$ such that
\begin{align*}
 (L+\widetilde M)\begin{pmatrix} v \\ w \end{pmatrix}=(\lambda_B-\frac{\beta^2}{4}) \,B\, \begin{pmatrix} v \\ w \end{pmatrix},
\end{align*}
where $\lambda_B>0$ is the smallest eigenvalue.

Since $\partial\Omega$ is $C^{1,1}$ standard regularity arguments imply that $v\in C^4(\Omega) \cap C_0(\bar{\Omega})$, 
$\Delta v \in C_0(\bar{\Omega})$ and then
\begin{align}\label{6:5}
 \Delta^2 v - \beta \Delta v +(u^2-1)v& = (\lambda_B-\frac{\beta^2}{4}) v = \mu_1 v \qquad \text{ in }\Omega,
 \end{align}
where $\mu_1:=\lambda_B-\frac{\beta^2}{4}$ is the smallest eigenvalue of \eqref{6:5}.  By multiplying this equation by $u>0$ and integrating by parts we get that
\begin{align*}
 0=\mu_1\int_{\Omega}\, v\, u\,dx
\end{align*}
since $u$ is a solution of \eqref{efk:eq}. Therefore $\mu_1=0,$ because $u$ and $v$ are positive. This ends the proof by the variational characterization of $\mu_1.$

\end{proof}

\begin{remark}
 Note that \emph{without} the assumption $\beta\geq \sqrt{8}$ the result still holds for any solution $u$ such that $0< u\leq 1$ in $\Omega$, $\lambda_1+\frac{\beta}{2}-1>0$, and $\lambda_1+u^2-1+\frac{\beta}{2}>0.$ 
\end{remark}

\begin{remark}\label{tobias}
Uniqueness of positive solutions of \eqref{efk:eq} when $\beta\geq \sqrt{8}$ can be proved using Proposition \ref{first:eigenvalue:is:zero} as follows. Let $u$ and $v$ denote two positive solutions of \eqref{efk:eq}. Then $w:=u-v$ solves  $\Delta^2 w - \beta \Delta w + (u^2-1) w = -(uv+v^2)w$ in $\Omega$ and $w=\Delta w=0$ on $\partial \Omega$. By testing with $w$ we have
\begin{align*}
\int_\Omega |\Delta w|^2+\beta |\nabla w|^2 +(u^2-1)w^2\, dx = \int_\Omega (-uv-v^2)w^2\ dx \leq 0.
\end{align*}
But then Proposition \ref{first:eigenvalue:is:zero} yields that $w\equiv 0$. 
\end{remark}

\section{Radial symmetry of stable solutions}\label{radial:symmetry:section}

\begin{proof}[Theorem \ref{theorem:radial:minimizers}]
Let $L$ and $B$ be as in \eqref{matrix:notation} and let
\begin{align*}
\widetilde Q:=\begin{pmatrix} \frac{\beta}{2} & -1 \\ 3u^2-1-\frac{\beta^2}{4} & \frac{\beta}{2} \end{pmatrix}.
\end{align*}
Note that 
\begin{align*}
Q:=\widetilde Q + \left(\frac{\beta^2}{4}-2\right)B=\begin{pmatrix} \frac{\beta}{2} & -1 \\ 3(u^2-1) & \frac{\beta}{2} \end{pmatrix}
\end{align*}
is a fully coupled matrix and $-Q$ is essentially positive (see Theorem \ref{sweers:thms}). Moreover, since $\beta > \sqrt{12}-2\lambda_1$ there is $\delta>0$ such that $1/(\lambda_1+\frac{\beta}{2})<\delta<(\lambda_1+\frac{\beta}{2})/3.$ For such $\delta$, if $\varphi_1$ is as in \eqref{first:eigenfunction}, we have 
\begin{align*}
(L+Q)\begin{pmatrix} \delta\varphi_1 \\ \varphi_1 \end{pmatrix}=\begin{pmatrix} (\delta\lambda_1 + \frac{\beta}{2}\delta-1)\varphi_1 \\ 
 (\lambda_1 + 3(u^2-1)\delta+\frac{\beta}{2})\varphi_1 \end{pmatrix}>0\,,
\end{align*}
and therefore $L+Q$ has a positive strict supersolution.  Therefore, by Theorem \ref{sweers:thms} there is $\lambda_B>0$ and 
positive functions  $v,w\in W_{loc}^{2,N}(\Omega)\bigcap C_0(\Omega)$ such that
\begin{align*}
 (L+\widetilde Q)\begin{pmatrix} v \\ w \end{pmatrix}=\mu\, B \begin{pmatrix} v \\ w \end{pmatrix}, \qquad \text{ with }\mu=\lambda_B-\frac{\beta^2}{4}+2.
\end{align*}

By standard regularity arguments,  $v\in C^{4}(\Omega)\bigcap C_0(\Omega)$ and $v$ solves
\begin{align*}
 \Delta^2 v - \beta \Delta v +(3u^2-1)v& =\mu v \qquad \text{ in }\Omega,
\end{align*}
that is, $v$ is the first eigenfunction and $\mu$ is a simple (first) eigenvalue. Stability of $u$ implies $\mu\geq 0$.  We now argue by contradiction, assume that $u$ is not radially symmetric.  Then there is a nontrivial angular derivative $u_\theta=\partial_\theta u\not\equiv 0$. In particular, $u_\theta$ must change sign in $\Omega$ and $\Delta^2 u_\theta - \beta \Delta u_\theta +(3u^2-1)u_\theta = 0$ in $\Omega,$ since $\partial_{\theta}$ and $-\Delta$ commute and $u$ is a solution of \eqref{efk:eq}. This implies that $u_\theta$ is a sign-changing eigenfunction associated to the zero eigenvalue, but this contradicts the fact that $\mu\geq 0$ is the simple first eigenvalue. Therefore $u$ is a radial function.

\end{proof}

\begin{remark}\label{beta:res}
Note that Theorem \ref{theorem:radial:minimizers} holds true for any $\lambda_1$ if $\beta\ge \sqrt{12}.$ On the other hand, if $u$ is a nontrivial solution, then, by Theorem \ref{existence:positive:solution}, we have that $\lambda_1^2+\beta\lambda_1 < 1$. Combined with $\beta> \sqrt{12}-2\lambda_{1}$, we obtain that the infimum of  
$\beta$'s satisfying these inequalities is $\sqrt{8}$ with corresponding 
$\lambda_1 = \sqrt{3} - \sqrt{2}$.
\end{remark}

\section{Positivity of global radial minimizers}\label{positivity:radial:section}

Before we prove 
Theorem \ref{positivity:minimizers}, we introduce some notation. Let 
 \begin{align*}
 I:=\left\{
	\begin{array}{ll}
		[0,R) & \mbox{ if } \Omega=B_R,\\
		(R_0,R) & \mbox{ if } \Omega=B_R \backslash B_{R_0},
	\end{array}
\right.
\end{align*}
for some $R>R_0>0$, where $B_r$ denotes the open ball of radius $r$ centered
at the origin.  To simplify the presentation we abuse a little bit the notation and also denote $u(r)=u(|x|).$ 
Recall that $H_r=\{u\in H\::\: u\text{ is radially symmetric}\}.$

\begin{proof}[Theorem \ref{positivity:minimizers}]
Suppose first that $\Omega=B_R(0)$ and for a contradiction, assume that $u$ changes sign. Then either $u$ or $-u$ has a positive local 
maximum in $(0,R)$. Without loss of generality, assume there is $\eta\in(0,R)$ such that $1\geq M:=u(\eta)=\max\limits_{[0,R]}u>0.$ Let $v:\overline{I}\to\mathbb R$ be given by
 \begin{align*}
  v:=\left\{
	\begin{array}{ll}
		\frac{1-M}{1+M}(M-u)+M & \mbox{ in } [0,\eta],\\
		u & \mbox{ in } (\eta,R]\,.
	\end{array}
\right.
 \end{align*}
Note that $v$ is just a rescaled reflection of $u$ with respect to $u=M$ in $[0,\eta]$. Since $v'(\eta)=u'(\eta)=v'(0)=u'(0)=0$ we have that $v\in C^1(I)$. Also
\begin{equation}\label{smaller:derivatives}
\begin{aligned}
&\qquad|v'|\leq |u'|\quad  \text{ in }(0,R) \quad  \text{ and }\quad |v''|\leq |u''| \quad  \text{ in }(0,R) \setminus \{\eta\},\\
&\text{with strict inequalities in $(0, \eta)$, whenever $u'\neq 0$ and $u''\neq 0$.}
\end{aligned}
\end{equation}
 Furthermore in $(0,\eta)$ one has $0\leq v\leq \frac{1-M}{1+M}(M+1)+M=1$ and 
\begin{equation*}
\begin{aligned}
v-u&= \left(\frac{1-M}{1+M}+1\right)(M-u)\geq 0 \,,\\
v+u&= \left(-\frac{1-M}{1+M}+1\right)u+ \left(\frac{1-M}{1+M}+1\right)M = \left(\frac{2M}{1+M}\right)(u+1)\geq 0 \,.
\end{aligned}
\end{equation*}
  Then $v^2-u^2=(v-u)(v+u)\geq 0,$ thus $|1-v^2|=1-v^2\leq 1-u^2=|1-u^2|$ in $[0,R],$ and then
\begin{align}\label{smaller:nonlinear:part}
 \int_I (v^2-1)^2\ dx \leq \int_I (u^2-1)^2\ dx.
\end{align}
By \eqref{smaller:derivatives} and \eqref{smaller:nonlinear:part} one has $J_\beta(v)<J_\beta(u)$, a contradiction to the minimality of $u$ and thus $u$ does not change sign in $\Omega.$

The proof for the annulus $\Omega=B_R(0)\backslash B_{R_0}(0)$ for some $R>R_0>0$ is similar. If $u$ changes sign, then there are $\eta,\mu\in I$ such that 
\begin{align*}
M:=u(\eta)=\max_{[0,R]}u >0\qquad \text{ and }\qquad m:=-u(\mu)=-\min_{[0,R]}u >0.
\end{align*}
Without loss of generality, assume that $\eta<\mu.$ Let $v:\overline{I}\to\mathbb R$ be given by
 \begin{align*}
  v:=\left\{
	\begin{array}{ll}
		u & \mbox{ in } [R_0,\eta],\\
		\frac{m-M}{m+M}(M-u)+M & \mbox{ in } (\eta,\mu),\\
		-u & \mbox{ in } [\mu,R]\,.
	\end{array}
\right.
\end{align*}
Then in $(\eta,\mu)$
\begin{align*}
 v&\leq \frac{m-M}{m+M}(M+m)+M=m\leq 1\qquad \text{ if }m>M,\\
 v&=\frac{M-m}{m+M}(u-M)+M\leq M\leq 1\qquad \text{ if }m\leq M,\\
 v-u&=\left(\frac{M-m}{m+M}-1\right)u+\left(\frac{m-M}{m+M}+1\right)M=\frac{2m}{m+M}(-u+M)\geq 0,\\
 v+u&=\left(\frac{M-m}{m+M}+1\right)u+\left(\frac{m-M}{m+M}+1\right)M=\frac{2M}{m+M}(u+m)\geq 0\,.
\end{align*}
Therefore \eqref{smaller:derivatives} (with strict inequality on $(\eta, \mu)$) and \eqref{smaller:nonlinear:part} also hold in this case. 
Then $J_\beta(v)<J_\beta(u),$ which contradicts the minimality of $u$ and thus $u$ does not change sign in $\Omega$.

It only remains to prove the assertions about $\partial_r u$.  We prove the case when $\Omega$ is an annulus, since the case when $\Omega$ is a ball can be treated similarly.

Without loss of generality assume $u\geq 0$ in $\Omega.$ Now, suppose by contradiction that $u'$ has more than one change of sign. Then $u'$ has at least three changes of sign: two local maxima and one local minimum. Let $\eta_1\in I$ be such that $u(\eta_1)=\max\limits_{I}u=:M_1$ and $\tilde\eta\in I$ another local maximum. We only prove the case $\eta_1<\tilde \eta,$ the other one is analogous. Let $\mu\in (\eta_1,\tilde\eta)$ be such that $u(\mu)=\min\limits_{[\eta_1,\tilde\eta]}u=:m$ and $\eta_2\in(\mu,\tilde\eta]$ such that $u(\eta_2)=\max\limits_{[\mu,\tilde\eta]}u=:M_2.$ Clearly $m<M_2\leq M_1.$ Define $v:I\to [0,1]$ by
\begin{align*}
  v:=\left\{
	\begin{array}{ll}
		u & \mbox{ in } [R_0,\eta_1]\cup[\eta_2,R],\\
		\frac{M_1-M_2}{M_1-m}(u-M_1)+M_1 & \mbox{ in } (\eta_1,\mu),\\
		M_2 & \mbox{ in } [\mu,\eta_2),
	\end{array}
\right.
\end{align*}
By similar calculations as above it is easy to see that $v\in C^1(I),$ and that $J_\beta(v)<J_\beta(u),$ which contradicts the minimality of $u$ and thus $u'$ only changes sign once in $I$. 

\end{proof}
 
\section{Symmetry of positive solutions}\label{symmetry:section}

\begin{proof}[Proposition \ref{symmetry:theorem}]
 We can write \eqref{efk:eq} as the following system.
 \begin{equation}\label{cooperative:system}
 \begin{aligned}
-\Delta u+\frac{\beta}{2}u&=w&&\qquad \text{ in }\Omega,\\
-\Delta w+\frac{\beta}{2}w&=u-u^3+\frac{\beta^2}{4}u&&\qquad \text{ in }\Omega,\\
u=w&=0&&\qquad \text{ on }\partial\Omega\,.\\
  \end{aligned}
 \end{equation}
By Lemma \ref{u:bound:lemma} and $\beta\geq \sqrt{8}$ one has $\|u\|_{L^\infty(\Omega)}\leq 1$ and it is easy to check that 
a standard moving plane method as in \cite{troy:1981} can be applied to \eqref{cooperative:system}. The regularity of the boundary can be relaxed up to Lipschitz regularity if one uses maximum principles for small domains, see \cite{Berestycki:1991,FoldesPolacik:2009}. 

\end{proof}

\section{Saddle solution}\label{saddle:section}

Recall the definition of $K_0$, $M_\beta$, and $C_\beta$ given in \eqref{beta:constants}.

\begin{proof}[Theorem \ref{saddle:solution:theorem}]
Let $\beta\geq K_0$.  By odd reflection, it suffices to find a positive $u \in C^{4}(\overline{\mathbb R^2_{+}})$ solving 
\begin{equation}\label{equation:positive:quadrant}
\begin{aligned}
\Delta^2 u - \beta \Delta u &= u - u^3 &&\qquad \text{ in }\mathbb R^2_{+},\\
\Delta u = u &= 0 &&\qquad \text{ on }\partial \mathbb R^2_{+},
\end{aligned}
\end{equation}
where $\mathbb R^2_{+}=\{(x_1,x_2)\in\mathbb R^2\::\: x_1>0,x_2>0\}$. Indeed, denote again by $u$ the extension to $\mathbb R^2$ by odd reflections. Since $u = 0$ on $H_1 := \{x : x_2 = 0\}$ one has  $u_{x_1} = u_{x_1 x_1} = u_{x_1 x_1 x_1} = 0$ on $H_1$. Also $\Delta u = 0$ on $H_1$ implies $u_{x_2 x_2} = 0$ on $H_1$, and consequently $u_{x_2x_2x_1} = 0$ on $H_1$. Since $u_{x_1}$, $u_{x_1 x_1}$, $u_{x_2x_2}$,  $u_{x_1 x_1 x_1}$, $u_{x_2x_2x_1}$ are
odd with respect to $H_1$ they are continuous on $H_1$. Moreover, all other partial derivatives (up to third order) are continuous on $H_1$ as well, since they are even functions.  The same procedure applies to $H_2 := \{x : x_1 = 0\}$ and thus $u \in C^3(\R^2)$.

From the equation \eqref{equation:positive:quadrant} we also obtain continuity of $\Delta^2 u$ and $\Delta^2 u = 0$ on 
$\partial \mathbb R^2_{+}$. By a similar reasoning as above
we obtain that the extension is of class $C^4(\mathbb R^2)$ and it is a classical solution of \eqref{efk:eq} with $\Omega = \mathbb R^2$, and 
consequently it is a saddle solution. 

We find a positive solution of \eqref{equation:positive:quadrant} by a limiting procedure using Theorem \ref{existence:positive:solution} with $\Omega_R:=(0,R)^2$ and letting $R\to\infty$.  Note that for $R$ big enough, the solution $u_R$ given by Theorem \ref{existence:positive:solution} satisfies that $0<u_R\leq M_\beta$ in $\Omega_R$, and therefore, by Lemma \ref{l:reg}, there is some $C>0$ independent of $R$ such that
\begin{align}\label{C:definition}
 \|u_R\|_{C^{4,\alpha}(\overline{\Omega}_R)}<C\quad \text{ for all } R > 0.
\end{align}

By the Arzela-Ascoli theorem there is a sequence $R_N \to \infty$ such that $u_{R_N} \to u$ in $C^4(\overline{\mathbb R^2_+})$ for some $u$ satisfying \eqref{equation:positive:quadrant}. We now prove that $u\not\equiv 0$. Indeed, fix
\begin{align}\label{r:definition}
 r>80(1+\beta)\widetilde C^2+160,
\end{align}
where $\widetilde C>0$ is a constant independent of $R$ specified below. We show that
\begin{align}\label{lower:bound}
\|u_R\|_{L^\infty([0,r+2]^2)}\geq \frac{1}{\sqrt{2}}\quad \text{ for all }R>r+3.
\end{align}
Assume by contradiction that there is $R>r+3$ such that 
\begin{align}\label{contradiction:hyp}
\|u_R\|_{L^\infty([0,r+2]^2)}\leq \frac{1}{\sqrt{2}}.
\end{align}
We define the following sets
\begin{align*}
 \omega_1&:=\{x\in\Omega_R\::\: \operatorname{dist}(x,\partial \Omega_R)\leq 1,\ x_1\leq r+1, x_2\leq r+1\},\\
 \omega_2&:=\{x\in\Omega_R\::\: \operatorname{dist}(x,\partial \Omega_R)\geq 1,\ x_1\leq r+1, x_2\leq r+1\},\\
 \omega_3&:=\{x\in\Omega_R\::\: r+1\leq\max\{x_1, x_2\}\leq r+2\},\\
 \omega_4&:=\{x\in\Omega_R\::\:  x_1\geq r+2, \textnormal{ or } x_2\geq r+2\}.
\end{align*}
Note that $\Omega_R=\bigcup_{i=1}^4\omega_i.$ Now, let $\phi_1\in C^2(\overline{\Omega_R})\bigcap C_0(\overline{\Omega_R}),$ $\phi_2\in C^2(\overline{\Omega_R})$ 
such that $0\leq \phi_i\leq 1,$ $\|\phi_i\|_{C^2(\Omega_R)}\leq K$ for $i=1,2$ and some $K>0$ independent of $R,$ and 
\begin{align*}
 \phi_1\equiv 1\quad \text{ in }\omega_2,\qquad \phi_1\equiv 0\quad \text{ in }\omega_4,\qquad \phi_2\equiv 0\quad \text{ in }\omega_2,\qquad \phi_2\equiv 1\quad \text{ in }\omega_4.
\end{align*}
Further, let $\psi\in C^4(\Omega_R)\bigcap C_0(\Omega_R)$ be given by $\psi:=\phi_1+\phi_2u_R.$ Then $\psi\equiv 1$ in $\omega_2,$ $\psi\equiv u_R$ in $\omega_4,$ and there is some $\widetilde C>0,$ depending only on $C$ (from \eqref{C:definition}) and $K$, such that 
$\|\psi\|_{C^2(\Omega_R)}\leq \widetilde C.$  For $i=1,\ldots,4$ let 
\begin{align*}
J_{i}(v):=\int_{\omega_i} \frac{|\Delta v|^2}{2}+\beta \frac{|\nabla v|^2}{2} + \frac{1}{4}(v^2-1)^2\ dx \quad \text{ for $v\in H^2(\Omega_R)\bigcap H_0^1(\Omega_R).$}
\end{align*}
Note that $\sum\limits_{i=1}^4 J_{i}(v) = J_\beta(v)+\frac{1}{4}|\Omega_R|$ for $v\in C^2(\Omega_R),$ here $J_\beta$ is as in \eqref{J0:definition} for $\Omega=\Omega_R.$  Then $\sum\limits_{i=1}^4 J_{i}(\psi)\leq[(\frac{1}{2}+\frac{\beta}{2})\widetilde C^2+1](|\omega_1|+|\omega_3|)+J_{4}(u_R),$ and by \eqref{contradiction:hyp}, $\sum\limits_{i=1}^4 J_{i}(u_R)\geq \frac{|\omega_2|}{16}+J_{4}(u_R).$  Therefore 
\begin{align*}
J_\beta(u_R)-J_\beta(\psi)\geq r\left(\frac{r}{16}-5((1+\beta)\widetilde C^2+2)\right)>0, 
\end{align*}
by \eqref{r:definition}, a contradiction to the minimality of $u_R.$ Therefore \eqref{lower:bound} holds and the maximum principle yields that $u>0$ in $\mathbb R^2_+$ is a solution of \eqref{equation:positive:quadrant}. 

\end{proof}

\section{Bifurcation from a simple eigenvalue}\label{bifurcation:section}

\begin{theorem}\label{bifurcation:beta}
Let $\Omega\subset \mathbb \R^N,$ $N\geq 1$ be a smooth bounded domain or a hyperrectangle. If the first Dirichlet eigenvalue $\lambda_1<1$, then there is $\varepsilon>0$ such that \eqref{efk:eq} admits a positive solution $u_\beta\in C^{4,\alpha}(\Omega)$ for all $\beta\in(\bar\beta-\varepsilon,\bar\beta),$ where
\begin{align}\label{eq:beta}
\bar\beta=\frac{1-\lambda_1^2}{\lambda_1}.
\end{align}
Additionally, if $\bar\beta>\sqrt{8}$ and $\Omega$ is smooth, then \eqref{efk:eq} admits a unique positive solution $u_\beta$ such that $\|u_\beta\|_{L^\infty(\Omega)}\leq 1$ for all $\beta\in[\sqrt{8},\bar\beta)$. 
\end{theorem}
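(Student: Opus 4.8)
I would prove the two assertions by separate mechanisms. For the first, the plan is to apply the Crandall--Rabinowitz theorem on bifurcation from a simple eigenvalue to the analytic map $G\colon\R\times X\to Y$, $G(\beta,u):=\Delta^2u-\beta\Delta u-u+u^3$, where $X:=\{u\in C^{4,\alpha}(\overline\Omega):u=\Delta u=0\text{ on }\partial\Omega\}$ and $Y:=C^{0,\alpha}(\overline\Omega)$. Here $G(\beta,0)\equiv0$ and $D_uG(\beta,0)=\Delta^2-\beta\Delta-1$, which by Lemma \ref{l:reg} is Fredholm of index $0$ from $X$ to $Y$, self-adjoint in $L^2(\Omega)$, with eigenvalues $\lambda_k^2+\beta\lambda_k-1$ and eigenfunctions the Dirichlet eigenfunctions $\varphi_k$ of $-\Delta$ in $\Omega$. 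Since $t\mapsto t^2+\bar\beta t$ is strictly increasing on $(0,\infty)$ and $\lambda_1$ is simple, at $\beta=\bar\beta$ (where $\lambda_1^2+\bar\beta\lambda_1=1$ by \eqref{eq:beta}) the kernel equals $\operatorname{span}\{\varphi_1\}$ and the range equals $\{f\in Y:\int_\Omega f\varphi_1\,dx=0\}$; the transversality condition holds because $D_\beta D_uG(\bar\beta,0)\varphi_1=-\Delta\varphi_1=\lambda_1\varphi_1$ pairs nontrivially with $\varphi_1$ and so lies outside the range. Crandall--Rabinowitz then produces a $C^1$ curve $s\mapsto(\beta(s),u(s))$ of solutions with $\beta(0)=\bar\beta$, $u(s)=s\varphi_1+o(s)$ in $X$; since $\varphi_1>0$ in $\Omega$, the Hopf lemma gives $u(s)>0$ in $\Omega$ for small $s>0$. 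Finally, Theorem \ref{existence:positive:solution} forbids nontrivial solutions when $\beta\geq\bar\beta$, so $\beta(s)<\bar\beta$ for all small $s\neq0$; hence $\beta(\cdot)$ sends a right neighbourhood of $0$ onto some interval $(\bar\beta-\varepsilon,\bar\beta)$, and for each such $\beta$ the corresponding $u(s)\in C^{4,\alpha}(\overline\Omega)$ is a positive solution.

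For the second assertion, the existence of a positive $u_\beta$ with $\|u_\beta\|_{L^\infty(\Omega)}\le1$ for every $\beta\in[\sqrt8,\bar\beta)$ follows at once from Theorem \ref{existence:positive:solution} (note $\sqrt8>K_0$ and $\lambda_1^2+\beta\lambda_1<\lambda_1^2+\bar\beta\lambda_1=1$) and Lemma \ref{u:bound:lemma} ii), which in fact forces $\|u\|_{L^\infty(\Omega)}\le1$ for \emph{every} positive solution; the real content is uniqueness, which I would obtain by a continuation argument powered by strict stability. By Theorem \ref{stability:thm}, for $\beta\ge\sqrt8$ every positive solution of \eqref{efk:eq} is strictly stable, so $\Delta^2-\beta\Delta+(3u^2-1)$ has positive first eigenvalue and is invertible; the implicit function theorem then gives, near each positive solution, a unique local $C^1$ branch of solutions, positive for nearby $\beta$ by $C^4$-continuity and the Hopf lemma. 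Let $I\subset[\sqrt8,\bar\beta)$ be the set of parameters with a unique positive solution; I claim $I$ is open, closed, and nonempty in the connected set $[\sqrt8,\bar\beta)$, hence $I=[\sqrt8,\bar\beta)$. Openness and closedness both use an a priori bound: since $\|u\|_{L^\infty(\Omega)}\le1$ for all positive solutions, Lemma \ref{l:reg} yields a uniform $C^{4,\alpha}$ bound, so any sequence of positive solutions at $\beta_n\to\beta_0\in[\sqrt8,\bar\beta)$ is precompact in $C^4$, and its limit is a \emph{positive} solution at $\beta_0$ --- it cannot be $0$, since otherwise $u_{\beta_n}\to0$ in $C^4$ and $u_{\beta_n}/\|u_{\beta_n}\|_{L^2(\Omega)}\to\psi\ge0$ with $\|\psi\|_{L^2(\Omega)}=1$ solving $\Delta^2\psi-\beta_0\Delta\psi-\psi=0$, forcing $\psi$ proportional to $\varphi_1$ and $\lambda_1^2+\beta_0\lambda_1=1$, that is, $\beta_0=\bar\beta$, which is excluded. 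Given this, any second positive branch near a point of $I$ would converge to the unique solution there and hence coincide with the IFT branch (openness), and two distinct positive solutions at a limit $\beta_0$ of points of $I$ would each continue to distinct positive solutions nearby (closedness).

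It remains to check $I\neq\emptyset$, that is, uniqueness for $\beta$ slightly below $\bar\beta$. First, every positive solution tends to $0$ in $C^4(\overline\Omega)$ as $\beta\to\bar\beta^-$: the uniform $C^{4,\alpha}$ bound gives precompactness and every subsequential limit solves \eqref{efk:eq} at $\bar\beta$, hence vanishes. Therefore, for $\beta$ close to $\bar\beta$ every positive solution lies in the $C^{4,\alpha}$-neighbourhood of $(\bar\beta,0)$ in which, by Crandall--Rabinowitz, the solutions are only the trivial ones and those on the bifurcating curve. Because $u-u^3$ is odd, $G(\beta,-u)=-G(\beta,u)$, so this curve is a pitchfork ($u(-s)=-u(s)$, $\beta(-s)=\beta(s)$) and positive solutions correspond to $s>0$; moreover each $u(s)$ with $s\neq0$ is nondegenerate (it or its reflection is a strictly stable positive solution, and $J_\beta''$ depends on $u$ only through $u^2$), so the curve has no turning point --- $\beta'(s)=0$ would put the nonzero $u'(s)$ in the kernel of $D_uG(\beta(s),u(s))$ --- and thus $\beta$ is strictly monotone in $s>0$. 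Combined with $\beta(s)<\bar\beta$, this gives exactly one positive solution for each $\beta$ in some $(\bar\beta-\varepsilon,\bar\beta)$, so $I\neq\emptyset$.

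The delicate step is precisely this nonemptiness: quantifying the collapse of positive solutions to $0$ as $\beta\to\bar\beta$ and then reading off local uniqueness from the bifurcation picture, where the oddness of the nonlinearity (pitchfork structure) and strict stability (no turning points) are both indispensable. The remaining ingredients --- uniform $C^{4,\alpha}$ bounds and $C^4$-compactness, the exclusion of $0$ as a subcritical limit, and the implicit function theorem at strictly stable solutions --- are routine once one has Theorem \ref{existence:positive:solution}, Theorem \ref{stability:thm}, Lemma \ref{l:reg}, and the maximum principle.
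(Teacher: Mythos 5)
Your first part follows the paper verbatim: the same Crandall--Rabinowitz framework with the same spaces, kernel/range identification, and transversality check. Your uniqueness argument is correct but is organized quite differently from the paper's. The paper's strategy is to \emph{continue branches}: it extends the Crandall--Rabinowitz curve forward, shows positivity and the $L^\infty\le1$ bound are preserved along it until it covers $(\sqrt8,\bar\beta)$, and then, given an arbitrary positive solution $v$ at $\beta_0$, continues the implicit-function-theorem branch through $v$ to the right, shows it must terminate at $(\bar\beta,0)$, and invokes local uniqueness of the Crandall--Rabinowitz curve to conclude $v=u_{\beta_0}$. You instead run a \emph{connectedness} argument on the parameter set $I\subset[\sqrt8,\bar\beta)$ of parameters with a unique positive solution, proving $I$ open, closed and nonempty. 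Your nonemptiness step is the most distinctive ingredient: you use the oddness of the nonlinearity to identify the bifurcation as a pitchfork, deduce $\beta'(0)=0$ and $u(-s)=-u(s)$, and then use strict stability of $u(s)$ for $s>0$ small (Theorem~\ref{stability:thm}) to exclude turning points and obtain strict monotonicity of $\beta(\cdot)$ on $(0,\varepsilon)$; the paper instead computes the sign of the third derivative $\partial_{uuu}G$ to read off subcriticality. Both proofs ultimately rest on the same three pillars --- $C^{4,\alpha}$ compactness from the a priori bound $\|u\|_{L^\infty}\le1$, invertibility of the linearization at positive solutions from Theorem~\ref{stability:thm}, and the local description of the solution set near $(\bar\beta,0)$ from Crandall--Rabinowitz --- and both exclude $0$ as a subcritical limit by normalizing and passing to an eigenfunction of the linearized operator. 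Your version is somewhat more explicit in establishing local uniqueness near $\bar\beta$ (via the pitchfork picture), which the paper leaves largely implicit; the paper's branch-following presentation is perhaps more streamlined but leans more heavily on that implicit local uniqueness. Either way, the approaches are interchangeable and equally correct.
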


\begin{proof}
Let $X=\{ u \in C^{4,\alpha}(\Omega) \cap C^2(\overline{\Omega}) : u = \Delta u = 0 \textrm{ on } \partial \Omega\}$ and $Y=C^{0,\alpha}(\Omega).$  Consider the operator 
\begin{align*}
G:\R\times X\to Y;\quad G(\beta,u):= \Delta^2 u -\beta \Delta u - u + u^3.
\end{align*}
Then we have $G(\beta,0)=0$ for all $\beta$. Moreover $u\in  X$ solves \eqref{efk:eq} if and only if $G(\beta,u)=0.$ We consider the partial derivative
\begin{align*}
 \partial_u G:(0,\infty)\times X\to {\cal L}(X,Y),\quad \partial_u G(\gamma,u)[v]= \Delta^2v-\beta\Delta v -v+3u^2v.
\end{align*}
For $\beta>0$ we set $A_\beta := \partial_u G(\beta,0)$, i.e., $A_\beta v=\Delta^2v-\beta\Delta v-v$.  Let $N(A_\beta)$ and $R(A_\beta)$ denote the kernel and the range of $A_\beta$ respectively. Note that $v\in N(A_\beta)$ if and only if $\Delta^2v-\beta\Delta v=v$ in $\Omega.$  Let $\varphi_1$ be the 
first eigenfunction of the Laplacian in $\Omega$, see \eqref{first:eigenfunction} 
with $\|\varphi_1\|_{L^2(\Omega)} = 1$. 

By the definition of $\bar\beta>0$, one has 
$\varphi_1\in N(A_{\bar\beta}).$ Moreover, by the Krein-Rutman Theorem 
 $N(A_{\bar\beta})=\{\alpha \varphi_1\::\: \alpha\in\R\}.$ Further, since $A_{\bar{\beta}}$ is  self adjoint, by the Fredholm Theory 
$R(A_{\bar\beta})=\{v\in Y : \int_\Omega \varphi_1 v \, dx =0\}.$  In particular, $\frac{d}{d\beta} A_{\beta}\varphi_1\mid_{\beta=\bar\beta}  = -\Delta\varphi_1= \lambda_1\varphi_1\not\in R(A_{\bar\gamma}).$ Hence, by \cite[Lemma 1.1]{crandall:1973} there are $\varepsilon>0$ and $C^1-$functions $\beta:(-\varepsilon,\varepsilon)\to(0,\infty)$ and $u:(-\varepsilon,\varepsilon)\to X$ such that $\beta(0)=\bar\beta$ and $G(\beta(t),u(t))=0$ for all $t\in(-\varepsilon,\varepsilon)$, moreover $G^{-1}(\{0\})$ near $(\bar\beta, 0)$ consists precisely of the curves $u\equiv0$ and $(\beta(t),u(t))$, $t\in(-\varepsilon,\varepsilon)$. Since $\partial_{uu}G(\bar\beta,0)[\varphi_1,\varphi_1]=0$ and $\int_\Omega \varphi_1 \partial_{uuu}G(\bar\beta,0)[\varphi_1,\varphi_1,\varphi_1]\, dx = 6\int_\Omega \varphi_1^4 \, dx>0$ we have a subcritical bifurcation, and therefore $u = \pm c (\bar{\beta} - \beta)^{\frac{1}{2}}\varphi_1+o( t^\frac{1}{2})$ for some constant $c>0$ and $t\in(0,\varepsilon)$. This proves the first claim.  

For the second claim, assume $\bar\beta>\sqrt{8}$ and that $\Omega$ is a smooth domain.  Let $(0,T)$ be the maximal interval of existence in $[\sqrt{8},\bar\beta]$ for the curve $u$ 
with $T\in(0,\infty]$. By this we mean that $\beta(t)\geq \sqrt{8}$ for $t\in(0,T)$, that $\gamma(t)$ can be uniquely extended for each $t\in(0,T)$, and, if $\beta(T)>\sqrt{8}$, then $\gamma$ cannot be uniquely extended at $T$. In particular, the curve ceases to exist if it intersects another curve e.g. $(\beta, 0)$ or if it bifurcates. 

We show first that $u(t)>0$ for all $t\in(0,T)$.  By the $C^1-$ continuity of the curve ${\cal C}:=\{u(t): t\in(0,T)\}$ we have that $0<u_\beta< 1$ for all $t\in(0,T)$ sufficiently close to zero. 
By Lemma \ref{u:bound:lemma} iii) and the continuity of ${\cal C}$ it follows that $\|u(t)\|_{L^\infty(\Omega)}\leq 1$ for all $t\in(0,T).$  
To show that $u(t)>0$ for all $t\in(0,T)$ we argue by contradiction. Assume that $\bar t=\sup\{t\in(0, T) : u(s)>0 \text{ in }\Omega \text{ for all }s\in(0,t]\}< T$.  Let $\bar u = u(\bar t)\in C^{4,\alpha}(\Omega).$  Note that $\bar u$ satisfies the system
\begin{equation*}
\begin{aligned}
 -\Delta \bar u + \beta \bar u = w \quad \text{ in }\Omega,\qquad -\Delta w = \bar u-\bar u^3 \quad \text{ in }\Omega,\qquad  w = u= 0\quad \text{ on }\partial\Omega,
\end{aligned}
\end{equation*}
for some $\beta>0,$ and $\bar u -\bar u^3\geq 0$ since $0\leq \bar u\leq 1.$ Since $T > \bar{t}$ we have that $\bar u \not\equiv 0$. Then, by the maximum principle and the Hopf Lemma, $\bar u >0$ in $\Omega$ and $\partial_\nu \bar u <0$ on $\partial \Omega$, where $\nu$ denotes the exterior normal vector to $\partial \Omega$. But this contradicts the $C^4-$continuity of $\cal C$ and the definition of $\bar u$.  Therefore $u(t)>0$ for all $t\in(0,T).$

Then, by Lemma \ref{u:bound:lemma}, $\|u(t)\|_{L^\infty(\Omega)}\leq 1$ for all $t\in(0,T)$, and standard elliptic regularity theory yields that $\|u(t)\|_{C^{4,\alpha}(\Omega)}\leq C$ for all $t\in(0,T)$ and for some $C>0$.  Moreover, since the positivity is preserved along the curve, $u(T) \not \equiv 0$. Indeed, $(\beta(t), u(t))$ cannot return to a neighborhood of $(\bar{\beta}, 0)$ by uniqueness close to $(\bar\beta,0)$. Also, it cannot intersect $(\beta, 0)$ as any other branch bifurcating from $(\beta, 0)$ $(\beta < \bar{\beta})$ consists locally of sign changing solutions because the corresponding eigenfunction directions are sign changing (perpendicular to the principal eigenfunction).  By the first part of Theorem \ref{existence:positive:solution}, we know that $\beta(t)<\bar\beta$ for all $t\in(0,T).$ This implies that necessarily $(\sqrt{8},\bar\beta)\subset \{\beta(t)\::\: t\in(0,T)\}$. This proves the existence of solutions for all $\beta\in[\sqrt{8},\bar\beta)$. 

We now show that $u_\beta$ is the unique positive solution of \eqref{efk:eq} for $\beta\in[\sqrt{8},\bar\beta)$. Indeed, let $v$ denote a positive solution of \eqref{efk:eq} for some  $\beta_0 \in[\sqrt{8},\bar\beta)$. By Theorem \ref{stability:thm}, $v$ is a strictly stable solution, and therefore $D_uG (v, \beta_0)$ is an invertible operator. Then, by the implicit function theorem there exists $\varepsilon > 0$  and a smooth curve $\gamma : (\beta_0 - \varepsilon, \beta_0 + \varepsilon) \to X$ such that $G(\beta, \gamma(\beta)) = 0$ and for any solution of $G(\beta, u) = 0$ sufficiently close to $(\beta_0, v)$ one has $u \in \gamma$.  

Arguing as before, we can extend $\gamma$ to a maximal interval $(\beta_1, \beta_2)$ with $\gamma$ containing only positive solutions. Then the strict stability in Theorem \ref{stability:thm} guarantees that $\gamma$ does not have bifurcation or turning points. Since the only solution for $\beta\geq \bar\beta$ is zero, by the first part of Theorem \ref{existence:positive:solution}, we have that $\beta_2 \leq \bar{\beta}$  and $\gamma (\beta_2)$ is a non-negative function. Arguing as before, one obtains that necessarily $\gamma(\beta_2)\equiv 0$ and then $\beta_2 = \bar{\beta}$. 
Here, as above,  we have used that all other bifurcation points of the form $(\beta,0)$ must correspond locally to sign changing solutions.  The uniqueness of the branch close to the bifurcation point $(\bar\beta,0)$ yields that necessarily $v = u_{\beta_0}$, as desired. 

If $\Omega$ is a hyperrectangle, one proves the positivity along the curve using Serrin's boundary point Lemma \cite[Lemma 1]{serrin:1971} at corners
and the rest of the proof remains unchanged.

\end{proof}

\begin{remark}\label{bif:pts}
For balls of radius $R>0$ we can explicitly write the relationship between $R$ and the bifurcation point $\bar\beta$. Indeed, in this case, $R:= \frac{\sqrt{2}j_{N/2-1,1}}{\sqrt{-\beta+\sqrt{\beta^2+4}}}$, where $N$ is the dimension and $j_{N/2-1,1}$ is the first positive zero of the Bessel function $J_{N/2-1}$, see for instance \cite[Section 4.1]{kesavan}.  For example, for $\beta=\sqrt{8}$, the bifurcation occurs at balls of radius $R_N:= \frac{j_{N/2-1,1}}{\sqrt{\sqrt{3}-\sqrt{2}}}$, for instance, $R_2:\approx 4.26$, $R_3:\approx 5.57$, $R_4:\approx 6.79$, $R_{10}:\approx 13.46$, etc...
\end{remark}

\section{Continuity result}\label{convergence:section}

\begin{proof}[Theorem \ref{convergence:theorem}]
Fix $p>N$, $\beta=1$, let $\gamma_0(\beta,p,\Omega)=\gamma_0>0$ be the constant given by Lemma \ref{l:reg:2}, let $0<\gamma<\min\{\gamma_0,\frac{1}{64}\}$, $u_\gamma$ be the global minimizer of \eqref{J:gamma},
 and $\mu=\gamma^{-\frac{1}{4}}.$ Note that $w:\mu\Omega\to \mathbb R$ given by $w(x):=u_\gamma(\mu^{-1}x)$ is a weak solution in $H$ of $\Delta^2 w - \mu^2\Delta w = w-w^3$ in $\mu\Omega.$ Also note that $\mu\geq \sqrt{8}$ if $\gamma\leq \frac{1}{64}.$  By Proposition \ref{bound:global:minimizers} and Lemma \ref{l:reg:2} we have that $\| u_\gamma \|_{L^\infty(\Omega)}=\|v\|_{L^\infty(\Omega)}\leq 1$ and $\|u_\gamma \|_{C^{5,\alpha}(\Omega)}\leq C$ for some $C>0$ independent of $\gamma.$

 Let $u^*\in H^1_0(\Omega)$ be a global minimizer of \eqref{J:gamma} in $H^1_0(\Omega)$ with $\gamma=0$. It is well known that $u^*$ is a unique global minimizer, smooth, strictly stable, and it does not change sign (see, for example, \cite{berestycki:1981}).

Now, since $u_\gamma$ is bounded in $C^{5,\alpha}$ independently of $\gamma$ it is easy to see that $u_\gamma \to u^*$ in $C^4$ as $\gamma\to 0$, by the
uniqueness of global minimizers of the limit problem \eqref{efk:gamma} with $\gamma=0$. 

Let $G\in C^1(\R \times C^{4}(\Omega))$ be given by $G(\gamma,u)=\gamma\Delta^2 u - \Delta u -u + u^3.$ Notice that $\partial_u G (\gamma,u) \in {\cal L}(C^{4}(\Omega),\R)$ and $\partial_u G (0,u^*)$ has trivial kernel by the strict stability of $u^*$. Therefore, by the implicit function theorem (see for example \cite[Theorem I.1.1]{kielhofer:2012}), there is a neighborhood $I\times V\subset \R \times C^{4}(\Omega)$ of $(0,u^*)$ and a continuous function $\lambda:I\to V$ with $\lambda(0)=u^*$ such that $G(\gamma,\lambda(\gamma))=0$ for all $\gamma\in I$ and every solution of $G(\gamma,u)=0$ in $I\times V$ is of the form $(\gamma,\lambda(\gamma))$ for some $\gamma\in I.$  Since $u_\gamma\to u^*$ in $C^{4}$ as $\gamma\to 0^+$ and $u_\gamma$ is an arbitrary global minimizer, we obtain that $u_\gamma$ is the unique global minimizer for all $\gamma\in I$. Finally, if the first Dirichlet eigenvalue $\lambda_1(\Omega)<1,$ then $u^*\not\equiv 0$ (cf. proof of Theorem \ref{existence:positive:solution}) and by the strong maximum principle $u^*>0$ in $\Omega,$ by the Hopf Lemma $\partial_\nu u^*<0$ in $\partial \Omega$, where $\nu$ denotes the exterior normal vector, and therefore $u_\gamma>0$ in $\Omega$ for all $\gamma\in I$, by making $I$ a smaller neighborhood of $0$ if necessary.

\end{proof}

\begin{remark}
 Note that the proof of Theorem \ref{convergence:theorem} also shows the existence of solutions in $C^{5,\alpha}(\Omega)$ for equation \eqref{efk:gamma} with $\gamma\in[-\gamma_0,0]$. 
\end{remark}

\section{Numerical approximations}\label{numerics}
In this Section we present some numerical approximations of a bifurcation branch with respect to $\beta$ from $\bar\beta=\frac{1-\lambda_1^2}{\lambda_1}$ for the problem \eqref{efk:eq} (cf. Theorem \ref{bifurcation:beta}). These approximations were computed with the software AUTO-07P \cite{auto}.

\begin{figure}[h!]
   \centering
 \includegraphics[width=11cm,height=7cm]{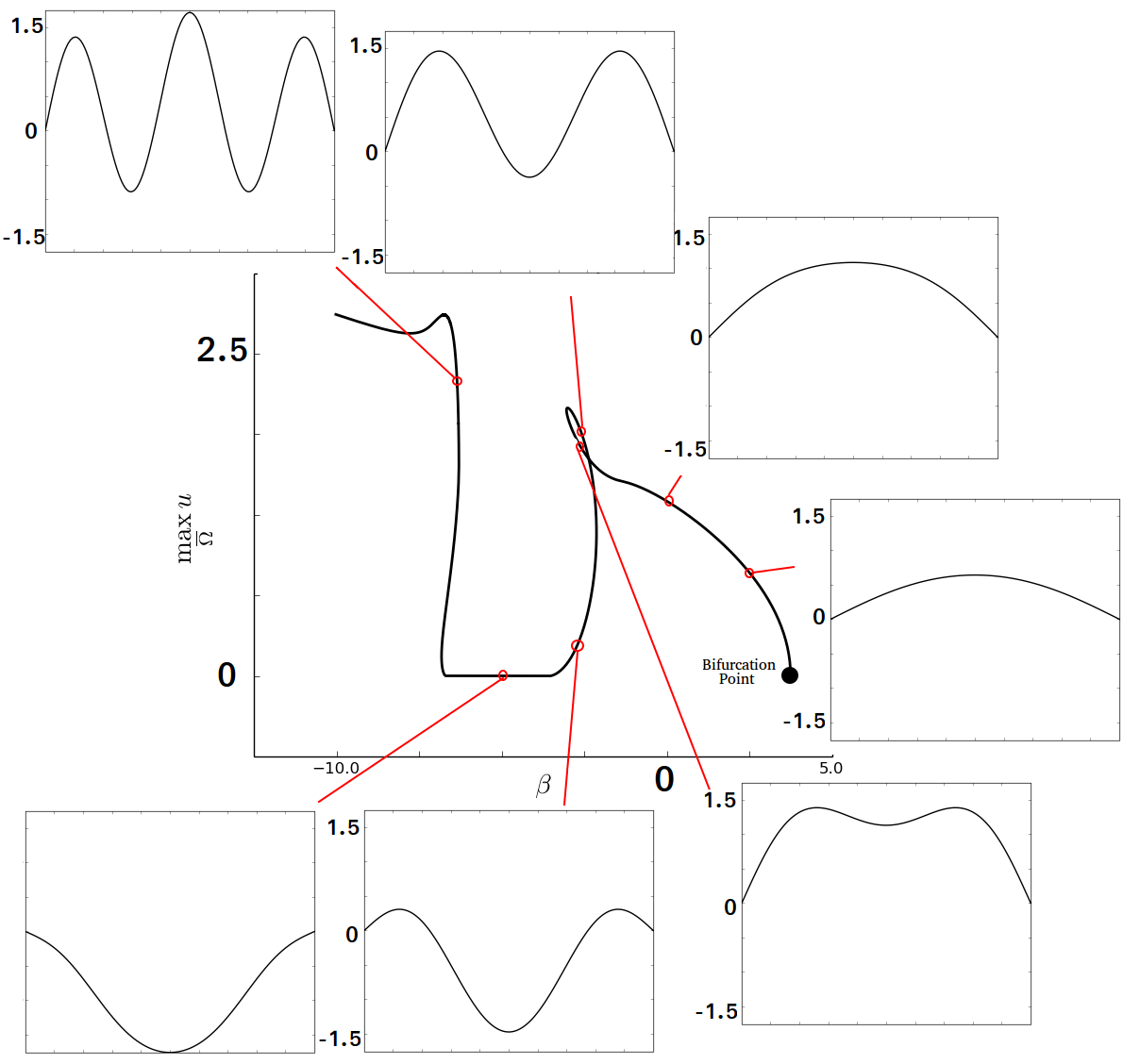}
   \caption{\footnotesize{Here $\Omega=(0,2\pi)$. In this picture we present approximations of solutions of \eqref{efk:eq} along the branch. Note that for $\beta<0$ the positivity is lost and even negative solutions appear. Here the branch does not return to the region $\beta>0$. 
   }}
\end{figure}

\begin{figure}[h!]
   \centering
 \includegraphics[height=4cm]{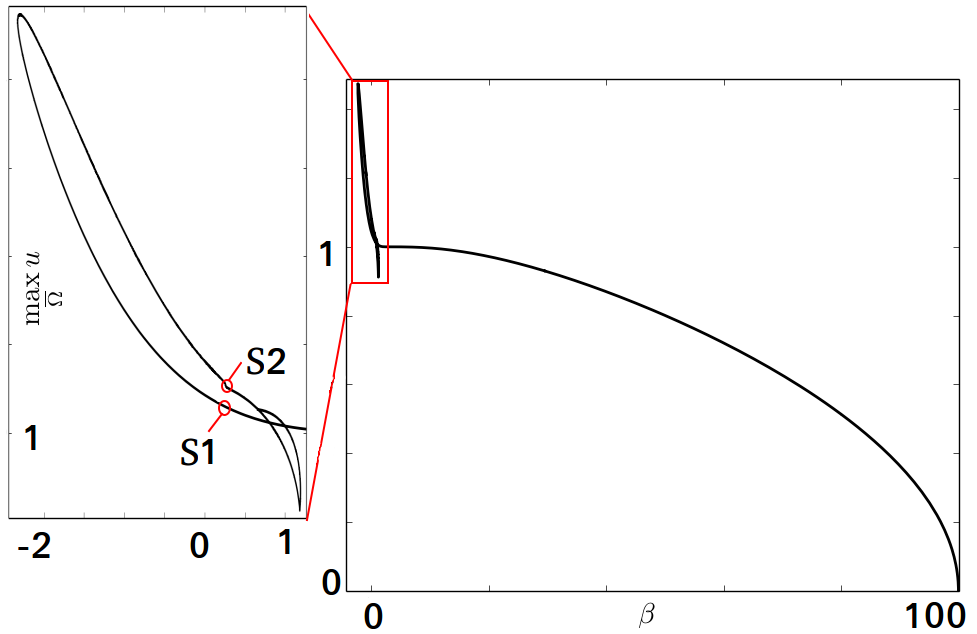} \includegraphics[height=4.9cm]{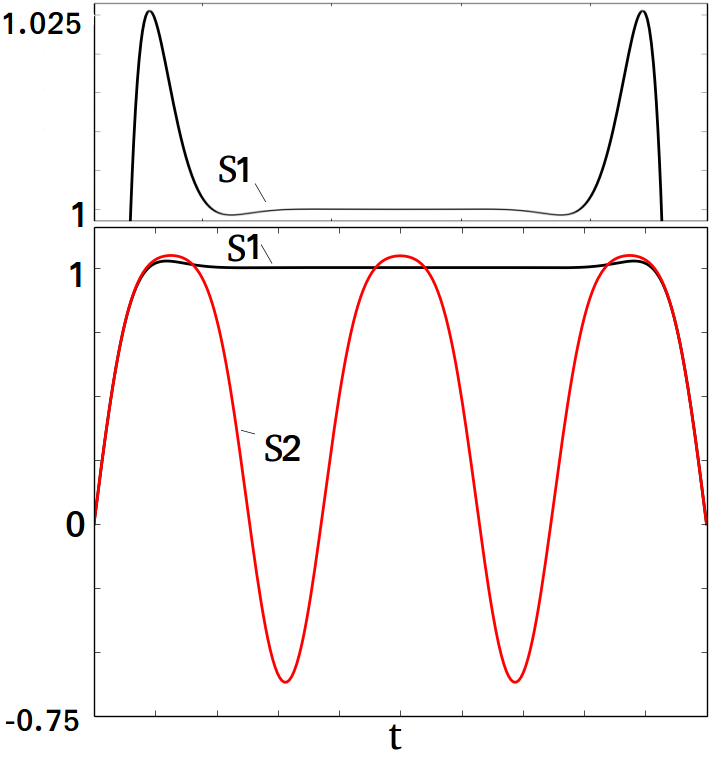}
   \caption{\footnotesize{Here $\Omega = (0, 10\pi)$. In this picture we show numerical evidence that positive solutions are not bounded by one and present oscillations. Also that the bifurcation branch may return to positive values of $\beta$ (although the positivity is lost). On the right we have an approximation of two solutions of \eqref{efk:eq} along the branch corresponding to $\beta=0.35$.}}
 \end{figure}

\newpage

\def\cprime{$'$}

\end{document}